\numberwithin{equation}{section}
\newtheorem{thm}{Theorem}[section]
\newtheorem{lemma}[thm]{Lemma}
\newtheorem{prop}[thm]{Proposition}
\newtheorem{cor}[thm]{Corollary}
{\theorembodyfont{\rmfamily}

\newtheorem{rmk}[thm]{Remark}
}
\newcommand{\hT}{{\widehat T}}
\newcommand{\hP}{{\widehat P}}
\newcommand{\hB}{{\widehat B}}
\newcommand{\hA}{{\widehat A}}
\newcommand{\hE}{{\widehat E}}
\newcommand{\hH}{{\widehat H}}
\newcommand{\hR}{{\widehat R}}
\newcommand{\tA}{{\widetilde A}}
\newcommand{\tB}{{\widetilde B}}
\newcommand{\tH}{{\widetilde H}}
\newcommand{\tL}{{\widetilde L}}
\newcommand{\tM}{{\widetilde M}}
\newcommand{\tQ}{{\widetilde Q}}
\newcommand{\tT}{{\widetilde T}}
\newcommand{\qed}{\hfill \mbox{\raggedright \rule{.07in}{.1in}}}
\newenvironment{proof}{\vspace{1ex}\noindent{\bf
Proof}\hspace{0.5em}}{\hfill\qed\vspace{1ex}}
\newenvironment{pfof}[1]{\vspace{1ex}\noindent{\bf Proof of
#1}\hspace{0.5em}}{\hfill\qed\vspace{1ex}}
\def\R{\mathbb{R}}
\def\Z{\mathbb{Z}}
\def\C{\mathbb{C}}
\def\T{\mathbb{T}}
\def\cB{\mathcal{B}}
\def\cH{\mathcal{H}}
\newcommand{\eps}{{\epsilon}}
\newcommand{\suma}{{\textstyle{\sum_a}}}
\newcommand{\spec}{\operatorname{spec}}
\newcommand{\supp}{\operatorname{supp}}
\title{Local large deviations for periodic infinite horizon Lorentz gases}
\author{
Ian Melbourne \thanks{Mathematics Institute, University of Warwick, Coventry, CV4 7AL, UK}
\and
{Fran\c{c}oise P\`ene}
\thanks{Univ Brest, Universit\'e de Brest, LMBA,
UMR CNRS 6205, 
6 avenue Le Gorgeu, 29238 Brest cedex, France}
\and
Dalia Terhesiu
\thanks{Mathematisch Instituut,
University of Leiden, Niels Bohrweg 1, 2333 CA Leiden, Netherlands}
}
\date{29 August 2021; updated 21 May 2022}
\begin{document}

 \maketitle

\begin{abstract}
We prove local large deviations for the periodic infinite horizon Lorentz gas viewed as a $\Z^d$-cover ($d=1,2$) of a dispersing billiard. In addition to this specific example, we prove a general result for a class of nonuniformly hyperbolic dynamical systems and observables associated with central limit theorems with nonstandard normalisation.
\end{abstract}

\section{Introduction}
\label{sec:intro}

Local large deviations (LLD) for one dimensional i.i.d.\ random variables that do not satisfy the classical central limit theorem (with the standard normalisation) but are in the domain of a stable law were recently obtained by Caravenna \& Doney~\cite[Theorem~1.1]{CaravennaDoney} and refined by Berger~\cite[Theorem 2.3]{Berger19}. Such results have been extended to  multivariate i.i.d.\ random variables in the domain of the stable laws by Berger in~\cite{Berger19b}. Roughly speaking, an LLD measures the probability that the sum of the random variables assumes precise, but asymptotically large values. In the absence of second and even first moments, the proofs are considerably harder.

For dynamical systems, the first LLD results in the absence of the classical central limit theorem were obtained in~\cite{MTsub}; they are as optimal as~\cite[Theorem 2.3]{Berger19}. The main shift in that paper is an analytic proof which overcomes the restriction of having independence.
Although promising, the results in~\cite{MTsub} are limited to the Gibbs Markov maps.
The aim of this paper is to prove an LLD estimate for
\emph{infinite} horizon periodic Lorentz maps, which were shown to satisfy a central limit theorem
with nonstandard normalisation by Sz{\'a}sz \& Varj{\'u}~\cite{SV07}. A crucial new ingredient of the proofs of the present LLD results consists of a new operator renewal technique on the Young tower for the billiard map.

Periodic dispersing billiards and Lorentz gases were introduced into ergodic theory and studied by~\cite{Sinai70}. For a general reference, see~\cite{ChernovMarkarian}.
We recall that the classical central limit theorem was proved in the finite horizon case by~\cite{BunimovichSinaiChernov91} and local, moderate and large deviations were recently
obtained in Dolgopyat \& N\'andori~\cite{DP17}. In the same work~\cite{DP17} the authors designed a strategy to prove the local limit theorem and mixing properties for group extensions (such as $\Z^d$) of
probability preserving flows by free flight functions with finite second moments. For a similar strategy but weaker results we refer to~\cite{AT20}.
The strategy in~\cite{DP17} consists of the systematic use of local large and moderate deviations
for the underlying probability preserving Poincar\'e map.
Their result applies to the \emph{finite} horizon Lorentz flow.
In that case, both the free flight and the roof function are bounded. The LLD obtained in this paper (see Theorem~\ref{thm:lld} below) is close to optimal (see Remark~\ref{rmk:opt} below)
and we believe that it can be used to prove the local limit theorem and mixing properties for
the \emph{infinite} horizon Lorentz flow.

An infinite horizon periodic Lorentz map $(\tT,\tM,\tilde\mu)$
is a $\Z^d$-cover of 
an infinite horizon periodic dispersing billiard $(T,M,\mu)$ where $\tilde\mu$ and $\mu$ are the Liouville measures on $\tM$ and $M$. 
The notation for the infinite horizon dispersing billiard
  is recalled in Section~\ref{sec:setup}. 
We consider the cases $d=1$ (tubular billiard) and $d=2$ (planar billiard). 

Let $\kappa:M\to\Z^d$ denote 
the free flight function between collisions,
and define $\kappa_n=\sum_{j=0}^{n-1}\kappa\circ T^j$.
For the Lorentz gas, geometrically $\kappa_n\in\Z^d$ denotes the cell in the infinite measure phase space $\tM$ where the $n$'th collision takes place for initial conditions starting in the $0$'th cell.
Set
\[
a_n=\sqrt{n\log n}.
\]
The central limit theorem with nonstandard normalisation proved in~\cite{SV07} says that $a_n^{-1}\kappa_n$ converges in distribution to a $d$-dimensional normal distribution. In fact,~\cite{SV07} proves a stronger result, namely the corresponding local limit theorem.
Our main result is:\footnote{We set $\log x=1$ for $x\in(0,2)$.}

\begin{thm}[LLD for the dispersing billiard]
\label{thm:lld} 
There exists $C>0$ such that
for all  $n\ge1$, $N\in\Z^d$,
\[
\mu(\kappa_n=N)\le \begin{cases}
 C \dfrac{n}{a_n} \, \dfrac{\log |N|}{1+|N|^2}, & d=1, \\[2.75ex]
C \dfrac{n}{a_n^2} \, \dfrac{\log |N|\log\log|N|}{1+|N|^2}, & d=2.
\end{cases}
\]
\end{thm}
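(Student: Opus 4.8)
The key structural fact is that the free flight function $\kappa$ has heavy tails: for the infinite horizon Lorentz gas, $\mu(|\kappa| > t) \sim c/t^2$ (the corridors produce this tail), which is exactly the borderline case that gives the $\sqrt{n\log n}$ normalisation rather than $\sqrt n$. So the strategy should be: (i) lift everything to the Young tower $\Delta$ over the billiard map, exploiting the fact that on the tower one has good spectral/renewal estimates for the transfer operator; (ii) use the Fourier/Nagaev–Guivarc'h perturbative framework, writing $\mu(\kappa_n = N) = (2\pi)^{-d}\int_{\T^d} e^{-i N\cdot t}\,\widehat{L^n}(t)\,1\, d\mu\, dt$ where $\widehat{L}(t)$ is the transfer operator twisted by $e^{i t\cdot\kappa}$; (iii) control $\widehat{L}(t)$ near $t=0$ using the asymptotic expansion of the leading eigenvalue $\lambda(t) = 1 - |t|^2 \phi(|t|)(1+o(1))$ where $\phi(s) \sim c\log(1/s)$ reflects the logarithmic correction coming from the barely-infinite variance; and (iv) bound the contribution away from $t=0$ using the aperiodicity of $\kappa$ together with the uniform spectral gap for $\widehat L(t)$ with $t$ bounded away from $0$. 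This is precisely the place where the "new operator renewal technique on the Young tower" advertised in the introduction enters: the naive bound loses a logarithm, and one needs the renewal-operator decomposition to recover the sharp constants matching Berger's i.i.d.\ result.

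Concretely, the plan is: First I would reduce to the induced system / Young tower, and record the tail asymptotics $\mu(\kappa = N) \asymp |N|^{-d-1}$ (so $\mu(|\kappa|>t)\asymp t^{-2}$ in both $d=1,2$) together with the characteristic function estimate $\widehat\mu_\kappa(t) = 1 - |t|^2 \ell(|t|) (1+o(1))$ as $t\to 0$, where $\ell(s)$ is slowly varying with $\ell(s)\asymp \log(1/s)$; this is the analytic input from the billiard literature (Sz\'asz--Varj\'u, and the finer estimates) and from the general theorem in the body of the paper. Second, I would split the Fourier integral over $\T^d$ into a "central" region $|t| \le \delta_n$ and a "tail" region $|t| > \delta_n$ with $\delta_n$ chosen $\asymp a_n^{-1}(\log n)^{\beta}$ for an appropriate exponent. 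On the tail region, the uniform contraction $\|\widehat{L^n}(t)\| \le C\rho^n$ (from the spectral gap, valid because $\kappa$ is not cohomologous to a lattice-valued function plus constant) kills that part, except one must be careful near $t=0$ within the tail where the eigenvalue $\lambda(t)^n$ is not yet small — there one estimates $|\lambda(t)|^n \le \exp(-c n |t|^2 \log(1/|t|))$ and integrates, which produces the $n/a_n$ (resp.\ $n/a_n^2$) prefactor together with the $\log|N|$ (resp.\ $\log|N|\log\log|N|$) factor after inserting $|t|\asymp 1/|N|$ as the effective scale. Third, on the central region, I would expand $\widehat{L^n}(t) = \lambda(t)^n \Pi_t + (\text{remainder})$, substitute the eigenvalue expansion, and perform the oscillatory integral $\int e^{-iN\cdot t} \exp(-n|t|^2 \ell(|t|))\,dt$ by a Laplace/saddle-point analysis; the stationary-phase competition between the oscillation at frequency $|N|$ and the Gaussian-type decay at scale $a_n/\sqrt n \cdot(\cdots)$ is what yields the stated bound, with the slowly varying $\ell$ responsible for the $\log|N|$ and $\log\log|N|$ corrections.

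The main obstacle — and the technical heart of the paper — is getting the constant and the logarithmic factors sharp in the operator setting, i.e.\ replicating Berger's optimal i.i.d.\ LLD without independence. Two specific difficulties: (a) the remainder term in the spectral decomposition of $\widehat{L^n}(t)$ on the tower must be controlled uniformly in $t$ with a bound that is summable against $e^{-iN\cdot t}$ and does not itself reintroduce an extra logarithm — this is where one needs the operator renewal-sequence machinery rather than a crude perturbation estimate, since the return-time function to the base of the tower interacts with $\kappa$; and (b) the estimate on $\ell(s)$ (the slowly varying correction) must be two-sided and quantitative enough — knowing $\ell(s) \sim c\log(1/s)$ with explicit error — so that the $d=2$ case produces exactly $\log|N|\log\log|N|$ and not merely $(\log|N|)^2$ or $\log|N|$. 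I expect the proof to handle $d=1$ and $d=2$ in parallel, with the $d=2$ computation of the oscillatory integral being the more delicate one because of the extra iterated logarithm. A secondary subtlety is the treatment of $|t|$ very small (below the scale where $\log(1/|t|)$ should be replaced by the constant $1$ per the footnote convention), which requires matching the "true" slowly varying function to its truncated version without loss.
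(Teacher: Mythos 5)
Your proposal captures the right general framework (lift to the Young tower, twisted transfer operators, eigenvalue asymptotics $1-\lambda_{0,t}\asymp |t|^2\log(1/|t|)$, operator renewal machinery), but it has two genuine gaps. First, the mechanism you give for producing the factor $|N|^{-2}$ is not a proof. Absolute-value bounds such as $|\lambda_{0,t}|^n\le\exp(-cn|t|^2\log(1/|t|))$ integrate to $O(a_n^{-d})$ with no decay in $N$ whatsoever, and there is no stationary-phase point or ``effective scale $|t|\asymp 1/|N|$'' that yields $|N|^{-2}$: the decay in $N$ can only come from smoothness of $t\mapsto \lambda_{k,t}^n\Pi_{k,t}$ via integration by parts. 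The obstruction is precisely that $\kappa$ has infinite second moments, so the eigenvalue and the spectral projection are not $C^2$ in $t$, and a double integration by parts is unavailable. The paper's solution, and its main technical content, is to prove $C^1$ regularity of $\lambda_{k,t}$ \emph{and} of $\Pi_{k,t}:\cB\to L^1$ on a suitable new Banach space, together with a quantitative modulus of continuity $M_b(t,h)$ for the first derivatives (Lemma~\ref{lem:key}, proved in Section~\ref{sec:pfkey} via estimates on $\partial_j\partial_z\hR$ etc., which use $\mu_Y(\psi>n)=O(n^{-2})$); one then performs a single integration by parts followed by a translation trick $t\mapsto t-\pi N_j^{-1}e_j$ (Lemma~\ref{lem:PiLLD}, Proposition~\ref{prop:worst}) to gain the second factor of $|N|^{-1}$ at the cost of $\log|N|$. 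Your item (a) gestures at the renewal machinery, but without an explicit substitute for the missing second derivative — in particular without any regularity statement for the projections, not just the eigenvalue — the central step of your plan does not go through. Note also that the paper deliberately avoids full-torus Fourier inversion and aperiodicity: it uses a positive kernel $r$ supported near $t=0$ with $1_{\{\kappa_n=N\}}\le\int e^{-it\cdot N}r(t)e^{it\cdot\kappa_n}\,dt$, which sidesteps the peripheral spectrum issues ($q$ roots of unity) on the tower.

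Second, the spectral/Fourier argument cannot cover all $N$: the off-leading remainder contributes $O(\gamma^n)$ uniformly in $N$, which exceeds the target bound $na_n^{-d}\log|N|/|N|^2$ once $|N|$ is larger than $e^{\eps_1 n}$. The paper therefore treats the regime $n\ll\log|N|$ separately (Section~\ref{sec:bigN}, Lemma~\ref{lem:bigN}) by a purely probabilistic argument at the level of $(T,M,\mu)$, using the billiard-specific estimate of~\cite[Lemma~16]{SV07} on the joint occurrence of two long free flights within $O(\log p)$ collisions. Your proposal has no mechanism for this range. Relatedly, you misattribute the $d=2$ factor $\log|N|\log\log|N|$ to the slowly varying correction in the oscillatory integral; in fact, in the Fourier regime the bound is $\log|N|$ for both $d=1,2$ (Corollary~\ref{cor:lld}), and the extra $\log\log|N|$ arises only in the large-$N$ regime from comparing $a_n^2=n\log n$ with $\log|N|\log\log|N|$ when $n\le\omega\log|N|$ (see Remark~\ref{rmk:opt}(b)). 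So the delicate two-sided control of $\ell(s)$ you flag as difficulty (b) is not where the iterated logarithm comes from, and sharpening it would not close the actual gaps.
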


\noindent Again, there is the geometric interpretation that $\mu(\kappa_n=N)$ represents the probability 
that an initial condition in the $0$'th cell of $\tM$ lies in the $N$'th cell after $n$ collisions.

\begin{rmk}
\label{rmk:opt}
 (a) 
A slightly stronger version of Theorem~\ref{thm:lld} for unrestricted $d$ with bound
$\frac{n}{a_n^d} \, \frac{\log |N|}{1+|N|^2}$ 
for all $n$ and $N$
 was proved in the much easier set up of Gibbs-Markov maps (and also for i.i.d.\ random variables) in~\cite[Corollary 3.3]{MTsub}.
 \\[.75ex]
(b) It follows from the arguments in this paper that the extra factor of $\log\log |N|$ for $d=2$ in Theorem~\ref{thm:lld} can be removed in the range $|N|\le e^{\eps_1 n}$, for $\eps_1$ sufficiently small as chosen in Section~\ref{sec:pfkey}. The same holds true in the range $n\ll \log N/\log\log N$ (see the end of the proof of Lemma~\ref{lem:bigN}).
 \\[.75ex]
(c)
For the dispersing billiard, the improved bound in~(a)
 follows from a uniform version~\cite{PT20} of the local limit theorem~\cite{SV07} in the range
 $N\ll \sqrt{n\log n}$.
 Hence, the principal novelty of Theorem~\ref{thm:lld} lies in the range $N\gg\sqrt{n\log n}$.  
 We note that, as in~\cite{MTsub}, the approach in this paper does not rely on the local limit theorem and extends to situations where the local limit theorem fails, see Theorem~\ref{thm:abs}.
\end{rmk}

The approach in this paper, following~\cite{MTsub}, is Fourier analytic and relies on smoothness properties of the leading eigenvalues and their spectral projections for the appropriate transfer operator. 
We show how to obtain $C^r$ control for all $r<2$,
 going considerably beyond previous estimates of~\cite{BalintGouezel06,PT20}. 
The methods developed in Section~\ref{sec:pfkey} to obtain this control 
in the context of exponential Young towers
are the main technical advance of this paper and should have other applications, not only to LLD.

In Section~\ref{sec:setup}, we recall the setting for dispersing billiards.
In Section~\ref{sec:bigN}, we prove Theorem~\ref{thm:lld}
in the range $n\ll\log|N|$.
Sections~\ref{sec:key} to~\ref{sec:lldpf} 
treat the complementary range $\log|N|\le\eps_1 n$ where $\eps_1$ is chosen sufficiently small.  
Key technical estimates are stated in Section~\ref{sec:key} and proved in Section~\ref{sec:pfkey}.
In Section~\ref{sec:lldpf}, we complete the proof of Theorem~\ref{thm:lld}.
In Section~\ref{sec:abs}, we state and prove an abstract version, Theorem~\ref{thm:abs}, of our main result, giving an LLD for a general class of nonuniformly hyperbolic systems modelled by Young towers with exponential tails.

\vspace{-2ex}
\paragraph{Notation}
We use ``big O'' and $\ll$ notation interchangeably, writing $b_n=O(c_n)$ or $b_n\ll c_n$
if there are constants $C>0$, $n_0\ge1$ such that
$b_n\le Cc_n$ for all $n\ge n_0$.
As usual, $b_n=o(c_n)$ means that $\lim_{n\to\infty}b_n/c_n=0$
and $b_n\sim c_n$ means that $\lim_{n\to\infty}b_n/c_n=1$.

We write $B_r(x)$ to denote the open ball in $\R^d$ and $\C$ of radius $r$ centred at $x$.

\section{Set up}
\label{sec:setup}

Recall that  the Lorentz map $(\tT,\tM,\tilde\mu)$ is a $\Z^d$-cover (here $d=1,2$) of the dispersing billiard $(T,M,\mu)$ 
by the free flight function $\kappa:M\to\Z^d$.
The measures $\tilde\mu$ and $\mu$ are $\tT$-invariant (respectively $T$-invariant) and are the Liouville measures on $\tM$ and $M$, normalised so that $\mu$ is a probability measure. 

The dispersing billiard $(T,M,\mu)$ is the collision map on the billiard domain $Q=\T^2\setminus\Omega$ where $\T^2=\R^2/\Z^2$ and $\Omega$ is a finite union of convex obstacles with $C^3$ boundaries and nonvanishing curvature.
The two-dimensional phase space (position in $\partial\Omega$ and unit velocity) is given by $M=\partial\Omega\times (-\pi/2,\pi/2)$.

Let $\tQ$ denote the lifted domain inside $\tM$.
Then $Q =\tQ/\Z^2\subset \T^2$ if $d=2$ and $Q
=\tQ/(\Z\times\{0\})\subset \T^2$ if $d=1$.

An important part of the proof of Theorem~\ref{thm:lld} is that $(T,M,\mu)$ is modelled by
a two-sided Young tower $(f,\Delta,\mu_\Delta)$ with exponential tails~\cite{Chernov99,Young98}.
We briefly recall the notion of Young tower.\footnote{We suppress many standard details about Young towers, mentioning only those aspects required for this paper. For instance, we suppress the fact that the projection $\bar\pi:\Delta\to\bar\Delta$ corresponds in practice to collapsing stable leaves.}

Let $(Y,\mu_Y)$ be a probability space with an at most countable measurable partition~$\alpha$, and let $F:Y\to Y$ be an ergodic measure-preserving transformation.
For $\theta\in(0,1)$, define the {\em separation time} $s(y,y')$ to be the least integer $n\ge0$ such that $F^ny$ and $F^ny'$ lie in distinct partition elements in $\alpha$.
It is assumed that the partition~$\alpha$ separates trajectories, so $s(y,y')=\infty$ if and only if $y=y'$;  then $d_\theta(y,y')=\theta^{s(y,y')}$ is a metric.
We say that $F$ is a {\em (full-branch) Gibbs-Markov map} if
\begin{itemize}

\parskip=-2pt
\item $F|_a:a\to Y$ is a measurable bijection for each $a\in\alpha$, and
\item
There are constants $C>0$, $\theta\in(0,1)$ such that
$|\log\xi(y)-\log\xi(y')|\le Cd_\theta(y,y')$ for all $y,y'\in a$, $a\in\alpha$, where
$\xi=\frac{d\mu_Y}{d\mu_Y\circ F}:Y\to\R$.
\end{itemize}

Let $F:Y\to Y$ be a Gibbs-Markov map and let
$\sigma:Y\to\Z^+$ be constant on partition elements such that 
$\mu_Y(\sigma>n)=O(e^{-an})$ for some $a>0$,
We define the {\em one-sided Young tower with exponential tails} $\bar\Delta=Y^\sigma$ and {\em tower map} $\bar f:\bar\Delta\to\bar\Delta$ as follows:
\[
\bar\Delta=\{(y,\ell)\in Y\times\Z: 0\le \ell\le \sigma(y)-1\},
\quad
\bar f(y,\ell)=\begin{cases}
(y,\ell+1) & \ell\le\sigma(y)-2 \\
(Fy,0) & \ell=\sigma(y)-1
\end{cases}.
\]
Let $\bar\sigma=\int_Y\sigma\,d\mu_Y$.
Then $\bar\mu_\Delta=(\mu_Y\times{\rm counting})/\bar\sigma$ is an ergodic $\bar f$-invariant probability measure on $\bar\Delta$.

We say that $(T,M,\mu)$ is modelled by a Young tower $(f,\Delta,\mu_\Delta)$ with exponential tails if there exist a one-sided Young tower
$(\bar f,\bar\Delta,\bar\mu_\Delta)$ and measure-preserving semiconjugacies 
\[
\pi:\Delta\to M, \qquad \bar\pi:\Delta\to\bar\Delta.
\]

Next, we recall some properties proved in~\cite{SV07} of
the free-flight function
$\kappa:M\to\Z^d$ between collisions.
First, there is a constant $C>0$ such that
$\mu(|\kappa|>n)\sim Cn^{-2}$.
Second, $\kappa$
 lifts to a function
$\hat\kappa=\kappa\circ\pi:\Delta\to\Z^d$ that is constant on 
$\bar\pi^{-1}(a\times\{\ell\})$ for each $a\in\alpha$, $\ell\in\{0,\dots,\sigma(a)\}$.  Hence $\hat\kappa$ projects to an observable $\bar\kappa:\bar\Delta\to\Z^d$ constant on the partition elements $a\times\{\ell\}$ of $\bar\Delta$.
In particular, $\bar\mu_\Delta(|\bar\kappa|>n)=\mu(|\kappa|>n)
\sim C n^{-2}$.

Define 
\[
\textstyle \psi:Y\to\R, \qquad \psi(y)=\sum_{\ell=0}^{\sigma-1}|\hat\kappa(y,\ell)|.
\]

\begin{prop}  \label{prop:psi}
There exists $C>0$ such that
\[
\mu_Y(\psi>n)\le C n^{-2}
\quad\text{for all $n\ge 1$.}
\]
In particular,
$\psi\in L^r(Y)$ for all $r<2$.
\end{prop}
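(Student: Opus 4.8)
The plan is to prove the tail estimate by the heavy-tail ``single big jump'' principle: the return time $\sigma$ over which $\psi$ is summed has exponentially small tails, so the column sum $\psi$ should have the same tail order $n^{-2}$ as a single free flight $|\hat\kappa|$, the dominant scenario being that exactly one collision in the column produces a long flight. It suffices to prove the bound for $n$ large, since $\mu_Y(\psi>n)\le 1$ always. First I would truncate at a level $K=K(n)$ of order $n/\log n$ (with a large implied constant to be fixed in terms of $a$), and split $\psi=\psi'+\psi''$, where $\psi'(y)=\sum_{\ell<\sigma(y)}|\hat\kappa(y,\ell)|\,\mathbf{1}_{\{|\hat\kappa(y,\ell)|\le K\}}$ collects the truncated flights and $\psi''=\psi-\psi'$ the long ones. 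Since $\{\psi>n\}\subseteq\{\psi'>n/2\}\cup\{\psi''>n/2\}$ and $\psi'\le K\sigma$, the first event is contained in $\{\sigma>n/(2K)\}$; as $n/(2K)$ is a large multiple of $\log n$, the exponential tail $\mu_Y(\sigma>m)=O(e^{-am})$ makes $\mu_Y(\psi'>n/2)=O(n^{-2})$.

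For $\psi''$, note that $\psi''(y)>0$ exactly when the tower column $\{(y,\ell):0\le\ell<\sigma(y)\}$ meets $E_K:=\{|\bar\kappa|>K\}\subset\bar\Delta$, and that if the column meets $E_K$ only once then $\psi''$ equals that single entry. Hence $\{\psi''>n/2\}$ lies in the union of (i) $\{\exists\,\ell<\sigma:|\hat\kappa(\cdot,\ell)|>n/2\}$ and (ii) $\{$the column over $y$ meets $E_K$ at least twice$\}$. Since the tower levels $\{(y,\ell):\sigma(y)>\ell\}$ partition $\bar\Delta$ with the correct weights, event (i) has measure $\sum_{\ell\ge0}\mu_Y(\{|\hat\kappa(\cdot,\ell)|>n/2\}\cap\{\sigma>\ell\})=\bar\sigma\,\bar\mu_\Delta(|\bar\kappa|>n/2)=O(n^{-2})$. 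For event (ii), a union bound over the pair of collisions followed by the same level-partition identity, applied with the gap $j\ge1$ between them and the remaining column height $r$ (for which $\bar\mu_\Delta(r>j)=\tfrac1{\bar\sigma}\sum_{\ell\ge0}\mu_Y(\sigma>\ell+j)=O(e^{-aj})$), gives
\[
\mu_Y(\mathrm{(ii)})\ \le\ \bar\sigma\sum_{j\ge1}\bar\mu_\Delta\bigl(E_K\cap\bar f^{-j}E_K\cap\{r>j\}\bigr)\ \le\ \bar\sigma\sum_{j\ge1}\min\Bigl\{\mu\bigl(\{|\kappa|>K\}\cap T^{-j}\{|\kappa|>K\}\bigr),\ \bar\mu_\Delta(r>j)\Bigr\},
\]
using the semiconjugacies $\pi:\Delta\to M$ and $\bar\pi:\Delta\to\bar\Delta$ to identify $\bar\mu_\Delta(E_K\cap\bar f^{-j}E_K)=\mu(\{|\kappa|>K\}\cap T^{-j}\{|\kappa|>K\})$.

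The step I expect to be the main obstacle is controlling the overlap $\mu(\{|\kappa|>K\}\cap T^{-j}\{|\kappa|>K\})$. Soft ergodic arguments do not suffice: exponential decay of correlations on the Young tower (which applies here since $\mathbf{1}_{E_K}$ is constant on cylinders, with Lipschitz norm bounded uniformly in $K$) gives only the product estimate $O(K^{-4})$ once $j\gtrsim\log K$, while over the remaining range $1\le j\lesssim\log n$ the crude bound $O(K^{-2})$ summed over $\sim\log n$ values of $j$ loses a power of $\log n$ (recall $K\asymp n/\log n$). What is genuinely needed is a geometric estimate for the infinite horizon billiard to the effect that two long free flights a bounded --- or even $O(\log n)$ --- number of collisions apart are nearly independent, say $\mu(\{|\kappa|>K\}\cap T^{-j}\{|\kappa|>K\})\le CK^{-2-\delta}$ for some fixed $\delta>0$, uniformly in $j\ge1$ (up to polylogarithmic factors); this should follow from the finiteness of the collection of corridors and the fact that the phase points making a flight of length $\sim K$ through a given corridor form thin strips meeting transversally under iteration, along the lines of~\cite{SV07,ChernovMarkarian}. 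Granting such a bound and $\bar\mu_\Delta(r>j)=O(e^{-aj})$, the series above is $O((\log K)\,K^{-2-\delta})=O((\log n)^{3+\delta}n^{-2-\delta})=o(n^{-2})$ since $\delta>0$. Adding the contributions of $\psi'$, of (i) and of (ii) yields $\mu_Y(\psi>n)\le Cn^{-2}$ for all large $n$, hence for all $n\ge1$ after enlarging $C$; the final assertion $\psi\in L^r(Y)$ for $r<2$ is then immediate from a tail bound of order $n^{-2}$.
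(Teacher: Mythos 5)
Your decomposition and tower bookkeeping are correct: the split $\psi=\psi'+\psi''$ at a truncation level $K\asymp n/\log n$, the containment of $\{\psi''>n/2\}$ in ``one flight exceeds $n/2$'' union ``the column meets $\{|\bar\kappa|>K\}$ twice'', the level-partition identity giving $\mu_Y(\mathrm{(i)})\le\bar\sigma\,\bar\mu_\Delta(|\bar\kappa|>n/2)=O(n^{-2})$, and the restriction $\{r>j\}$ ensuring the two hits lie in the same column so that the semiconjugacy converts the event into $\mu(\{|\kappa|>K\}\cap T^{-j}\{|\kappa|>K\})$ — all of this is sound, and it is essentially the route behind the result the paper invokes. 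Note, however, that the paper does not reprove this proposition at all: its proof is a citation to \cite{SV07} and \cite[Section~2, Lemma~5.1]{ChernovZhang08}. The single step you leave unproven — $\mu(|\kappa|>K,\,|\kappa|\circ T^j>K)\le CK^{-2-\delta}$ uniformly over $1\le j\lesssim\log K$ — is exactly the content of \cite[Lemma~16]{SV07} (see also \cite[Lemma~5.1]{ChernovZhang08}); this paper quotes it again in the proof of Lemma~\ref{lem:bigN} in the form $\mu(|\kappa|=p,\,|\kappa|\circ T^r\ge cp^{4/5})\le Cp^{-2/45}\mu(|\kappa|=p)$, and summing that bound over $p>K$ (treating separately the negligible range $p\gg K^{5/4}$, where $\sum_p\mu(|\kappa|=p)\ll K^{-5/2}$) yields precisely the estimate you require. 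So, modulo citing that known geometric lemma rather than asserting that it ``should follow'' from the corridor structure, your proof is complete; as written, that heuristic is the only genuine gap, and your diagnosis that it cannot be closed by decay of correlations alone is accurate — Proposition~\ref{prop:psiabs} shows that the soft argument (exponential tails of $\sigma$ plus the tail of $\kappa$, with no billiard geometry) only gives $n^{-2}(\log n)^2\ell_1(n/\log n)$, i.e.\ loses exactly the logarithmic factors you were worried about. One small slip: the constant in $K\asymp n/\log n$ must be \emph{small} (equivalently, $n/(2K)$ a \emph{large} multiple of $\log n$, so that $\mu_Y(\sigma>n/(2K))\ll e^{-a\,n/(2K)}\le n^{-2}$); you wrote ``large implied constant'', which points the wrong way, but this is trivially repaired and does not affect the rest of the estimates.
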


\begin{proof}
This is proved in~\cite{SV07}.
The main step~\cite[Lemma~16]{SV07} uses the
bound $\mu(|\kappa|>n)=O(n^{-2})$ together with the structure of infinite horizon dispersing billiards
(see also~\cite[Lemma~5.1]{ChernovZhang08}). 
The bound for $\mu_Y(\psi>n)$ then follows (see for instance~\cite[Section~2]{ChernovZhang08})).~
\end{proof}

We end this subsection by recalling some results about transfer operators and perturbed transfer operators on the one-sided tower.
Let $P:L^1(\bar\Delta)\to L^1(\bar\Delta)$ be the transfer operator for $(\bar f,\bar\Delta,\bar\mu_\Delta)$, so $\int_{\bar\Delta} Pv\,w\,d\bar\mu_\Delta=\int_{\bar\Delta} v\,w\circ \bar f\,d\bar\mu_\Delta$ for all $v\in L^1$, $w\in L^\infty$. 
By~\cite[Section~3.3]{BalintGouezel06}, there is a Banach space $\cB'\subset L^1$ (called $\cH$ in~\cite{BalintGouezel06}) such that
$P:\cB'\to\cB'$ is quasicompact. (The definition of $\cB'$ is not used in this paper.) 
In particular, the intersection of the spectrum of $P:\cB'\to\cB'$ with the unit circle consists of finitely many eigenvalues 
$\lambda_0,\dots,\lambda_{q-1}$ of finite multiplicity
and these are the $q$'th roots of unity $\lambda_k=e^{2\pi ik/q}$.  
By ergodicity, these eigenvalues are simple.

We consider the perturbed family of transfer operators
\[
P_t:L^1(\bar\Delta)\to L^1(\bar\Delta), \quad
P_tv=P(e ^{i t\cdot \bar\kappa}v),\;t\in\R^d,
\]
 where $\,\cdot\,$ denotes the standard scalar product on $\R^d$. 
Applying results of~\cite{KellerLiverani99}, it is shown in~\cite[Section~3.3.2]{BalintGouezel06} that there exists $\delta>0$ so that
$t\mapsto P_t:\cB'\to L^3$ is continuous for $t\in B_\delta(0)$.
Moreover, there are continuous families of simple isolated eigenvalues $t\mapsto \lambda_{k,t}$ for $P_t:\cB'\to\cB'$ with $\lambda_{k,0}=\lambda_k$ and $|\lambda_{k,t}|\le1$.
Let $t\mapsto \Pi_{k,t}$ denote the corresponding spectral projections on $\cB'$.  Then
\begin{equation}
\label{eq:Sz}
P_t^n=\sum_{k=0}^{q-1} \lambda_{k,t}^n\Pi_{k,t}+Q_t^n,
\end{equation}
where $Q_t=P_t\big(I-\Pi_{0,t}-\dots-\Pi_{q-1,t}\big)$.
By~\cite[Corollary~2]{KellerLiverani99}, there exist $C>0$ and $\gamma\in(0,1)$ such that
\begin{equation}
\label{eq:Sz-bis}
\sup_{t\in B_\delta(0)}\| Q_t^n\|_{\mathcal B'}\le C\gamma^n.
\end{equation}

By~\cite{SV07},
\begin{equation} \label{eq:SV}
1-\lambda_{0,t}\sim \Sigma t\cdot t\, \log (1/|t|) 
\quad\text{as $t\to0$},
\end{equation}
where $\Sigma\in\R^{d\times d}$ is a positive-definite  matrix.

\section{The range $n\ll \log |N|$.}
\label{sec:bigN}

In this section, we prove Theorem~\ref{thm:lld} in the range $n\ll\log|N|$. This estimate holds at the level of $T:M\to M$ and $\kappa:M\to\Z^d$ (without requiring consideration of Young towers).
Recall that $d\in\{1,2\}$.

\begin{lemma} \label{lem:bigN}
Let $\omega>0$.  There exists $C>0$ such that
\[
\mu(\kappa_n=N)\le 
C\frac{n}{a_n^d}\frac{(\log|N|\log\log|N|)^{d/2}}{|N|^2}
\quad\text{for all $n\ge1$, $N\in\Z^d$ with $n\le \omega\log|N|$.}
\]
\end{lemma}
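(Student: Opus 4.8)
The plan is to use the ``one big jump'' principle. In the range $1\le n\le\omega\log|N|$ the target $|N|$ is super-exponentially large relative to $n$, so on $\{\kappa_n=N\}$ at least one summand $\kappa\circ T^j$ must already carry essentially all of $N$, and I would quantify exactly this. First I would dispose of degenerate cases: if $n$ or $|N|$ is bounded then the asserted right-hand side is bounded below by a positive multiple of, respectively, $|N|^{-2}$ or a positive constant, and the crude bound $\mu(\kappa_n=N)\le n\,\mu(|\kappa|\ge|N|/n)\ll n^3|N|^{-2}$ settles these after enlarging $C$; so I assume $n\ge2$ and $|N|$ large. It then suffices to prove
\[
\mu(\kappa_n=N)\ \ll_\omega\ \frac{n}{|N|^2}\qquad(1\le n\le\omega\log|N|),
\]
since the factor by which the asserted bound exceeds $n/|N|^2$, namely
\[
\frac{(\log|N|\log\log|N|)^{d/2}}{a_n^d}=\Bigl(\tfrac{\log|N|}{n}\cdot\tfrac{\log\log|N|}{\log n}\Bigr)^{d/2},
\]
is bounded below by a positive constant $c(\omega,d)$ on this range: the first factor is $\ge1/\omega$, and the second is $\ge\tfrac12$ for $|N|$ large because $\log\log|N|\ge\log n-\log\omega$ (using the convention $\log n=1$ for $n=1$).

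Next I would fix the truncation level $h:=|N|/(3n)$, which is large, and let $\#_h(x)$ be the number of indices $0\le j<n$ with $|\kappa(T^jx)|>h$. If $\#_h=0$ then $|\kappa_n|\le nh=|N|/3<|N|$, so on $\{\kappa_n=N\}$ one has $\#_h\ge1$. If moreover $\#_h=1$, occurring at an index $j_1$, then the other $n-1$ summands have total modulus $\le(n-1)h<|N|/3$, whence $|\kappa(T^{j_1}x)|\ge|N|-|N|/3=\tfrac23|N|$. Hence, using the $T$-invariance of $\mu$ together with $\mu(|\kappa|>t)\ll t^{-2}$,
\[
\mu(\kappa_n=N,\ \#_h=1)\ \le\ \mu\bigl(\textstyle\max_{0\le j<n}|\kappa\circ T^j|\ge\tfrac23|N|\bigr)\ \le\ n\,\mu\bigl(|\kappa|\ge\tfrac23|N|\bigr)\ \ll\ \frac{n}{|N|^2},
\]
which already meets the required bound.

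For the remaining term $\mu(\kappa_n=N,\ \#_h\ge2)\le\mu(\#_h\ge2)$ I would run a second-moment estimate and use $T$-invariance,
\[
\mu(\#_h\ge2)\ \le\ \sum_{0\le j<j'<n}\mu\bigl(|\kappa\circ T^j|>h,\ |\kappa\circ T^{j'}|>h\bigr)\ \le\ n\sum_{k=1}^{n-1}\mu\bigl(|\kappa|>h,\ |\kappa\circ T^k|>h\bigr),
\]
and the essential input is a joint-tail estimate
\[
\mu\bigl(|\kappa|>h,\ |\kappa\circ T^k|>h\bigr)\ \ll\ h^{-3}\qquad\text{uniformly in }k\ge1
\]
(any fixed exponent strictly below $-2$ would do). This is where the billiard geometry must enter: the cells on which $|\kappa|$ is large lie in thin neighbourhoods of the corridor singularity lines, two such large-free-flight events are geometrically (nearly) transversal, so their joint measure is far smaller than the marginal $\mu(|\kappa|>h)\asymp h^{-2}$; the required quantitative form should follow from the free-flight estimates of~\cite{SV07} (cf.~\cite{ChernovZhang08,PT20}). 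Granting it,
\[
\mu(\kappa_n=N,\ \#_h\ge2)\ \ll\ n^2h^{-3}\ \asymp\ \frac{n^5}{|N|^3}\ \ll\ \frac{n}{|N|^2}
\]
because $n^4\le\omega^4(\log|N|)^4=o(|N|)$ on $n\le\omega\log|N|$. Adding the two contributions gives $\mu(\kappa_n=N)\ll_\omega n|N|^{-2}$, completing the argument.

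The single hard point is the uniform-in-$k$ joint-tail bound: it is the only input beyond the one-point tail $\mu(|\kappa|>t)\ll t^{-2}$, and it cannot be dispensed with, since with only the trivial estimate $\mu(|\kappa|>h,\ |\kappa\circ T^k|>h)\le\mu(|\kappa|>h)\asymp h^{-2}$ the $\#_h\ge2$ contribution is no better than the naive bound $O(n^3/|N|^2)$, which overshoots the claim by powers of $\log|N|$ precisely when $n$ is of order $\omega\log|N|$.
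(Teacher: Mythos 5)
Your overall skeleton — reduce to showing $\mu(\kappa_n=N)\ll_\omega n/|N|^2$ on the range $n\le\omega\log|N|$, split according to the number of exceedances of a level of order $|N|/n$, handle the single-big-jump part by the marginal tail $\mu(|\kappa|>t)\ll t^{-2}$, and control the two-big-jump part by a joint-tail estimate — is exactly the decomposition used in the paper, and your reduction step and the $\#_h\le 1$ cases are correct. The genuine gap is the step you yourself flag as the ``single hard point'': the uniform-in-$k$ bound $\mu(|\kappa|>h,\ |\kappa|\circ T^k>h)\ll h^{-3}$ is not proved, and it does not ``follow from the free-flight estimates of~\cite{SV07}'' in the form you need. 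What \cite[Lemma~16]{SV07} (see also \cite[Lemma~5.1]{ChernovZhang08}) actually provides, and what the paper uses, is much weaker: for $1\le|r|<\omega'\log p$,
\[
\mu\bigl(|\kappa|=p,\ |\kappa|\circ T^r\ge cp^{4/5}\bigr)\ \le\ C\,p^{-2/45}\,\mu(|\kappa|=p),
\]
i.e.\ a gain of only a tiny power $p^{-2/45}$ over the marginal, and with the second threshold lowered to $p^{4/5}$, not $p$. Whether the full extra factor $h^{-1}$ that you postulate holds uniformly in the gap $k$ (in particular for consecutive collisions gliding in the same corridor) is a nontrivial geometric question that your proposal does not address; without it, as you note, the $\#_h\ge2$ term is not under control, so the decisive estimate of the proof is missing as written.

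The good news is that your argument closes once the postulated input is replaced by the estimate that is actually available, at the cost of the asymmetric thresholds and slightly different bookkeeping — and this is precisely the paper's proof: one jump must exceed $|N|/n$, a second must exceed $|N|/(2(n-1))\ge c|N|^{4/5}$, and then
\[
\mu\bigl(S_n\ge|N|,\ M_n\le\tfrac{|N|}{2}\bigr)\ \ll\ n\sum_{p\ge|N|/n}(\log p)\,p^{-(3+2/45)}\ \ll\ \frac{n^{3+\eta}}{|N|^{2+\eta}}\ \ll\ \frac{n}{|N|^2},
\]
where the deficit coming from the weak gain is absorbed because $n\le\omega\log|N|$ makes any fixed power of $n$ dominated by $|N|^{\eta}$ (the restriction $|r|<\omega'\log p$ is likewise legitimised by converting $n\le\omega\log|N|$, $p\ge|N|/n$ into $n\le\omega'\log p$). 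So your requirement of a full $h^{-1}$ gain is both unjustified by the cited sources and unnecessarily strong; the proof should be run with the $p^{-2/45}$-gain estimate, which your second-moment computation accommodates after redoing the arithmetic.
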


\begin{proof}
Let 
\[
S_n=\sum_{j=0}^{n-1}|\kappa|\circ T^j,
\qquad M_n=\max_{j=0,\dots,n-1}|\kappa|\circ T^j.
\]
Note that for $q\ge1$,
\[
\mu(M_n>q)\le \sum_{j=0}^{n-1}\mu(|\kappa|\circ T^j>q)
=n\mu(|\kappa|>q)\ll n/q^2.
\]
Hence
\begin{align} \nonumber
\mu(\kappa_n= N)\le \mu(S_n\ge |N|)
& \le \mu\big(M_n> \tfrac{|N|}{2}\big)+\mu\big(S_n\ge |N|,\,M_n\le \tfrac{|N|}{2}\big)
\\ & \ll n/|N|^2+\mu\big(S_n\ge |N|,\,M_n\le \tfrac{|N|}{2}\big).
\label{eq:SM}
\end{align}

Suppose that $S_n\ge|N|$ and $M_n\le |N|/2$.
There exists $i\in\{0,\dots,n-1\}$ such that
$|\kappa|\circ T^i\ge |N|/n$.
Then $\sum_{0\le j\le n-1,\,j\neq i}|\kappa|\circ T^j\ge |N|/2$
so there exists $j\in\{0,\dots,n-1\}\setminus\{i\}$ such that $|\kappa|\circ T^j\ge |N|/(2(n-1))$.
We can choose $c>0$ so that $|N|/(2(n-1))\ge cN^{4/5}$.
It follows that
\begin{align*}
\mu\big(S_n\ge |N|,\,M_n\le \tfrac{|N|}{2}\big)
 &\le \sum_{i,j=0,\dots,n-1,\,i\neq j}
\mu\big(|N|\ge |\kappa|\circ T^i\ge \tfrac{|N|}{n},\,|\kappa|\circ T^j\ge c|N|^{4/5}\big)
\\ & = \sum_{i,j=0,\dots,n-1,\,i\neq j}
\mu\big(|N|\ge |\kappa|\ge \tfrac{|N|}{n},\,|\kappa|\circ T^{j-i}\ge c|N|^{4/5}\big)
\\ & \le  n\sum_{|r|=1,\dots,n-1}
\mu\big(|N|\ge |\kappa|\ge \tfrac{|N|}{n},\,|\kappa|\circ T^r\ge c|N|^{4/5}\big)
\\ & = n\sum_{|N|\ge p\ge |N|/n}\sum_{1\le |r|<n}
\mu\big(|\kappa|=p,\,|\kappa|\circ T^r\ge c|N|^{4/5}\big)
\\ & \le  n\sum_{p\ge |N|/n}\sum_{1\le |r|<n}
\mu\big(|\kappa|=p,\,|\kappa|\circ T^r\ge cp^{4/5}\big).
\end{align*}
Note that the constraints $p\ge |N|/n$, $n\le \omega\log|N|$ imply that
\[
n\ll \log|N|\le \log(np) = \log n + \log p
\ll n/2 + \log p,
\]
so there is a constant $\omega'>0$ such that $n\le \omega'\log p$.
Hence
\[
\mu\big(S_n\ge |N|,\,M_n\le \tfrac{|N|}{2}\big)
\le n\sum_{p\ge |N|/n}\sum_{1\le |r|<\omega'\log p}
\mu\big(|\kappa|=p,\,|\kappa|\circ T^r\ge cp^{4/5}\big).
\]
By~\cite[Lemma~16]{SV07} (see also~\cite[Lemma~5.1]{ChernovZhang08}), there is a constant $C>0$ such that
\[
\mu\big(|\kappa|=p,\,|\kappa|\circ T^r\ge cp^{4/5}\big)\le Cp^{-2/45}\mu(|\kappa|=p)\ll p^{-(3+2/45)}
\]
for $1\le |r|<\omega'\log p$.
Hence taking $\eta=1/45$,
\begin{align*}
\mu\big(S_n\ge |N|,\,M_n\le \tfrac{|N|}{2}\big)
& \ll n\sum_{p\ge |N|/n}(\log p)\,p^{-(3+2/45)}
\ll n\sum_{p\ge |N|/n}p^{-(3+\eta)}
\\ & \ll \frac{n^{3+\eta}}{|N|^{2+\eta} }
=\frac{n}{|N|^2}\frac{n^{2+\eta}}{|N|^\eta}
\ll \frac{n}{|N|^2}.
\end{align*}
Combining this with~\eqref{eq:SM},
\[
\mu\big(\kappa_n= N)
\ll \frac{n}{|N|^2}= \frac{n}{a_n^d|N|^2}a_n^d.
\]
Finally,
\[
a_n=(n\log n)^{1/2}\ll (\log |N|\log\log |N|)^{1/2},
\]
completing the proof.
\end{proof}

\section{Key estimates on the one-sided tower}
\label{sec:key}

To prove Theorem~\ref{thm:lld}, it remains
by Lemma~\ref{lem:bigN}
to consider the range $\log|N|\le\eps_1 n$ where $\eps_1$ is chosen sufficiently small.
Since $\mu(\kappa_n=N)=\bar\mu_\Delta(\bar\kappa_n=N)$, it suffices to work on the one-sided tower $\bar\Delta$.
To simplify the notation, we write $(f,\Delta,\mu_\Delta)$ for the one-sided tower map, and $\kappa:\Delta\to\Z^d$ for the free flight function on the one-sided tower.

As clarified in~\cite[Lemma~5.1]{PT20},  the derivative of $P_t$ at $t=0$ is not a bounded operator from $\cB'\to L^1$.
In Section~\ref{sec:pfkey}, we define a more suitable Banach space $\cB \subset\cB'\cap L^\infty$ such that we have better control on $\Pi_t:\cB\to L^1$.
To apply the method from~\cite{MTsub}, we require the following lemmas. 
Let $\partial_j=\partial_{t_j}$ for $j=1,\dots,d$.
For $t,h\in\R^d$, $b>0$, set 
\[
  M_b(t,h)=|h|L(h)\big\{1+ L(h)\,|t|^2L(t) +|h|^{-b|t|^2L(t)}L(h)^2\,|t|^4L(t)^2\big\}
\]
where $L(t)=\log(1/|t|)$.

\begin{lemma}
\label{lem:key} 
Let $j\in\{1,\dots,d\}$, $k\in\{0,\dots,q-1\}$.
There exists $\delta>0$ such that 
$t\mapsto \lambda_{k,t}$ and $t\mapsto\Pi_{k,t}:\cB\to L^1$ are $C^1$ on $B_\delta(0)$. Moreover, $\partial_j\lambda_{k,0}=0$.

Furthermore, 
there exist $C>0$, $\delta>0$, $b>0$ such that 
for all $t,h\in B_\delta(0)$, 
\[
|\partial_j\lambda_{k,t+h}-\partial_j\lambda_{k,t}|  \le CM_b(t,h),
 \qquad
\|\partial_j\Pi_{k,t+h}-\partial_j\Pi_{k,t}\|_{\cB\mapsto L^1}  \le 
  CM_b(t,h).
\]
\end{lemma}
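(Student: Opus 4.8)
The plan is to derive everything from the Nagaev–Guivarc'h / Keller–Liverani perturbation machinery, working on the well-chosen space $\cB$ introduced in Section~\ref{sec:pfkey}. The starting point is the formula $P_t v = P(e^{it\cdot\bar\kappa}v)$, which formally gives $\partial_j P_t v = P(i\,\bar\kappa_j\, e^{it\cdot\bar\kappa} v)$ where $\bar\kappa_j$ is the $j$'th coordinate of $\bar\kappa$. The key point — and the reason the space $\cB'$ of \cite{BalintGouezel06} is not good enough (cf.\ \cite[Lemma 5.1]{PT20}) — is that multiplication by $\bar\kappa_j$ is not bounded on $\cB'$ into $L^1$. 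So the first step is to record the mapping properties of $\cB$ established in Section~\ref{sec:pfkey}: namely that $v\mapsto \bar\kappa_j v$ (and more generally $v\mapsto\bar\kappa_j^m v$ for $m$ up to roughly $2$) maps $\cB$ boundedly into $\cB'$ or $L^1$, and that the weighted differences $\|(e^{ih\cdot\bar\kappa}-1)v\|$, $\|(\bar\kappa_j(e^{ih\cdot\bar\kappa}-1))v\|$ etc.\ satisfy quantitative bounds in terms of $|h|$ and $L(h)=\log(1/|h|)$ — these are exactly the estimates that produce the $|h|L(h)$ prefactor and the various $L(h)$ powers in $M_b(t,h)$. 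Given such control, $t\mapsto P_t:\cB\to\cB'$ (or into $L^1$) is $C^1$, and the standard analytic-perturbation formulas express $\lambda_{k,t}$ and $\Pi_{k,t}$ via contour integrals $\Pi_{k,t}=\frac{1}{2\pi i}\oint (z-P_t)^{-1}\,dz$ around $\lambda_k$, with $\lambda_{k,t}=\operatorname{tr}(P_t\Pi_{k,t})$ type expressions; differentiating under the integral gives $C^1$ dependence of $\lambda_{k,t}$ and of $\Pi_{k,t}:\cB\to L^1$ on $B_\delta(0)$. That the eigenvalue is simple and isolated for small $t$ comes from Keller–Liverani as already quoted.

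The second step is the vanishing $\partial_j\lambda_{k,0}=0$. Differentiating the eigenvalue relation $P_t v_{k,t}=\lambda_{k,t}v_{k,t}$ at $t=0$ and pairing against the left eigenfunctional gives $\partial_j\lambda_{k,0}=\ell_{k}\big(P(i\bar\kappa_j v_k)\big)$, which up to normalisation is $i\int \bar\kappa_j\, d\bar\mu_\Delta$ (for $k=0$; for general $k$ one gets an analogous integral of $\bar\kappa_j$ against the relevant eigendata of the $q$-periodic part). This is zero because $\bar\kappa$, i.e.\ $\kappa$, has mean zero — the Lorentz free flight function is a coboundary-free zero-mean $\Z^d$-valued function by the symmetry of the billiard configuration, a fact implicit in \cite{SV07}. (Even without symmetry, one only needs $\int\bar\kappa_j\,d\bar\mu_\Delta=0$, which is part of the setup in \cite{SV07} ensuring the CLT normalisation is $a_n$ rather than linear.) This is the cheap part.

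The third and by far the hardest step is the modulus-of-continuity bound $|\partial_j\lambda_{k,t+h}-\partial_j\lambda_{k,t}|\le C M_b(t,h)$ and its operator analogue. The idea is to write $\partial_j\lambda_{k,t}$ as an explicit functional of $P_t$, $\partial_j P_t$ and the resolvent/projection $\Pi_{k,t}$, and then estimate the difference at $t+h$ and $t$ by splitting it into (i) the increment $\partial_j P_{t+h}-\partial_j P_t$ acting on fixed data, (ii) the increment $\Pi_{k,t+h}-\Pi_{k,t}$, and (iii) the increment in the eigenfunctions. For (i), $\partial_j P_{t+h}-\partial_j P_t = P\big(i\bar\kappa_j(e^{ih\cdot\bar\kappa}-1)e^{it\cdot\bar\kappa}\,\cdot\,\big)$, and one bounds $\|\bar\kappa_j(e^{ih\cdot\bar\kappa}-1)v\|$ using the polynomial tail $\bar\mu_\Delta(|\bar\kappa|>n)\sim Cn^{-2}$ (Proposition~\ref{prop:psi} at the tower level): splitting $\{|\bar\kappa|\le 1/|h|\}$ versus $\{|\bar\kappa|>1/|h|\}$, on the first set $|\bar\kappa_j(e^{ih\cdot\bar\kappa}-1)|\le |h||\bar\kappa|^2$ whose integral against the $n^{-2}$ tail produces the logarithmic factor $\log(1/|h|)=L(h)$, and on the second set the tail contributes $|h|\cdot$const. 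This is where the genuinely new operator renewal / tower analysis of Section~\ref{sec:pfkey} is needed, because one must do this while keeping the $\cB$-norm under control, not just in $L^1$. For (ii) one feeds in the just-proved $C^1$ bound on $\Pi_{k,\cdot}$ together with the resolvent identity $\Pi_{k,t+h}-\Pi_{k,t}=\frac{1}{2\pi i}\oint\big[(z-P_{t+h})^{-1}-(z-P_t)^{-1}\big]dz$ expanded as a Neumann-type series in $P_{t+h}-P_t$; the quadratic and quartic terms in $|t|$ in $M_b(t,h)$ come from iterating this expansion and using \eqref{eq:SV}, $1-\lambda_{0,t}\sim \Sigma t\cdot t\,L(t)$, to control how close $\lambda_{k,t}$ is to the unit circle (the spectral gap degenerates like $|t|^2 L(t)$, which is why the corrections carry factors $|t|^2L(t)$ and $|t|^4L(t)^2$). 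The peculiar term $|h|^{-b|t|^2L(t)}$ is the signature of this gap degeneration interacting with powers: it is $(1/|h|)^{b(1-|\lambda_{0,t}|)}\approx$ a bounded-but-slowly-growing factor reflecting that the $n$'th power estimates in the companion argument only survive up to $n\sim 1/(1-|\lambda_{0,t}|)$, i.e.\ $|N|$ up to $e^{\eps_1 n}$ as flagged in Remark~\ref{rmk:opt}(b). The main obstacle, concretely, is carrying out the splitting/interpolation argument for $\|\bar\kappa_j(e^{ih\cdot\bar\kappa}-1)v\|_{\cB\to\cB'}$ and the analogous $\partial_j\Pi$ bound \emph{inside the tower Banach space} rather than in $L^1$ — i.e.\ proving that $\cB$ is simultaneously (a) preserved well enough by $P$ to have a spectral gap, and (b) compatible with multiplication by $\bar\kappa$ up to second order — and then bookkeeping the resulting error terms so that they collapse precisely into $M_b(t,h)$; I would expect this bookkeeping, done via a few auxiliary sublemmas on weighted $L^p$-type estimates against the $n^{-2}$ tail, to occupy the bulk of Section~\ref{sec:pfkey}.
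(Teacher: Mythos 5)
Your plan has a genuine gap: it never supplies the mechanism that makes the differentiability claims true, and the direct route you sketch runs into exactly the obstruction the paper starts from. You propose to differentiate $t\mapsto P_t$ and then get $C^1$ dependence of $\lambda_{k,t}$, $\Pi_{k,t}$ from contour integrals $\frac{1}{2\pi i}\oint(z-P_t)^{-1}dz$ and Neumann expansions in $P_{t+h}-P_t$. But $\partial_j P_t v=P(i\bar\kappa_j e^{it\cdot\bar\kappa}v)$ is only bounded from $\cB\subset L^\infty$ into the \emph{weak} space $L^1$ (this is the content of \cite[Lemma~5.1]{PT20} quoted in Section~\ref{sec:key}), while the resolvent $(z-P_t)^{-1}$ is bounded only on the \emph{strong} space $\cB'$ where quasicompactness holds; the composition $(z-P_t)^{-1}\,\partial_jP_t\,(z-P_t)^{-1}$ that differentiation under the contour integral requires is therefore not defined as a bounded operator, and no choice of $\cB$ of dynamically H\"older functions fixes this, since multiplication by the unbounded function $\bar\kappa_j$ cannot map such a space into a space with a spectral gap. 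Your auxiliary claim that $v\mapsto\bar\kappa_j^m v$ is controlled ``for $m$ up to roughly $2$'' is also untenable: $\bar\kappa\notin L^2(\bar\Delta)$ (the tails are exactly $n^{-2}$), which is precisely why the paper never takes two $t$-derivatives and instead works with one derivative plus a modulus of continuity carrying the logarithmic factors in $M_b(t,h)$.

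What the paper actually does — and what your proposal is missing — is to induce to the base $Y$ of the tower and run an operator renewal argument: one studies $\hR(z,t)v=R(e^{it\cdot\kappa_\sigma}z^\sigma v)$ on $\cB_1(Y)$, where two structural gains make everything work. First, the Gibbs--Markov transfer operator smooths: $\|R(uv)\|_{\cB_1(Y)}\ll\|u\|_1\|v\|_{\cB_1(Y)}$ for $u$ constant on partition elements (Proposition~\ref{prop:Ruv}), so $\partial_j\hR(z,t)v=iR((\kappa_\sigma)_je^{it\cdot\kappa_\sigma}z^\sigma v)$ \emph{is} bounded on the strong space even though multiplication by $\kappa_\sigma$ is not. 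Second, $\sigma$ has exponential tails, so $\hR$, $\hA$, $\hB$, $\hE$ are analytic in $z$ on a disc of radius $1+\delta$, and the quantitative bounds of Propositions~\ref{prop:furtherR}--\ref{prop:furtherB} (where the factors $L(h)$ and $|h|^{-b\log|z|}(|z|-1)$ arise from splitting over $\{\psi=m,\sigma=n\}$ against the tail $\mu_Y(\psi>m)=O(m^{-2})$) feed into the eigenvalue $\tau_k(z,t)$ of $\hR(z,t)$, the implicit solution $g_k(t)$ of $\tau_k(z,t)=1$, and the decomposition of $(I-\hR(z,t))^{-1}$. Only then, via $\hT=(I-\hR)^{-1}$ and $\hP=\hA\hT\hB+\hE$ and a comparison with the Keller--Liverani expansion~\eqref{eq:Sz}, does one identify $\lambda_{k,t}=g_k(t)^{-1}$ and $\Pi_{k,t}=g_k(t)^{-1}\tilde\pi_{k,1}(t)$ and transfer the regularity to them; in particular $\partial_j\lambda_{k,0}=0$ comes from $|g_k(t)-\bar\lambda_k|\ll|t|^2L(t)$ (which uses $\int_Y\kappa_\sigma\,d\mu_Y=0$), not from the naive first-order perturbation formula for $P_t$. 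Your proposal defers all of this to ``the estimates established in Section~\ref{sec:pfkey}'' while proposing a different and non-viable route around them, so as a blind proof it does not close.
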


\begin{lemma} \label{lem:key2}
There exists $b>0$ such that
$|\lambda_k-\lambda_{k,t}|\le b|t|^2L(t)$ 
for all $t\in B_\delta(0)$, $k=0,\dots,q-1$.
\end{lemma}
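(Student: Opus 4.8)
The plan is to derive Lemma~\ref{lem:key2} as a direct corollary of the quantitative estimates already assembled in Lemma~\ref{lem:key}, avoiding any independent perturbative computation. First I would observe that the asymptotic~\eqref{eq:SV} for $1-\lambda_{0,t}$ gives exactly the desired bound for $k=0$ in a small enough ball $B_\delta(0)$, since $\Sigma t\cdot t\,\log(1/|t|) = O(|t|^2 L(t))$; for the other indices $k=1,\dots,q-1$ one has $\lambda_k = \lambda_{k,0}$ a $q$'th root of unity, and the analogous asymptotic for $1-\lambda_k^{-1}\lambda_{k,t}$ (which follows from the spectral structure on $\bar\Delta$, the eigenvalues $\lambda_{k,t}$ being the $t$-perturbations of $\lambda_k$ and related to $\lambda_{0,t}$ via the $q$-fold symmetry as in~\cite{SV07,BalintGouezel06}) likewise yields $|\lambda_k-\lambda_{k,t}|=O(|t|^2L(t))$.

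Alternatively — and this is the cleaner route given what we may assume — I would use the $C^1$ control from Lemma~\ref{lem:key}. By that lemma, $t\mapsto\lambda_{k,t}$ is $C^1$ on $B_\delta(0)$ with $\partial_j\lambda_{k,0}=0$ for every $j$, and the modulus of continuity estimate $|\partial_j\lambda_{k,t+h}-\partial_j\lambda_{k,t}|\le CM_b(t,h)$ holds. Setting $t=0$ in that estimate gives $|\partial_j\lambda_{k,h}|=|\partial_j\lambda_{k,h}-\partial_j\lambda_{k,0}|\le CM_b(0,h)=C|h|L(h)\bigl\{1+L(h)\cdot 0+\dots\bigr\}=C|h|L(h)$, since the terms in $M_b(0,h)$ involving $|t|$ vanish when $t=0$. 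Then for $h\in B_\delta(0)$, integrating the gradient along the segment from $0$ to $h$,
\[
|\lambda_k-\lambda_{k,h}|=\Bigl|\int_0^1 \tfrac{d}{ds}\lambda_{k,sh}\,ds\Bigr|
=\Bigl|\int_0^1 \sum_{j=1}^d h_j\,\partial_j\lambda_{k,sh}\,ds\Bigr|
\le \int_0^1 |h|\max_j|\partial_j\lambda_{k,sh}|\,ds
\le C'|h|\int_0^1 |sh|L(sh)\,ds.
\]
Since $L(sh)=\log(1/(s|h|))=\log(1/s)+\log(1/|h|)\le \log(1/s)+L(h)$ and $\int_0^1 s\log(1/s)\,ds<\infty$, the last integral is $\le C''|h|L(h)$, hence $|\lambda_k-\lambda_{k,h}|\le b|h|^2L(h)$ after renaming constants. (The convention $\log x=1$ for $x\in(0,2)$ ensures $L$ is bounded below and all expressions are well-defined; shrinking $\delta$ if necessary handles the boundary behaviour.)

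The main obstacle here is essentially none at the level of this lemma — it is a short consequence of Lemma~\ref{lem:key}, which is itself the genuinely hard input (deferred to Section~\ref{sec:pfkey}). The only point requiring a little care is checking that the $|t|$-dependent terms of $M_b(t,h)$ genuinely drop out at $t=0$: the middle term $L(h)|t|^2L(t)$ clearly vanishes, and the third term $|h|^{-b|t|^2L(t)}L(h)^2|t|^4L(t)^2$ also vanishes since the exponent $|t|^2L(t)\to0$ makes the prefactor $|h|^{-b|t|^2L(t)}\to1$ (bounded) while $|t|^4L(t)^2\to0$; so $M_b(0,h)=|h|L(h)$ as claimed. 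One should state this lemma before Lemma~\ref{lem:key} logically only if one wants the pure-perturbation proof; given the $C^1$-derivation above it is fine to place it after. I would present the $C^1$ argument as the proof, as it reuses machinery already in hand and makes the constant $b$ manifestly uniform in $k$.
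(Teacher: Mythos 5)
Your second (``cleaner'') argument is correct, and it is a genuinely different route from the paper's. The paper proves Lemma~\ref{lem:key2} inside the renewal machinery of Section~\ref{sec:pfkey}: it starts from the Sz\'asz--Varj\'u asymptotic~\eqref{eq:SV} for $k=0$, converts it via~\eqref{eq:g} into $|1-\tau_0(1,t)|\ll|t|^2L(t)$, transfers this to the other eigenvalue branches by Proposition~\ref{prop:mu0}(c), and then passes back through $g_k$ to $\lambda_{k,t}=g_k(t)^{-1}$. You instead treat Lemma~\ref{lem:key} as a black box: from $\partial_j\lambda_{k,0}=0$ and $M_b(0,h)=|h|L(h)$ (an identification the paper itself uses in the proofs of Proposition~\ref{prop:worst} and Lemma~\ref{lem:PiLLD}) you get $|\partial_j\lambda_{k,t}|\ll|t|L(t)$, and integrating along the segment from $0$ to $t$, with $\int_0^1 s\log(1/s)\,ds<\infty$, yields $|\lambda_k-\lambda_{k,t}|\ll|t|^2L(t)$. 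This is sound and non-circular, since the paper completes the proof of Lemma~\ref{lem:key} before, and independently of, Lemma~\ref{lem:key2}; your bookkeeping of the degenerate terms in $M_b(0,h)$ is also right. Two comparative remarks. First, your opening sketch (handling $k\neq 0$ by a ``$q$-fold symmetry'' appeal to~\eqref{eq:SV}) is not a proof as written -- the actual mechanism in the paper is Proposition~\ref{prop:mu0}(c), which shows the $\tau_k(\bar\lambda_k,t)$ agree up to $O(|t|^2)$ -- so you are right to discard it in favour of the $C^1$ argument. Second, note what each route buys: the paper's detour through~\eqref{eq:SV} also delivers the two-sided asymptotic $\lambda_k-\lambda_{k,t}\sim\lambda_k\Sigma t\cdot t\,L(t)$ recorded in the remark after the lemma, and it is this lower-bound (spectral gap) information that is really being invoked when Corollary~\ref{cor:int} extracts $|\lambda_{k,t}|\le\exp\{-b|t|^2L(t)\}$; your gradient-integration argument is intrinsically one-sided (smoothness cannot push the eigenvalue off the unit circle), so it proves the lemma exactly as stated but cannot substitute for the~\eqref{eq:SV} input (or~\eqref{eq:ass2} in the abstract setting) where the gap itself is needed.
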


\begin{rmk} 
Using the asymptotic in~\eqref{eq:SV}, we obtain the stronger conclusion
$\lambda_k-\lambda_{k,t}\sim\lambda_k\Sigma t\cdot t\, L(t)$ as $t\to0$.
However, this stronger conclusion is not used in this paper and we consider the weaker conclusion since it generalises to the abstract setting of Theorem~\ref{thm:abs}.
\end{rmk}

\begin{cor} \label{cor:int}
Let $\beta\ge0$, $r\in\R$, $k=0,\dots,q-1$.
There exist $C>0$, $\delta>0$ such that
\[
\int_{B_{2\delta}(0)} |t|^\beta L(t)^r|\lambda_{k,t}|^n \,dt\le 
C\frac{(\log n)^r}{a_n^{d+\beta}}
\quad\text{for all $n\ge1$}.
\]
\end{cor}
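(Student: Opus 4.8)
The plan is to bound each leading eigenvalue by a Gaussian-in-$t$ factor and then reduce to an elementary Laplace-type integral estimate.

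\textbf{Step 1 (Gaussian bound on the eigenvalues).} I would first show there exist $c>0$ and $\delta>0$, with $2\delta$ small, such that $|\lambda_{k,t}|\le e^{-c|t|^2L(t)}$ for all $t\in B_{2\delta}(0)$ and $k=0,\dots,q-1$, and such that $L(t)\ge1$ on $B_{2\delta}(0)$. For $k=0$ this is read off from~\eqref{eq:SV}: since $\Sigma$ is positive-definite, for small $|t|$ one has $\Re(1-\lambda_{0,t})\ge\tfrac12\Sigma t\cdot t\,L(t)$ and $|\Im\lambda_{0,t}|=o(\Sigma t\cdot t\,L(t))$, hence $|\lambda_{0,t}|^2\le 1-\Sigma t\cdot t\,L(t)$ and therefore $|\lambda_{0,t}|\le 1-\tfrac12\Sigma t\cdot t\,L(t)\le e^{-c|t|^2L(t)}$, with $c$ proportional to the least eigenvalue of $\Sigma$. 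The case $k\neq0$ is handled identically using $\lambda_k-\lambda_{k,t}\sim\lambda_k\Sigma t\cdot t\,L(t)$ (the remark after Lemma~\ref{lem:key2}). Choosing $\delta$ small enough that these asymptotics hold on $B_{2\delta}(0)$ and raising the bound to the $n$th power, it suffices to prove
\[
I_n:=\int_{B_{2\delta}(0)}|t|^\beta L(t)^r e^{-cn|t|^2L(t)}\,dt\ \ll\ \frac{(\log n)^r}{a_n^{\,d+\beta}}\quad(n\ge1).
\]

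\textbf{Step 2 (the integral).} Passing to polar coordinates and substituting $|t|=\rho=s/a_n$ gives, with $\ell(\rho):=\log(1/\rho)$ and $c_d$ a dimensional constant,
\[
I_n=c_d\int_0^{2\delta}\rho^{\beta+d-1}\ell(\rho)^r e^{-cn\rho^2\ell(\rho)}\,d\rho=\frac{c_d}{a_n^{\,d+\beta}}\int_0^{2\delta a_n}s^{\beta+d-1}\,\ell\!\Big(\tfrac{s}{a_n}\Big)^{\!r}\exp\!\Big(\!-c\,s^2\,\tfrac{\ell(s/a_n)}{\log n}\Big)ds .
\]
Here $\ell(s/a_n)=\log a_n-\log s$ and, for large $n$, $\tfrac12\log n\le\log a_n\le\log n$. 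I would split the $s$-integral at $s=1$ and $s=(\log n)^A$ for a large fixed $A$. On $(0,1)$: the exponential and $s^{\beta+d-1}$ are $\le1$ and $\int_0^1(\log a_n+\log(1/s))^r\,ds\ll(\log n)^r$ (using $(a+b)^r\ll a^r+b^r$ and $\int_0^1(\log1/s)^r\,ds<\infty$ when $r\ge0$, and the bound $(\log a_n+\log1/s)^r\le(\tfrac12\log n)^r$ when $r<0$). On $[1,(\log n)^A]$: there $\log a_n-\log s\asymp\log n$ and $\ell(s/a_n)/\log n\ge\tfrac14$, so the integrand is $\ll(\log n)^r s^{\beta+d-1}e^{-cs^2/4}$ and integrates to $\ll(\log n)^r$. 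On $[(\log n)^A,2\delta a_n]$: there $\log a_n-\log s\ge-\log(2\delta)>0$, so the exponent is $\gg(\log n)^{2A-1}$ while the remaining factors are polynomially bounded in $n$; this range therefore contributes a quantity smaller than every fixed power of $\log n$. Summing, $I_n\ll(\log n)^r a_n^{-(d+\beta)}$ for large $n$, and the finitely many remaining $n$ are absorbed into $C$ since the left side is a finite integral and the right side is bounded away from $0$ on any finite set of $n$.

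\textbf{Main obstacle.} The only non-routine ingredient is Step~1, and specifically the \emph{lower} bound $1-|\lambda_{k,t}|\gg|t|^2L(t)$: the upper bound of Lemma~\ref{lem:key2} alone does not suffice (a priori $\lambda_{k,t}$ could be constant in $t$), so one genuinely uses the nondegeneracy (positive-definiteness of $\Sigma$) in~\eqref{eq:SV} and, for $k\neq0$, its transfer to the non-principal eigenvalues. Step~2 is then standard Laplace asymptotics; the only care needed is that $r$ may be negative, so the slowly varying factor $L(t)^r$ cannot simply be dropped and the exponential decay in the tail $|t|\gg a_n^{-1}$ must be retained.
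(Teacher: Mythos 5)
Your proposal is correct and follows the same overall strategy as the paper: first a pointwise bound $|\lambda_{k,t}|\le e^{-c|t|^2L(t)}$, then a Laplace-type integral estimate. The paper outsources your Step~2 to~\cite[Lemma~2.3]{MTsub} (noting only that the two normalisations of $a_n$ agree up to a constant), whereas you carry it out by hand; your splitting at $s=1$ and $s=(\log n)^A$ is sound, including the care taken with negative $r$. The one substantive difference is in Step~1, and here your version is actually the more careful one: the paper derives the Gaussian bound by citing Lemma~\ref{lem:key2}, but as you observe, that lemma as stated is only an \emph{upper} bound on $|\lambda_k-\lambda_{k,t}|$ and cannot by itself rule out $\lambda_{k,t}\equiv\lambda_k$; the displayed chain of inequalities in the paper's proof ($|\lambda_{k,t}-\lambda_k|\le -b|t|^2L(t)$) has an evident sign slip and what is really being used is the lower bound $1-|\lambda_{k,t}|\gg|t|^2L(t)$, which — exactly as you argue — comes from the positive-definiteness of $\Sigma$ in~\eqref{eq:SV} for $k=0$ and from the asymptotic $\lambda_k-\lambda_{k,t}\sim\lambda_k\Sigma t\cdot t\,L(t)$ of the remark for $k\neq0$. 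So your identification of the ``main obstacle'' is precisely the point where the paper's own write-up is loosest, and your proof fills it correctly.
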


\begin{proof}
By Lemma~\ref{lem:key2}, after shrinking $\delta$, 
\[
|\lambda_{k,t}|-1\in(-1,0], \quad \log|\lambda_{k,t}|\le |\lambda_{k,t}|-1\le
|\lambda_{k,t}-\lambda_k|\le  -b|t|^2 L(t)
\]
for all $t\in B_{2\delta}(0)$, and so
$|\lambda_{k,t}|\le \exp\{-b|t|^2 L(t)\}$.
The result follows from~\cite[Lemma~2.3]{MTsub}.
(The argument in~\cite{MTsub} uses that $a_n$ satisfies
$n\log a_n\sim a_n^2$, so $a_n\sim (\frac12 n\log n)^{1/2}$, which agrees with the definition of $a_n$ used here up to an inconsequential constant factor.)
\end{proof}

\section{Proof of Lemmas~\ref{lem:key} and~\ref{lem:key2}}
\label{sec:pfkey}

We continue to work on the one-sided tower $\Delta$.
Fix $\theta\in(0,1)$ and recall the definition of the metric $d_\theta$ on $Y$ from Section~\ref{sec:setup}.
We define
the Banach space $\cB=\cB(\Delta)$ of dynamically H\"older observables $v:\Delta\to\R$ with 
$\|v\|_{\cB}<\infty$, where
\[
\|v\|_{\cB}=\sup_{(y,\ell)\in\Delta} |v(y,\ell)|
+ \sup_{(y,\ell)\neq(y',\ell)} 
\frac{|v(y,\ell)-v(y',\ell)|}{d_\theta(y,y')}.
\]

In this section, we often write $\cB(\Delta)$ and $L^1(\Delta)$ for the function spaces on the Young tower $\Delta$, to distinguish them from related function spaces defined on the base~$Y$.

\subsection{Renewal operators}
\label{sec:Y}

Let $R:L^1(Y)\to L^1(Y)$ denote the transfer operator corresponding to the Gibbs-Markov map $F:Y\to Y$, 
so $\int_Y Rv\,w\,d\mu_Y=\int_Y v\,w\circ F\,d\mu_Y$ for all $v\in L^1$, $w\in L^\infty$.
For $y\in Y$ and $a\in\alpha$, let $y_a$ denote the unique preimage $y_a\in a$ such that $Fy_a=y$. 
Recall that $(Rv)(y)=\suma \xi(y_a)v(y_a)$ and that there is a constant $C>0$ such that
\begin{equation} \label{eq:GM}
0<\xi(y_a)\le C\mu_Y(a), \qquad
|\xi(y_a)-\xi(y'_a)|\le C\mu_Y(a)d_\theta(y,y'),
\end{equation}
for all $y,y'\in Y$, $a\in\alpha$.
(Standard references for properties of the transfer operator $R$ for a Gibbs-Markov map include~\cite{AD01,ADU93}.) 

Define the Banach space $\cB_1(Y)$ of observables $v:Y\to\R$ with 
$\|v\|_{\cB_1(Y)}<\infty$ where
$\|v\|_{\cB_1(Y)}=\|v\|_\infty+\sup_{y\neq y'}|v(y)-v(y')|/d_\theta(y,y')$.

\begin{prop} \label{prop:Ruv} There exists $C>0$ such that
$\|R(uv)\|_{\cB_1(Y)}\le C \|u\|_1\|v\|_{\cB_1(Y)}$
for all $u\in L^1(Y)$ constant on partition elements and all $v\in\cB_1(Y)$.
\end{prop}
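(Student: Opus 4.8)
The plan is to work directly from the formula $(Rv)(y)=\sum_a \xi(y_a)v(y_a)$ for the Gibbs--Markov transfer operator, applied to the product $uv$, and to control the two pieces of the $\cB_1(Y)$-norm (sup norm and Lipschitz constant in $d_\theta$) separately, exploiting that $u$ is constant on partition elements.

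First I would estimate the sup norm. Writing $(R(uv))(y)=\sum_a \xi(y_a)u(y_a)v(y_a)$ and using that $u$ is constant on each $a\in\alpha$ (so $u(y_a)$ depends only on $a$, call it $u_a$), together with $|v(y_a)|\le\|v\|_\infty\le\|v\|_{\cB_1(Y)}$ and the bound $0<\xi(y_a)\le C\mu_Y(a)$ from~\eqref{eq:GM}, one gets
\[
|(R(uv))(y)|\le \sum_a \xi(y_a)\,|u_a|\,\|v\|_{\cB_1(Y)}
\le C\|v\|_{\cB_1(Y)}\sum_a \mu_Y(a)|u_a|
= C\|v\|_{\cB_1(Y)}\|u\|_1,
\]
since $\sum_a\mu_Y(a)|u_a|=\int_Y|u|\,d\mu_Y=\|u\|_1$ as $u$ is constant on partition elements. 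This bounds $\|R(uv)\|_\infty$ by $C\|u\|_1\|v\|_{\cB_1(Y)}$.

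Next I would estimate the Lipschitz part: for $y\neq y'$, write
\[
(R(uv))(y)-(R(uv))(y')=\sum_a\big[\xi(y_a)u_a v(y_a)-\xi(y'_a)u_a v(y'_a)\big],
\]
and split each bracket as $u_a\xi(y_a)\big(v(y_a)-v(y'_a)\big)+u_a\big(\xi(y_a)-\xi(y'_a)\big)v(y'_a)$. For the first term I would use the standard Gibbs--Markov contraction $d_\theta(y_a,y'_a)\le\theta\,d_\theta(y,y')$ (since $F$ maps $y_a,y'_a$ to $y,y'$), so $|v(y_a)-v(y'_a)|\le\|v\|_{\cB_1(Y)}\,\theta\,d_\theta(y,y')$, and again $\xi(y_a)\le C\mu_Y(a)$; summing over $a$ gives a bound $C\theta\,d_\theta(y,y')\,\|v\|_{\cB_1(Y)}\sum_a\mu_Y(a)|u_a|=C\theta\,d_\theta(y,y')\|u\|_1\|v\|_{\cB_1(Y)}$. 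For the second term I would use the distortion bound $|\xi(y_a)-\xi(y'_a)|\le C\mu_Y(a)d_\theta(y,y')$ from~\eqref{eq:GM} together with $|v(y'_a)|\le\|v\|_\infty$, giving a bound $C\,d_\theta(y,y')\,\|v\|_{\cB_1(Y)}\sum_a\mu_Y(a)|u_a|=C\,d_\theta(y,y')\|u\|_1\|v\|_{\cB_1(Y)}$. Dividing by $d_\theta(y,y')$ and adding the two contributions bounds the Lipschitz seminorm of $R(uv)$ by $C\|u\|_1\|v\|_{\cB_1(Y)}$, and combining with the sup-norm bound completes the proof.

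The only mildly delicate point — really the crux — is the reduction $\sum_a\mu_Y(a)|u_a|=\|u\|_1$, which relies precisely on the hypothesis that $u$ is constant on partition elements; without this, $u(y_a)$ would vary with $y$ and one could not pull it out of the sum to recover an $L^1$ norm. Everything else is a routine application of the two displayed Gibbs--Markov estimates~\eqref{eq:GM} and the bounded-distortion contraction $d_\theta(y_a,y'_a)\le\theta\,d_\theta(y,y')$, so I do not expect any genuine obstacle; the constant $C$ in the statement can be taken to be a fixed multiple of the constant appearing in~\eqref{eq:GM} times $(1+\theta)$.
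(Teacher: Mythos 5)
Your proof is correct and follows essentially the same route as the paper's: the same sup-norm estimate via $\xi(y_a)\le C\mu_Y(a)$ and $\sum_a\mu_Y(a)|u_a|=\|u\|_1$, and the same two-term decomposition of $(R(uv))(y)-(R(uv))(y')$ handled with the distortion bound and the expansion estimate $d_\theta(y_a,y'_a)\le\theta\, d_\theta(y,y')$ (the paper only uses $\le d_\theta(y,y')$, an immaterial difference). No gaps.
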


\begin{proof}
Since $u$ is constant on partition elements, we write $u(a)=u|_a$.
By~\eqref{eq:GM},
\[
\|R(uv)\|_\infty \ll \suma \mu_Y(a)|u(a)|\,d\mu_Y\,\|v\|_\infty
=\|u\|_1\|v\|_\infty.
\]
Next, 
let $y,y'\in Y$. Then
$(R(uv))(y)-(R(uv))(y')=I_1+I_2$ where
\[
I_1  =\suma (\xi(y_a)-\xi(y'_a))u(a)v(y_a), \qquad
  I_2  = \suma \xi(y'_a)u(a)(v(y_a)-v(y'_a)).
\]
By~\eqref{eq:GM},
\begin{align*}
|I_1| & \ll \suma \mu_Y(a) d_\theta(y,y')|u(a)|\|v\|_\infty
= \|u\|_1\|v\|_\infty\,d_\theta(y,y'), \\
|I_2| & \ll \suma \mu_Y(a) |u(a)|\|v\|_{\cB_1(Y)}\,d_\theta(y_a,y'_a)
\le  \|u\|_1\|v\|_{\cB_1(Y)}\,d_\theta(y,y').
\end{align*}
Hence $|(R(uv))(y)-(R(uv))(y')|\ll \|u\|_1\|v\|_{\cB_1(Y)}\,d_\theta(y,y')$, and the result follows.
\end{proof}

For $z\in\C$ with $|z|\le1$ and $t\in\R^d$, define
\begin{align*}
\hR(z,t) & :L^1(Y)\to L^1(Y), \qquad \hR(z,t)v=R(e^{it\cdot\kappa_\sigma}z^\sigma v)
=\sum_{n=1}^\infty z^n R_{t,n}v
\end{align*}
where $R_{t,n}v=R(1_{\{\sigma=n\}}e^{it\cdot\kappa_\sigma}v)$
and $\kappa_\sigma(y)=\sum_{\ell=0}^{\sigma-1}\kappa(y,\ell)$.
Recall that $\psi(y)=\sum_{\ell=0}^{\sigma(y)}|\kappa(y,\ell)|$.
By Proposition~\ref{prop:psi}, $\kappa_\sigma,\,\psi\in L^r(Y)$ for all $r<2$.

\begin{prop} \label{prop:Rt}
There exists $\delta>0$ such that, regarded as operators on $\cB_1(Y)$,
\begin{itemize}
\item[(a)] $z\mapsto \hR(z,t)$
is analytic on $B_{1+\delta}(0)$ for all $t\in\R^d$;
\item[(b)] $(z,t)\mapsto (\partial_z^k\hR)(z,t)$ is $C^1$ on $B_{1+\delta}(0)\times\R^d$ for $k=0,1,2$;
\item[(c)]
$z\mapsto (\partial_j\hR)(z,t)$ is $C^1$ on $B_{1+\delta}(0)$ uniformly in $t\in\R^d$ for $j=1,\dots,d$.
\end{itemize}
\end{prop}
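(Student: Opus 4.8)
The plan is to prove Proposition~\ref{prop:Rt} by exploiting the exponential tail bound $\mu_Y(\sigma>n)=O(e^{-an})$ together with the polynomial moment bound $\psi\in L^r(Y)$ for $r<2$ from Proposition~\ref{prop:psi}, and the key algebraic estimate Proposition~\ref{prop:Ruv}. The underlying principle is that each derivative in $z$ pulls down a factor of $\sigma$ and each derivative in $t_j$ pulls down a factor of $(\kappa_\sigma)_j$; since $\sigma$ has exponential tails, any polynomial power of $\sigma$ is harmless once paired with the geometric factor $z^\sigma$ for $|z|\le 1+\delta$, while at most two $t$-derivatives can be absorbed since $\kappa_\sigma\in L^r$ for $r<2$.

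First I would record the termwise formulas. For part (a), differentiating the series $\hR(z,t)v=\sum_{n\ge1}z^nR_{t,n}v$ termwise gives $(\partial_z^k\hR)(z,t)v=\sum_{n\ge k}n(n-1)\cdots(n-k+1)z^{n-k}R_{t,n}v$, and I would show this converges in operator norm on $\cB_1(Y)$ uniformly for $|z|\le 1+\delta$ with $\delta$ chosen so that $(1+\delta)<e^{a/2}$, say. The crucial input is a bound $\|R_{t,n}\|_{\cB_1(Y)\to\cB_1(Y)}\ll \mu_Y(\sigma=n)\le\mu_Y(\sigma\ge n)=O(e^{-an})$, uniformly in $t$: this follows by applying Proposition~\ref{prop:Ruv} with $u=1_{\{\sigma=n\}}e^{it\cdot\kappa_\sigma}$ — but one must be careful, since this $u$ is \emph{not} constant on partition elements (because $\kappa_\sigma$ varies) and not real-valued. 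The fix is standard: split into the product of $1_{\{\sigma=n\}}$ (which is constant on partition elements, since $\sigma$ is) and $e^{it\cdot\kappa_\sigma}$; since $\hat\kappa$ is constant on $\bar\pi^{-1}(a\times\{\ell\})$, the lift $\kappa_\sigma$ is constant on partition elements of $Y$ as well (this is in the ``properties proved in~\cite{SV07}'' paragraph), so in fact $u=1_{\{\sigma=n\}}e^{it\cdot\kappa_\sigma}$ \emph{is} constant on partition elements of $Y$, with $\|u\|_{L^1}=\mu_Y(\sigma=n)$. Applying Proposition~\ref{prop:Ruv} over $\C$-valued $v$ (which is immediate by applying it to real and imaginary parts) then gives the desired bound. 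Summing the majorised series $\sum_n n^k(1+\delta)^{n-k}e^{-an}<\infty$ yields analyticity in $z$ and the bound for all $k\ge0$.

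Next, for the $t$-dependence in parts (b) and (c), I would differentiate $R_{t,n}v=R(1_{\{\sigma=n\}}e^{it\cdot\kappa_\sigma}v)$ in $t_j$, obtaining $\partial_jR_{t,n}v=R(1_{\{\sigma=n\}}\,i(\kappa_\sigma)_j\,e^{it\cdot\kappa_\sigma}v)$ and $\partial_j\partial_kR_{t,n}v=-R(1_{\{\sigma=n\}}(\kappa_\sigma)_j(\kappa_\sigma)_k e^{it\cdot\kappa_\sigma}v)$. The extra factor $(\kappa_\sigma)_j$ is bounded by $\psi$, and it is not bounded, so I cannot directly invoke Proposition~\ref{prop:Ruv} with $v\in\cB_1(Y)$; instead I view $1_{\{\sigma=n\}}(\kappa_\sigma)_j e^{it\cdot\kappa_\sigma}$ as the new ``$u$'' — it is constant on partition elements and lies in $L^1(Y)$ with $\|u\|_{L^1}\le\int_{\{\sigma=n\}}\psi\,d\mu_Y$. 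To control $\sum_n\int_{\{\sigma=n\}}\psi\,d\mu_Y$ and its $\sigma^k$-weighted versions, I would use Hölder: $\int_{\{\sigma=n\}}\psi\,d\mu_Y\le\|\psi\|_r\,\mu_Y(\sigma=n)^{1/r'}\ll e^{-an/r'}$ (and similarly with $(\kappa_\sigma)_j(\kappa_\sigma)_k$ replaced by something bounded by $\psi^2\in L^{r/2}$ for $r<2$, still with exponential-tail weight). This gives $\|\partial_jR_{t,n}\|_{\cB_1\to\cB_1}\ll e^{-an/r'}$ and $\|\partial_j\partial_kR_{t,n}\|_{\cB_1\to\cB_1}\ll e^{-an/r'}$ uniformly in $t$, and similarly $n^2\sigma$-type weights remain summable against the geometric factor. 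Continuity in $t$ of each term follows from dominated convergence (the integrands are continuous in $t$, dominated by $\psi\cdot|v|$ resp.\ $\psi^2|v|$ which are integrable on $\{\sigma=n\}$), and uniform convergence of the differentiated series upgrades this to joint $C^1$ regularity of the sums on $B_{1+\delta}(0)\times\R^d$. Part (c) is the special case combining one $\partial_z$-type weight ($n$) and one $\partial_j$, and the uniformity in $t$ is exactly what the Hölder bound provides since the $t$-dependence only enters through the unimodular factor $e^{it\cdot\kappa_\sigma}$, whose $t$-derivative again costs a factor of $\psi$.

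The main obstacle I anticipate is handling the two $t$-derivatives at the edge of integrability: $\psi\in L^r$ only for $r<2$, so $\psi^2\in L^{r/2}$ only for $r/2<1$, i.e.\ $\psi^2$ is \emph{not} in $L^1$ — but this is precisely why the exponential tail of $\sigma$ is essential rather than merely convenient. One must be sure that the weight $\mu_Y(\sigma=n)^{1-2/r}$ (a \emph{negative} power, hence growing) is still beaten by the $O(e^{-an})$ gain; since $2/r<1$ can be taken arbitrarily close to $1$, the net exponent $a(1-2/r)$ stays positive, so the series converges — but the bookkeeping requires choosing $r$ close enough to $2$ and keeping track of which constant depends on that choice. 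The secondary subtlety is the verification that all the relevant ``multiplier'' functions ($1_{\{\sigma=n\}}$, $(\kappa_\sigma)_j$, and their products) really are constant on partition elements of $Y$, which relies on the structural fact recalled from~\cite{SV07} that $\hat\kappa$ (hence $\kappa_\sigma$) is constant on the relevant cylinders; I would state this explicitly at the start of the proof.
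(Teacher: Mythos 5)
Your core argument is correct and is essentially the paper's: termwise, $R_{t,n}$ and $\partial_j R_{t,n}$ are bounded on $\cB_1(Y)$ via Proposition~\ref{prop:Ruv} with $u=1_{\{\sigma=n\}}e^{it\cdot\kappa_\sigma}$, respectively $u=1_{\{\sigma=n\}}(\kappa_\sigma)_j e^{it\cdot\kappa_\sigma}$ (both constant on partition elements since $\sigma$ and $\kappa_\sigma$ are), together with $\|1_{\{\sigma=n\}}\kappa_\sigma\|_1\ll e^{-an}$, which follows by H\"older from $\kappa_\sigma\in L^r(Y)$, $r<2$, and the exponential tails of $\sigma$; the polynomial weights $n^k z^{n-k}$ are then absorbed for $|z|\le 1+\delta$ with $1+\delta<e^{a'}$ for a suitable $a'>0$, and uniform convergence of the differentiated series gives (a)--(c), exactly as in the paper.

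One caveat: your treatment of the second $t$-derivatives $\partial_j\partial_k R_{t,n}$ is both unnecessary and incorrect as written. It is unnecessary because all three parts of the proposition assert only $C^1$ regularity (at most one $t$-derivative, possibly combined with $z$-derivatives), which is why the paper bounds only $\|R_{t,n}\|_{\cB_1(Y)}$ and $\|\partial_j R_{t,n}\|_{\cB_1(Y)}$. It is incorrect because $\psi^2\in L^{r/2}$ with $r/2<1$ is not a valid H\"older exponent, and the claim that $2/r<1$ can be taken close to $1$ has the inequality backwards: $r<2$ gives $2/r>1$, so the net exponent $a(1-2/r)$ you invoke is negative, not positive. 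Indeed, from the marginal tail bound $\mu_Y(\psi>m)=O(m^{-2})$ alone one cannot even conclude $\int_{\{\sigma=n\}}\psi^2\,d\mu_Y<\infty$; quantities of this second-moment type are handled in the paper only through the joint splitting over $\{\psi=m,\sigma=n\}$ in Proposition~\ref{prop:furtherR}, and even there only mixed $\partial_j\partial_z$ derivatives and H\"older-type differences in $t$ appear, never two full $t$-derivatives. Deleting that portion leaves a complete and correct proof of the proposition.
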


\begin{proof}
It suffices to show that there exist $a>0$, $C>0$ such that
\[
 \|R_{t,n}\|_{\cB_1(Y)}\le Ce^{-an},  \qquad
\|\partial_jR_{t,n}\|_{\cB_1(Y)}\le Ce^{-an}, 
\]
 for all $t\in\R^d$, $j=1,\dots,d$, $n\ge1$.

Since $\kappa_\sigma\in L^r(Y)$ for all $r<2$ and $\sigma$ has exponential tails,
there exists $a>0$ such that
$\|1_{\{\sigma=n\}} \kappa_\sigma\|_1\ll e^{-an}$.

Note that $\sigma$ and $\kappa_\sigma$ are constant on partition elements.
By Proposition~\ref{prop:Ruv},
$\|R_{t,n}\|_{\cB_1(Y)}\ll \|1_{\{\sigma=n\}}\|_1$. 
Also,
\(
\|\partial_j R_{t,n}\|_{\cB_1(Y)}
  \ll \|1_{\{\sigma=n\}}\kappa_\sigma\|_1
\)
completing the proof.
\end{proof}

For $z\in\C$ with $|z|\le1$ and $t\in\R^d$, define
\begin{align*}
\hA(z,t) & :L^1(Y)\to L^1(\Delta), \qquad \hA(z,t)v
=\sum_{n=1}^\infty z^n A_{t,n}v
\end{align*}
where 
\(
(A_{t,n}v)(y,\ell) =1_{\{\ell=n\}}(P_t^nv)(y,\ell)
=1_{\{\ell=n\}} e^{it\cdot\kappa_n(y,0)}v(y).
\)

\begin{prop} \label{prop:A}
There exists $\delta>0$ such that 
regarded as operators from $L^\infty(Y)$ to $L^1(\Delta)$,
\begin{itemize}
\item[(a)] $z\mapsto \hA(z,t)$
is analytic on $B_{1+\delta}(0)$ for all $t\in\R^d$;
\item[(b)] $(z,t)\mapsto (\partial_z\hA)(z,t)$ is $C^1$ on $B_{1+\delta}(0)\times\R^d$.
\end{itemize}
\end{prop}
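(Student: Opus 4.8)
The plan is to prove Proposition~\ref{prop:A} by establishing uniform exponential bounds on the building blocks $A_{t,n}$ and their $\partial_z$-derivatives as operators from $L^\infty(Y)$ to $L^1(\Delta)$, exactly in parallel with the proof of Proposition~\ref{prop:Rt}. The key observation is that $(A_{t,n}v)(y,\ell) = 1_{\{\ell=n\}}\,e^{it\cdot\kappa_n(y,0)}\,v(y)$, so $A_{t,n}v$ is supported on the $n$'th level of the tower $\Delta$ and, on that level, has modulus at most $1_{\{\sigma(y)>n\}}|v(y)|$ pointwise. Since the measure $\mu_\Delta$ on $\Delta$ is $(\mu_Y\times\mathrm{counting})/\bar\sigma$, integrating over $\Delta$ gives
\[
\|A_{t,n}v\|_{L^1(\Delta)} \le \bar\sigma^{-1}\int_Y 1_{\{\sigma>n\}}|v|\,d\mu_Y \le \bar\sigma^{-1}\,\mu_Y(\sigma>n)\,\|v\|_{L^\infty(Y)}.
\]
Because $\sigma$ has exponential tails, $\mu_Y(\sigma>n)=O(e^{-an})$ for some $a>0$, so $\|A_{t,n}\|_{L^\infty(Y)\to L^1(\Delta)} \le C e^{-an}$ uniformly in $t\in\R^d$ and $n\ge1$.

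For part~(a), the uniform exponential bound $\|A_{t,n}\|\le Ce^{-an}$ shows that the power series $\hA(z,t)=\sum_{n\ge1}z^n A_{t,n}$ converges in operator norm, hence defines an analytic (operator-valued) function, on the disc $B_{1+\delta}(0)$ for any $\delta<e^{a}-1$; this is uniform in $t$ since the bound is. For part~(b), differentiating termwise gives $(\partial_z\hA)(z,t)=\sum_{n\ge1}n\,z^{n-1}A_{t,n}$, and I need to control $\partial_t$ of this. The $t$-dependence sits only in the phase $e^{it\cdot\kappa_n(y,0)}$, and on the support $\{\ell=n\}\subset\{\sigma>n\}$ we have $|\kappa_n(y,0)|\le\psi(y)$ (this is precisely why $\psi$ was introduced). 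Thus $\partial_j A_{t,n}v$ has pointwise modulus at most $1_{\{\sigma>n\}}\,\psi\,|v|$ on the $n$'th level, giving
\[
\|\partial_j A_{t,n}\|_{L^\infty(Y)\to L^1(\Delta)} \le \bar\sigma^{-1}\int_Y 1_{\{\sigma>n\}}\,\psi\,d\mu_Y.
\]
Since $\psi\in L^r(Y)$ for all $r<2$ by Proposition~\ref{prop:psi} and $\sigma$ has exponential tails, a Hölder estimate $\int 1_{\{\sigma>n\}}\psi\,d\mu_Y \le \|\psi\|_r\,\mu_Y(\sigma>n)^{1-1/r}$ again yields an exponential bound $Ce^{-a'n}$ (with a smaller rate $a'=a(1-1/r)>0$). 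Combined with the factor $n$ from $\partial_z$, the series $\sum n\,z^{n-1}\partial_j A_{t,n}$ still converges uniformly (exponential beats polynomial) on $B_{1+\delta'}(0)$ for $\delta'$ small, showing $(z,t)\mapsto(\partial_z\hA)(z,t)$ has continuous partials in both $z$ and $t$, i.e.\ is $C^1$ on $B_{1+\delta}(0)\times\R^d$.

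The main subtlety to handle carefully — rather than a genuine obstacle — is making sure the $t$-derivative is legitimate: one must justify differentiating under the integral sign in $\|\cdot\|_{L^1}$ and interchanging $\partial_j$ with the infinite sum. Both follow from dominated convergence, using the uniform-in-$t$ exponential majorants just described (the bound $1_{\{\sigma>n\}}\psi|v|$ is $t$-independent and summable against $n$). A secondary point is the choice of target space: the statement uses $L^\infty(Y)$ as the source, which is what makes the pointwise-to-$L^1$ estimate clean; no H\"older control on $v$ is needed because the tower levels $\{\ell=n\}$ do not mix partition elements in a way that would force a Lipschitz seminorm. I expect the whole proof to be short, essentially a three-line estimate followed by the standard power-series/dominated-convergence bookkeeping, and it closely mirrors Proposition~\ref{prop:Rt} with $R_{t,n}$ replaced by $A_{t,n}$ and the $\cB_1(Y)$-norm replaced by the cruder $L^\infty(Y)\to L^1(\Delta)$ operator norm.
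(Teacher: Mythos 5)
Your proof is correct and follows essentially the same route as the paper: exponential bounds $\|A_{t,n}\|\ll \|1_{\{\sigma>n\}}\|_{L^1(Y)}$ and $\|\partial_j A_{t,n}\|\ll \|1_{\{\sigma>n\}}\psi\|_{L^1(Y)}$ (the latter exponential via $\psi\in L^r$, $r<2$, and the exponential tails of $\sigma$), followed by standard power-series/termwise-differentiation bookkeeping. The paper's proof is exactly this three-line estimate, with the convergence details left implicit as "as in the proof of Proposition~\ref{prop:Rt}".
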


\begin{proof} 
Let $\|\;\|$ denote $\|\;\|_{L^\infty(Y)\mapsto L^1(\Delta)}$.
As in the proof of Proposition~\ref{prop:Rt}, it suffices 
to obtain exponential estimates for 
$\|A_{t,n}\|$ and $\|\partial_j A_{t,n}\|$.

There exists $a>0$ such that
$\|1_{\{\sigma>n\}}\psi\|_{L^1(Y)}=O(e^{-an})$.
Now
$(A_{t,n}v)(y,\ell)=1_{\{\ell=n\}}e^{it\cdot\kappa_n(y)}v(y)$ so
\[
\|A_{t,n}\|\le \int_\Delta 1_{\{\ell=n\}}\,d\mu_\Delta \le \|1_{\{\sigma>n\}}\|_{L^1(Y)}.
\]
Similarly,
$\|\partial_j A_{t,n}\|  \le 
\int_\Delta 1_{\{\ell=n\}}\psi\,d\mu_\Delta \le \|1_{\{\sigma>n\}}\psi\|_{L^1(Y)}$ completing the proof.
\end{proof}

For $z\in\C$ with $|z|\le1$ and $t\in\R^d$, define
\begin{align*}
\hB(z,t) & :L^1(\Delta)\to L^1(Y), \qquad \hB(z,t)v
=\sum_{n=1}^\infty z^n B_{t,n}v
\end{align*}
where 
\[
B_{t,n}v =1_YP_t^n(1_{D_n}v), \qquad
D_n=\{(y,\sigma(y)-n):y\in Y,\,\sigma(y)>n\}.
\]

\begin{prop} \label{prop:B}
There exists $\delta>0$ such that 
regarded as operators from $\cB(\Delta)$ to $\cB_1(Y)$,
\begin{itemize}
\item[(a)] $z\mapsto \hB(z,t)$
is analytic on $B_{1+\delta}(0)$ for all $t\in\R^d$;
\item[(b)] $(z,t)\mapsto (\partial_z\hB)(z,t)$ is $C^1$ on $B_{1+\delta}(0)\times\R^d$.
\end{itemize}
\end{prop}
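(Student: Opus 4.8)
The plan is to follow the template already used for Propositions~\ref{prop:Rt} and~\ref{prop:A}. As there, it suffices to exhibit constants $a>0$, $C>0$ with
\[
\|B_{t,n}\|_{\cB(\Delta)\mapsto\cB_1(Y)}\le Ce^{-an},\qquad
\|\partial_j B_{t,n}\|_{\cB(\Delta)\mapsto\cB_1(Y)}\le Ce^{-an}
\]
for all $t\in\R^d$, $j=1,\dots,d$ and $n\ge1$. Granting these, one fixes $\delta>0$ with $(1+\delta)e^{-a}<1$; then $\sum_n z^nB_{t,n}$, its term-by-term $\partial_z$-derivative $\sum_n nz^{n-1}B_{t,n}$, and the $\partial_j$ of both converge in operator norm uniformly for $z\in B_{1+\delta}(0)$ and $t\in\R^d$, which yields (a) and (b). Continuity in $t$ of the individual terms would follow by dominated convergence: the twisting factor has modulus $1$, so $t\mapsto c_{t,n}$ is continuous into $L^1(Y)$ (dominated by $1_{\{\sigma>n\}}$), and for the $\partial_j$-terms it is dominated by $1_{\{\sigma>n\}}\psi\in L^1(Y)$.

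The one new computation is an explicit formula for $B_{t,n}$. A point of $D_n=\{(y,\sigma(y)-n):\sigma(y)>n\}$ reaches $Y\times\{0\}$ after exactly $n$ steps of $f$, the first $n-1$ of which have trivial Jacobian and the last of which is governed by the Gibbs--Markov map $F$; so, writing $(w)_a\in a$ for the preimage of $w\in Y$ under $F$,
\[
(B_{t,n}v)(w)=\suma 1_{\{\sigma(a)>n\}}\,\xi\big((w)_a\big)\,e^{it\cdot\kappa_n((w)_a,\sigma(a)-n)}\,v\big((w)_a,\sigma(a)-n\big)=\big(R(c_{t,n}\tilde v_n)\big)(w),
\]
where $\tilde v_n(y)=1_{\{\sigma(y)>n\}}\,v(y,\sigma(y)-n)$ and $c_{t,n}(y)=1_{\{\sigma(y)>n\}}\,e^{it\cdot\kappa_n(y,\sigma(y)-n)}$. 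Since $\sigma$ and $\kappa$ (on each level) are constant on partition elements, so is $\kappa_n(\,\cdot\,,\sigma(\cdot)-n)$ on $\{\sigma>n\}$; hence $c_{t,n}$ is constant on partition elements, $|c_{t,n}|\le1$ and $\supp c_{t,n}\subset\{\sigma>n\}$. I would also check that $\tilde v_n\in\cB_1(Y)$ with $\|\tilde v_n\|_{\cB_1(Y)}\ll\|v\|_{\cB(\Delta)}$: the sup bound is immediate; the $d_\theta$-Hölder bound within a single partition element is exactly the ``same-$\ell$'' Hölder seminorm in the definition of $\cB(\Delta)$; and for $y,y'$ in different partition elements $d_\theta(y,y')=1$, so the sup bound suffices.

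Proposition~\ref{prop:Ruv} then gives $\|B_{t,n}v\|_{\cB_1(Y)}=\|R(c_{t,n}\tilde v_n)\|_{\cB_1(Y)}\ll\|c_{t,n}\|_1\,\|\tilde v_n\|_{\cB_1(Y)}\ll\mu_Y(\sigma>n)\,\|v\|_{\cB(\Delta)}$, and $\mu_Y(\sigma>n)=O(e^{-an})$ by the exponential tail hypothesis; this is the first bound. For the second, differentiating under $R$ yields $\partial_j B_{t,n}v=R\big((\partial_j c_{t,n})\tilde v_n\big)$, where $\partial_j c_{t,n}$ equals $c_{t,n}$ times $i$ times the $j$th component of $\kappa_n(\,\cdot\,,\sigma(\cdot)-n)$ — again constant on partition elements — and on $\{\sigma>n\}$ one has $|\kappa_n(y,\sigma(y)-n)|\le\psi(y)$ by definition of $\psi$. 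Hence $\|\partial_j c_{t,n}\|_1\le\|1_{\{\sigma>n\}}\psi\|_{L^1(Y)}=O(e^{-an})$ — precisely the estimate already used in the proof of Proposition~\ref{prop:A} — and Proposition~\ref{prop:Ruv} gives $\|\partial_j B_{t,n}\|_{\cB(\Delta)\mapsto\cB_1(Y)}\ll\|1_{\{\sigma>n\}}\psi\|_1\ll e^{-an}$. The only real obstacle is minor: the bookkeeping in the explicit formula for $B_{t,n}$ (which iterate carries the $F$-Jacobian, and the bound $|\kappa_n|\le\psi$ on $D_n$) and the verification that a level-slice restriction of a $\cB(\Delta)$-function lies in $\cB_1(Y)$ with comparable norm; once these are in place the argument is structurally identical to Propositions~\ref{prop:Rt} and~\ref{prop:A}.
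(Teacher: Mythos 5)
Your proposal is correct and follows essentially the same route as the paper: the paper also writes $B_{t,n}v=R(u_{t,n}v_n)$ with $u_{t,n}=1_{\{\sigma>n\}}e^{it\cdot\kappa_n(\cdot,\sigma(\cdot)-n)}$ constant on partition elements and $\|v_n\|_{\cB_1(Y)}\le\|v\|_{\cB}$, then applies Proposition~\ref{prop:Ruv} to get $\|B_{t,n}\|\ll\|1_{\{\sigma>n\}}\|_1$ and $\|\partial_j B_{t,n}\|\ll\|1_{\{\sigma>n\}}\psi\|_1=O(e^{-an})$. Your extra bookkeeping (the explicit derivation of the formula and the bound $|\kappa_n|\le\psi$ on $D_n$) just makes explicit what the paper leaves implicit.
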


\begin{proof} 
Let $\|\;\|$ denote $\|\;\|_{\cB(\Delta)\mapsto \cB_1(Y)}$.
Again, it suffices to obtain exponential estimates for 
$\|B_{t,n}\|$ and
$\|\partial_j B_{t,n}\|$.

We can write
$B_{t,n}v=R(u_{t,n}v_n)$ where
\[
u_{t,n}(y)=1_{\{\sigma(y)>n\}}e^{it\cdot\kappa_n(y,\sigma(y)-n)},
\qquad v_n(y)=1_{\{\sigma(y)>n\}}v(y,\sigma(y)-n).
\]
Note that $u_{t,n}$ is constant on partition elements and
$\|v_n\|_{\cB_1(Y)}\le \|v\|_\cB $.
Also, there exists $a>0$ such that
$\|1_{\{\sigma>n\}}\psi\|_{L^1(Y)}=O(e^{-an})$.

By Proposition~\ref{prop:Ruv},
\[
\|B_{t,n}\|\ll \|u_{t,n}\|_{L^1(Y)}=\|1_{\{\sigma>n\}}\|_{L^1(Y)}.
\]
Similarly,
$\|\partial_j B_{t,n}\|  \ll \|1_{\{\sigma>n\}}\psi\|_{L^1(Y)}$
completing the proof.
\end{proof}

For $z\in\C$ with $|z|\le1$ and $t\in\R^d$, define
\begin{align*}
\hE(z,t) & :\cB(\Delta)\to L^1(\Delta), \qquad \hE(z,t)v
=\sum_{n=1}^\infty z^n E_{t,n}v
\end{align*}
where 
\(
(E_{t,n}v)(y,\ell) =1_{\{\ell>n\}}(P_t^nv)(y,\ell).
\)

\begin{prop} \label{prop:E}
There exists $\delta>0$ such that 
regarded as operators from $\cB(\Delta)$ to $L^1(\Delta)$,
\begin{itemize}
\item[(a)] $z\mapsto \hE(z,t)$
is analytic on $B_{1+\delta}(0)$ for all $t\in\R^d$;
\item[(b)] $(z,t)\mapsto \hE(z,t)$ is $C^0$ on $B_{1+\delta}(0)\times\R^d$;
\end{itemize}
\end{prop}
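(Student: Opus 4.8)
The plan is to mimic the proofs of Propositions~\ref{prop:A} and~\ref{prop:B}: reduce each of the two claims about the series $\hE(z,t)=\sum_{n\ge1} z^nE_{t,n}$ to suitable uniform estimates on $\|E_{t,n}\|$, where $\|\;\|$ now denotes $\|\;\|_{\cB(\Delta)\mapsto L^1(\Delta)}$. Since $(E_{t,n}v)(y,\ell)=1_{\{\ell>n\}}(P_t^nv)(y,\ell)$ and $|P_t^nv|=|e^{it\cdot\kappa_n}v|=|v|$ pointwise on the relevant tower fibres, we have the crude bound
\[
\|E_{t,n}v\|_{L^1(\Delta)}\le \int_\Delta 1_{\{\ell>n\}}|v|\,d\mu_\Delta \le \|v\|_\infty\,\mu_\Delta(\ell>n)\ll \|v\|_{\cB(\Delta)}\,\|1_{\{\sigma>n\}}\|_{L^1(Y)},
\]
using that $\mu_\Delta(\ell>n)=\bar\sigma^{-1}\sum_{m>n}\mu_Y(\sigma>m)$, which decays exponentially because $\sigma$ has exponential tails. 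Thus there exist $a>0$, $C>0$ with $\|E_{t,n}\|\le Ce^{-an}$ for all $t\in\R^d$, $n\ge1$, and this bound is uniform in $t$.

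Given this, part~(a) is immediate: $\sum_{n\ge1}|z|^n\|E_{t,n}\|\le C\sum_n(1+\delta)^ne^{-an}<\infty$ once $\delta>0$ is chosen with $(1+\delta)e^{-a}<1$, so the series defining $\hE(z,t)$ converges absolutely and uniformly on $\overline{B_{1+\delta}(0)}$, hence is analytic in $z$ there (each $E_{t,n}$ is a fixed bounded operator, so the partial sums are polynomials in $z$, and a locally uniform limit of analytic operator-valued functions is analytic). For part~(b), the joint continuity in $(z,t)$ on $B_{1+\delta}(0)\times\R^d$ follows from the same Weierstrass argument once one checks that each term $z^nE_{t,n}$ is jointly continuous: continuity in $z$ is clear, and continuity in $t$ of $E_{t,n}:\cB(\Delta)\to L^1(\Delta)$ follows from dominated convergence applied to $(E_{t,n}v)(y,\ell)=1_{\{\ell=n\}\cup\{\ell>n\}\ \text{resp.}\ \{\ell>n\}}e^{it\cdot\kappa_n(y,0)}v(y)$, since $t\mapsto e^{it\cdot\kappa_n}$ is continuous pointwise and dominated by the integrable function $1_{\{\ell>n\}}|v|$. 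The uniform-in-$n$ exponential bound then transfers joint continuity from the partial sums to the sum.

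I do not expect a serious obstacle here; unlike Lemma~\ref{lem:key}, no $C^1$ regularity in $t$ is being asserted for $\hE$ (only $C^0$), which is exactly why the crude pointwise estimate $|P_t^nv|=|v|$ suffices and one never needs to differentiate through $e^{it\cdot\kappa_n}$ — the $\kappa_n$ factor, which would cost an unbounded multiplier, never appears. The only mild point to be careful about is the choice of function spaces: the bound must genuinely land in $L^1(\Delta)$ using only $\|v\|_\infty\le\|v\|_{\cB(\Delta)}$, and the exponential decay of $\mu_\Delta(\ell>n)$ must be invoked as coming from the exponential tails of $\sigma$ (equivalently, from $\|1_{\{\sigma>n\}}\|_{L^1(Y)}=O(e^{-an})$, already used in Propositions~\ref{prop:A} and~\ref{prop:B}). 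With those two observations in place the proof is a routine repetition of the pattern established for $\hA$ and $\hB$.
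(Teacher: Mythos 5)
Your proof is correct and follows essentially the same route as the paper: reduce both claims to a uniform exponential estimate $\|E_{t,n}\|_{\cB(\Delta)\mapsto L^1(\Delta)}\le \mu_\Delta(\ell>n)\ll e^{-an}$ coming from the exponential tails of $\sigma$, then sum the series on $B_{1+\delta}(0)$ with $(1+\delta)e^{-a}<1$. (One minor slip: for $\ell>n$ the correct formula is $(E_{t,n}v)(y,\ell)=1_{\{\ell>n\}}e^{it\cdot\kappa_n(y,\ell-n)}v(y,\ell-n)$ rather than involving $e^{it\cdot\kappa_n(y,0)}v(y)$, but this does not affect your estimates since you only use $|e^{it\cdot\kappa_n}|=1$ and $\|v\|_\infty\le\|v\|_{\cB(\Delta)}$.)
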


\begin{proof}
Let $\|\;\|$ denote $\|\;\|_{\cB(\Delta)\mapsto L^1(Y)}$.
It suffices to obtain an exponential estimate for
$\|E_{t,n}\|$.
But $(E_{t,n}v)(y,\ell)=1_{\{\ell>n\}}e^{it\cdot\kappa_n(y,\ell-n)}v(y,\ell-n)$,
so 
\[
\|E_{t,n}\|\le
\int_\Delta 1_{\{\ell>n\}}\,d\mu_\Delta
\le \|\sigma\,1_{\{\sigma>n\}}\|_{L^1(Y)} 
\ll e^{-\eps n}
\]
as required.
\end{proof}

\subsection{Further estimates}

In this subsection, we exploit the fact (Proposition~\ref{prop:psi}) that
$\mu_Y(\psi>n)=O(n^{-2})$.

\begin{prop} \label{prop:furtherR}
There exist $C>0$, $\delta>0$, $b>0$ such that
\[
\|\partial_j\partial_z \hR(z,t+h)
-\partial_j\partial_z \hR(z,t)\|_{\cB_1(Y)}
  \le C |h| L(h)^2\big\{1
+ |h|^{-b\log |z|}L(h)(|z|-1)\big\},
\]
for all
$t,h\in B_\delta(0)$, all $z\in\C$ with $1\le|z|\le 1+\delta$,
and all $j=1,\dots,d$.
\end{prop}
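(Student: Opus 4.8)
The plan is to reduce everything to a single kernel estimate at the level of the Gibbs--Markov base $Y$. Recall that $\partial_z\hR(z,t) = \sum_{n\ge1} n z^{n-1} R_{t,n}$ where $R_{t,n}v = R(1_{\{\sigma=n\}} e^{it\cdot\kappa_\sigma} v)$, and differentiating in $t_j$ brings down a factor $i\kappa_{\sigma,j}$ (the $j$th component of $\kappa_\sigma$), so $\partial_j\partial_z\hR(z,t)v = \sum_{n\ge1} n z^{n-1} R(1_{\{\sigma=n\}}\, i\kappa_{\sigma,j}\, e^{it\cdot\kappa_\sigma} v)$. The difference $\partial_j\partial_z\hR(z,t+h) - \partial_j\partial_z\hR(z,t)$ then has the same form but with $e^{it\cdot\kappa_\sigma}$ replaced by $e^{it\cdot\kappa_\sigma}(e^{ih\cdot\kappa_\sigma}-1)$. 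By Proposition~\ref{prop:Ruv}, since $\sigma$, $\kappa_\sigma$ and hence $e^{it\cdot\kappa_\sigma}(e^{ih\cdot\kappa_\sigma}-1)$ are constant on partition elements,
\[
\|\partial_j\partial_z\hR(z,t+h) - \partial_j\partial_z\hR(z,t)\|_{\cB_1(Y)}
\ll \sum_{n\ge1} n\,|z|^{n-1}\,\big\| 1_{\{\sigma=n\}}\,|\kappa_\sigma|\,\big|e^{ih\cdot\kappa_\sigma}-1\big|\,\big\|_{L^1(Y)}.
\]
So the whole statement reduces to bounding this one series, uniformly in $t$, by $|h|L(h)^2\{1 + |h|^{-b\log|z|}L(h)(|z|-1)\}$.

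First I would split the $L^1$ norm using the elementary bound $|e^{ih\cdot\kappa_\sigma}-1|\le \min\{2,|h||\kappa_\sigma|\}\le\min\{2,|h|\psi\}$, together with $|\kappa_\sigma|\le\psi$. This gives
\[
\big\|1_{\{\sigma=n\}}|\kappa_\sigma|\,|e^{ih\cdot\kappa_\sigma}-1|\big\|_{L^1}
\le \int_{\{\sigma=n\}} \psi\,\min\{2,|h|\psi\}\,d\mu_Y,
\]
and I would break the region $\{\sigma=n\}$ according to whether $\psi\le 1/|h|$ or $\psi>1/|h|$. On $\{\psi\le 1/|h|\}$ one uses $\psi\min\{2,|h|\psi\}\le |h|\psi^2$; on $\{\psi>1/|h|\}$ one uses $\psi\min\{2,|h|\psi\}\le 2\psi$. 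Now summing over $n$ with the weights $n|z|^{n-1}$ is where the two mechanisms appear. For the ``tame'' part (no $\psi$-tail truncation) I would sum $\sum_n n|z|^{n-1}\int_{\{\sigma=n\}}(\cdots)\,d\mu_Y$; writing $n|z|^{n-1}$ and using that $\sigma$ has exponential tails while $\psi\in L^r$ for all $r<2$, the key integrals are $\int \psi^2 1_{\{1\le\psi\le 1/|h|\}}\,d\mu_Y = O(\log(1/|h|)) = O(L(h))$ (using $\mu_Y(\psi>m)=O(m^{-2})$, Proposition~\ref{prop:psi}) and $\int \psi\, 1_{\{\psi>1/|h|\}}\,d\mu_Y = O(|h|L(h))$ by the same tail bound (dyadic decomposition of $\{2^k<\psi\le 2^{k+1}\}$ for $2^k>1/|h|$). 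Multiplying by the factor $|h|$ from the first region gives $|h|L(h)$ in both cases; tracking one extra logarithmic loss from the interplay of the $\sigma$-tail with the $\psi$-truncation (the worst terms have $n\asymp\log(1/|h|)$, contributing the second power of $L(h)$) yields the $|h|L(h)^2$ term when $|z|=1$.

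For the $|z|>1$ correction, the point is that $|z|^{n-1}=e^{(n-1)\log|z|}$ amplifies the large-$n$ terms, and $|h|^{-b\log|z|}$ is exactly $e^{b\log|z|\,\log(1/|h|)}$; so I would isolate the range $n\gtrsim \log(1/|h|)$, where the $\psi$-tail truncation forces $\sigma=n$ large, pick up a factor $e^{cn\log|z|}$ bounded for $n$ up to $O(\log(1/|h|))$ by $|h|^{-b\log|z|}$, and bound the geometric-type tail beyond that by $(|z|-1)^{-1}$ against the exponential $\sigma$-decay --- but since we want a factor $(|z|-1)$ \emph{multiplying}, the cleaner route is: write $\sum_{n\ge n_0} n|z|^{n-1} e^{-an} \le C |z|^{n_0} \le C|h|^{-b\log|z|}$ for $n_0\asymp L(h)$, and observe the prefactor from this range carries an extra $(|z|-1)$ because the truncated-$\psi$ contribution at scale $n$ is $O(|h|\cdot 1) = O(|h|)$ summed against $n|z|^{n-1}$ which, relative to the $|z|=1$ value, gains exactly a $(|z|-1)L(h)$ from the derivative of the geometric series --- combining gives $|h|^{-b\log|z|}L(h)(|z|-1)$ after one more $|h|L(h)$ factored out. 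The main obstacle, and the place requiring genuine care rather than routine estimation, is precisely this bookkeeping: getting the powers of $L(h)$ right (no worse than $L(h)^2$ and $L(h)$ in the two terms) while simultaneously extracting the clean $|h|^{-b\log|z|}(|z|-1)$ structure, since naive bounds lose logs or produce $(|z|-1)^{-1}$ instead of $(|z|-1)$. This is handled by choosing the cutoff $n_0\asymp\log(1/|h|)$ and treating $n\le n_0$ and $n>n_0$ by the two different $\psi$-truncations above, so that each regime's worst term is controlled sharply.
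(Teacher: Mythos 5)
Your proposal is correct and follows essentially the same route as the paper: reduce via Proposition~\ref{prop:Ruv} to bounding $\int_Y \sigma|z|^\sigma\,\psi\min\{|h|\psi,1\}\,d\mu_Y$, kill the large-$\sigma$ part using the exponential tails of $\sigma$, extract the $(|z|-1)$ factor from $|z|^n-1\le n|z|^{n-1}(|z|-1)$ on the small-$\sigma$ part, and use $\mu_Y(\psi>m)=O(m^{-2})$ with the split at $\psi=1/|h|$. The only (immaterial) difference is your cutoff $n_0\asymp\log(1/|h|)$ where the paper cuts at $n>b\log\psi$; your description of the $n>n_0$ tail is a little muddled (no $(|z|-1)$ factor is needed there — exponential smallness $\ll|h|$ suffices), but the estimates all close.
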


\begin{proof}
In this argument, we take $|x|=\max_{j=1,\dots,d}|x_j|$ on $\R^d$ so that $\psi$ is integer-valued. 
Now,
$\partial_j\partial_z \hR(z,t)v=iR((\kappa_\sigma)_j
e^{it\cdot\kappa_\sigma} \sigma z^{\sigma-1}v)$.
By Proposition~\ref{prop:Ruv},
\begin{align*}
\|\partial_j\partial_z \hR(z,t+h)
-\partial_j\partial_z \hR(z,t)\|_{\cB_1(Y)}
& \ll \int_Y |\kappa_\sigma||e^{ih\cdot\kappa_\sigma}-1|\sigma |z|^\sigma\,d\mu_Y
\\ & \le 2\int_Y \psi\min\{|h|\psi,1\}\sigma |z|^\sigma\,d\mu_Y
 =2 \sum_{m,n=1}^\infty r_{m,n}
\end{align*}
where
\[
r_{m,n} =\mu_Y\big(\psi=m,\sigma=n\big ) mn \min\{|h|m,1\}|z|^n.
\]
Recall that $\mu_Y(\sigma=n)=O(e^{-an})$ for some $a>0$.
Fix $a_1\in(0,a)$ and $\delta>0$
so that $e^{-a}(1+\delta)< e^{-a_1}$.
Then 
\[
r_{m,n}\ll |h|m^2 ne^{-an}|z|^n\ll |h|m^2e^{-a_1n}.
\]
Fixing $b>0$ sufficiently large,
\[
\sum_{n>b\log m} r_{m,n} \ll |h|m^2 e^{-a_1b\log m}(1-e^{-a_1})^{-1}
\ll |h|m^2 m^{-a_1b}\le |h|m^{-2}.
\]
Hence
\(
\sum_{m=1}^\infty \sum_{n>b\log m} r_{m,n} \ll |h|.
\)

\vspace{1ex}
It remains to consider the terms with $n\le b\log m$.
Now
\[
|z|^n = 1+ (|z|^n-1) \le 1+ n|z|^{n-1}(|z|-1)
\ll 1 +(\log m) m^{b\log |z|}(|z|-1).
\]
Hence
\[
r_{m,n}\ll \mu_Y(\psi=m,\sigma=n)m(\log m)\min\big\{|h|m,1\}\{1+(\log m) m^{b\log |z|}(|z|-1)\big\}.
\]
and so
\[
\sum_{n\le b\log m}r_{m,n}\ll \mu_Y(\psi=m)m\min\big\{|h|m,1\}\{\log m+(\log m)^2 m^{b\log |z|}(|z|-1)\big\}.
\]
Let $K=[1/|h|]\ge1$.  Then for $m\le K$,
\[
\sum_{n\le b\log m}r_{m,n}\ll \mu_Y(\psi=m)|h|m^2\{\log K+(\log K)^2K^{b\log |z|}(|z|-1)\big\},
\]
and so by resummation,
\begin{align} \nonumber
\sum_{m=1}^K\sum_{n\le b\log m}r_{m,n} & 
\ll |h| \sum_{m=1}^K\mu_Y(\psi=m)m^2\{\log K+(\log K)^2K^{b\log |z|}(|z|-1)\big\}
\\ & \nonumber
\ll |h| (\log K)^2+ |h|(\log K)^3K^{b\log |z|}(|z|-1)
\\ & \ll |h| L(h)^2\big\{1+ |h|^{-b\log|z|}L(h)(|z|-1)\big\}.
\label{eq:est1}
\end{align}

Next,
\begin{align*}
\sum_{m>K}\sum_{n\le b\log m}r_{m,n} & 
\ll \sum_{m>K}\mu_Y(\psi=m)m(\log m)
\\ & \qquad\qquad + (|z|-1)\sum_{m>K}\mu_Y(\psi=m)m^{1+b\log|z|}(\log m)^2.
\end{align*}
Now,
\begin{align*}
\sum_{m>K} & \mu_Y(\psi=m)m^{1+b\log|z|}(\log m)^2
\\ & = 
\sum_{m>K}\mu_Y(\psi\ge m)m^{1+b\log|z|}(\log m)^2
-\sum_{m>K}\mu_Y(\psi> m)m^{1+b\log|z|}(\log m)^2
\\ & \le  \mu_Y(\psi> K)K^{1+b\log|z|}(\log K)^2
\\ & \qquad +
\sum_{m>K}\mu_Y(\psi\ge m)(m^{1+b\log|z|}(\log m)^2
-(m-1)^{1+b\log|z|}(\log (m-1))^2
\\ & \ll 
K^{b\log|z|-1}(\log K)^2+
(1+b\log|z|)
\sum_{m>K}\mu_Y(\psi\ge m) m^{b\log|z|}(\log m)^2
\\ & \ll 
|h|^{1-b\log|z|}L(h)^2+
\sum_{m>K} m^{b\log|z|-2}(\log m)^2.
\end{align*}
By Karamata,
\begin{align*}
\sum_{m>K} m^{b\log|z|-2}(\log m)^2
& \ll (1-b\log |z|)^{-1}K^{b\log|z|-1}(\log K)^2
\ll |h|^{1-b\log|z|}L(h)^2.
\end{align*}
Hence
\[
\sum_{m>K}\sum_{n\le b\log m} r_{m,n}
\ll |h|L(h)\big\{1+ |h|^{-b\log|z|}L(h)(|z|-1)\big\}.
\]
This combined with~\eqref{eq:est1} gives the desired estimate for
$\sum_{m\ge1}\sum_{n\le b\log m} r_{m,n}$, completing the proof.
\end{proof}

\begin{rmk} \label{rmk:furtherR}
Similarly,
\[
\|\partial_j\hR(z,t+h)
-\partial_j\hR(z,t)\|_{\cB_1(Y)}
  \le C |h| L(h)\big\{1
+ |h|^{-b\log |z|}L(h)(|z|-1)\big\}.
\]
\end{rmk}

\begin{prop} \label{prop:furtherA}
There exist $C>0$, $\delta>0$, $b>0$ such that
\[
\|\partial_j \hA(z,t+h)
-\partial_j \hA(z,t)\|_{\cB_1(Y)\mapsto L^1(\Delta)}
  \le C |h| L(h)\big\{1 + |h|^{-b\log |z|}L(h)(|z|-1)\big\},
\]
for all
$t,h\in B_\delta(0)$, all $z\in\C$ with $1\le|z|\le 1+\delta$,
and all $j=1,\dots,d$. 
\end{prop}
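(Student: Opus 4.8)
The plan is to follow the template of the proof of Proposition~\ref{prop:furtherR} (and of Remark~\ref{rmk:furtherR}), turning the operator bound into a scalar integral over $Y$. Since $\cB_1(Y)$ embeds into $L^\infty(Y)$, it suffices to estimate the norm of $\partial_j\hA(z,t+h)-\partial_j\hA(z,t)$ as an operator from $L^\infty(Y)$ to $L^1(\Delta)$, so I may work with $\|v\|_\infty$ throughout. First I would note that in $\hA(z,t)v(y,\ell)=\sum_{n\ge1}z^n(A_{t,n}v)(y,\ell)$ only the term $n=\ell$ survives, so for $1\le\ell\le\sigma(y)-1$,
\[
\big(\partial_j\hA(z,t+h)v-\partial_j\hA(z,t)v\big)(y,\ell)
=z^\ell\, i\,(\kappa_\ell(y,0))_j\,e^{it\cdot\kappa_\ell(y,0)}\big(e^{ih\cdot\kappa_\ell(y,0)}-1\big)\,v(y).
\]
Taking $|x|=\max_j|x_j|$ on $\R^d$ as in Proposition~\ref{prop:furtherR} (so $\kappa$ is integer-valued; the discrepancy with $|h\cdot\kappa_\ell|$ is harmless since $d\le2$), using $|e^{ih\cdot\kappa_\ell(y,0)}-1|\le 2\min\{|h|\,|\kappa_\ell(y,0)|,1\}$ and integrating over $\Delta$ (where $\mu_\Delta=(\mu_Y\times\mathrm{counting})/\bar\sigma$) gives
\[
\|\partial_j\hA(z,t+h)v-\partial_j\hA(z,t)v\|_{L^1(\Delta)}
\ll \|v\|_\infty\int_Y\ \sum_{\ell=0}^{\sigma(y)-1}|z|^\ell\,|\kappa_\ell(y,0)|\min\{|h|\,|\kappa_\ell(y,0)|,1\}\ d\mu_Y .
\]

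The feature that distinguishes this from Proposition~\ref{prop:furtherR} is the factor $\sum_{\ell<\sigma}|z|^\ell$ produced by the $L^1(\Delta)$-norm, together with the \emph{absence} of a $\partial_z$-differentiation (in Proposition~\ref{prop:furtherR} the $\partial_z$ contributed the extra factor $\sigma\sim\log m$ responsible for the $L(h)^2$ there). The plan is to absorb the inner sum into a single function: since $|z|^\ell\le|z|^\sigma$ for $|z|\ge1$, $|\kappa_\ell(y,0)|\le\Theta(y):=\sum_{m=0}^{\sigma(y)-1}|\kappa_m(y,0)|$, and $x\mapsto\min\{|h|x,1\}$ is nondecreasing,
\[
\sum_{\ell=0}^{\sigma-1}|z|^\ell|\kappa_\ell(y,0)|\min\{|h|\,|\kappa_\ell(y,0)|,1\}
\ \le\ |z|^\sigma\,\Theta(y)\min\{|h|\,\Theta(y),1\}.
\]
Here $\Theta(y)\le\sum_{\ell<\sigma}(\sigma-\ell)|\kappa(y,\ell)|\le\sigma(y)\psi(y)$, and the key input — beyond Proposition~\ref{prop:psi} in its stated form — is the tail bound $\mu_Y(\Theta>m)=O(m^{-2})$, i.e.\ the \emph{same} exponent as for $\psi$. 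This should follow from the same structural properties of the infinite-horizon dispersing billiard that underlie Proposition~\ref{prop:psi} (cf.\ \cite[Lemma~16]{SV07}, \cite[Lemma~5.1]{ChernovZhang08}): an excursion accumulating displacement of order $m$ does so essentially through a single long flight, while $\mu_Y(\sigma=n)=O(e^{-an})$, so the contributions with $\sigma$ large to $\sum_{\ell<\sigma}(\sigma-\ell)|\kappa(y,\ell)|$ are exponentially suppressed and, after summation over $\sigma$, leave the $m^{-2}$ tail of a single long flight intact. I expect verifying this tail estimate to be the main obstacle; once it is in hand the rest is the bookkeeping of Section~\ref{sec:pfkey}.

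Granting $\mu_Y(\Theta>m)=O(m^{-2})$, it remains to bound $\int_Y|z|^\sigma\,\Theta\min\{|h|\Theta,1\}\,d\mu_Y$, which I would do exactly as in the proof of Proposition~\ref{prop:furtherR}, with $\psi$ replaced by $\Theta$ and with the $\partial_z$-factor ($n$ in $r_{m,n}$) deleted. Writing the integral as $\sum_{m,n}\mu_Y(\Theta=m,\sigma=n)\,m\min\{|h|m,1\}\,|z|^n$, split into $n>b\log m$ — where $\mu_Y(\Theta=m,\sigma=n)\le\mu_Y(\sigma=n)=O(e^{-an})$ and $\min\{|h|m,1\}\le|h|m$ give, for $b$ large, a contribution $\ll|h|\sum_m m^{-2}\ll|h|L(h)$ — and $n\le b\log m$ — where $|z|^n\le 1+(\log m)m^{b\log|z|}(|z|-1)$, and summing over $m\le K:=[1/|h|]$ via Karamata ($\sum_{m\le K}\mu_Y(\Theta=m)m^2\ll\log K$) together with the tail part $m>K$ handled as in Proposition~\ref{prop:furtherR} yields $\ll|h|L(h)\{1+|h|^{-b\log|z|}L(h)(|z|-1)\}$ after shrinking $\delta$ so that $b\log|z|<1$. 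Since the $m$-summation produces only one power of $\log K$ and there is no $\partial_z$-factor to produce a second, the total is $C|h|L(h)\{1+|h|^{-b\log|z|}L(h)(|z|-1)\}$, as asserted, and an entirely analogous (slightly simpler) argument gives the companion estimate for $\partial_j\hB$.
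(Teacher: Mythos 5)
Your reduction to an $L^\infty(Y)\to L^1(\Delta)$ bound and the pointwise formula for $\partial_j\hA(z,t+h)v-\partial_j\hA(z,t)v$ agree with the paper's starting point. The divergence is that you keep the level sum $\sum_{\ell<\sigma}$ explicitly and propose to control it through $\Theta(y)=\sum_{\ell<\sigma(y)}|\kappa_\ell(y,0)|$, whereas the paper simply bounds $|\kappa_\ell(y,0)|\le\psi(y)$, passes to $\big\||z|^\sigma\psi\min\{|h|\psi,1\}\big\|_{L^1(Y)}=\sum_{m,n}\mu_Y(\psi=m,\sigma=n)\,m\,|z|^n\min\{|h|m,1\}$, and then repeats the computation of Proposition~\ref{prop:furtherR} verbatim with one fewer factor of $n$; no new tail estimate beyond Proposition~\ref{prop:psi} is introduced there.

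The genuine gap in your proposal is exactly the step you flag yourself: the tail bound $\mu_Y(\Theta>m)=O(m^{-2})$ is asserted, not proved, and it does not follow from anything available in the paper. From Proposition~\ref{prop:psi} together with the exponential tails of $\sigma$ one only gets $\Theta\le\sigma\psi$ and hence $\mu_Y(\Theta>m)=O\big(m^{-2}(\log m)^2\big)$; with that weaker input your own bookkeeping (the resummation $\sum_{m\le K}\mu_Y(\Theta=m)m^2$) produces an extra factor of $L(h)$, i.e.\ only the bound $|h|L(h)^2\{1+|h|^{-b\log|z|}L(h)(|z|-1)\}$ of Proposition~\ref{prop:furtherR}, which is precisely what Proposition~\ref{prop:furtherA} is meant to improve upon (and the improvement is what ultimately yields $\log|N|$ rather than $(\log|N|)^2$ in Theorem~\ref{thm:lld}). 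To get $O(m^{-2})$ for $\Theta$ you would need a joint (conditional) tail estimate for the pair $(\psi,\sigma)$ on an excursion — roughly, that given a free flight of length of order $m$, the remaining number of collisions in the excursion has uniformly exponential tails — and this is a billiard-specific statement that is not contained in \cite[Lemma~16]{SV07} or \cite[Lemma~5.1]{ChernovZhang08} as they are used in the paper; your heuristic ("the $m^{-2}$ tail of a single long flight is left intact after summation over $\sigma$") is plausible but is the whole content of the missing step, so as written the argument is incomplete at its pivotal point.
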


\begin{proof}
We have
\[
(\hA(z,t)v)(y,\ell)=\sum_{n=1}^\infty z^n1_{\{\ell=n\}}e^{it\cdot\kappa_n(y,0)}v(y)
=z^\ell e^{it\cdot\kappa_\ell(y,0)}v(y).
\]
Hence
\begin{align*}
\|\partial_j  \hA(z,t+h)- \partial_j  \hA(z,t)\|_{\cB_1(Y)\mapsto L^1(\Delta)}
& \ll \big\||z|^\sigma \psi\min\{|h|\psi,1\} \big\|_{L^1(Y)}
\\ & =\sum_{m,n=1}^\infty \mu_Y\big(\psi=m,\sigma=n\big)m|z|^n\min\{|h|m,1\}.
\end{align*}
We now proceed as in the proof of
Proposition~\ref{prop:furtherR}, except that
there is one less factor of $n$ (hence one less factor of $L(h)$).
\end{proof}

\begin{prop} \label{prop:furtherB}
There exist $C>0$, $\delta>0$, $b>0$ such that
\[
\|\partial_j \hB(z,t+h)
-\partial_j \hB(z,t)\|_{\cB(\Delta)\mapsto \cB_1(Y)}
  \le C |h| L(h) \big\{1
+ |h|^{-b\log |z|}L(h)(|z|-1)\big\},
\]
for all
$t,h\in B_\delta(0)$, all $z\in\C$ with $1\le|z|\le 1+\delta$,
and all $j=1,\dots,d$, 
\end{prop}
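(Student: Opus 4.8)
\noindent The plan is to strip off the $t_j$-derivative, reduce the resulting operator to the form $R(uv)$ with $u$ constant on partition elements exactly as in the proof of Proposition~\ref{prop:B}, and then recognise the scalar sum that remains as the one estimated in the proof of Proposition~\ref{prop:furtherA}.

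First I would recall the factorisation $B_{t,n}v=R(u_{t,n}v_n)$ from the proof of Proposition~\ref{prop:B}, where $u_{t,n}(y)=1_{\{\sigma(y)>n\}}e^{it\cdot\kappa_n(y,\sigma(y)-n)}$ is constant on partition elements and $\|v_n\|_{\cB_1(Y)}\le\|v\|_{\cB(\Delta)}$. Since $\partial_j u_{t,n}=i(\kappa_n(\cdot,\sigma-n))_j\,u_{t,n}$ is again constant on partition elements, termwise differentiation gives $\partial_j B_{t,n}v=R\big(i(\kappa_n(\cdot,\sigma-n))_j\,u_{t,n}\,v_n\big)$, and subtracting the values at $t+h$ and $t$ replaces the leading factor by $i(\kappa_n(\cdot,\sigma-n))_j\,u_{t,n}\big(e^{ih\cdot\kappa_n(\cdot,\sigma-n)}-1\big)1_{\{\sigma>n\}}$, still constant on partition elements. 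Applying Proposition~\ref{prop:Ruv}, using $\|v_n\|_{\cB_1(Y)}\le\|v\|_{\cB(\Delta)}$ and summing the $z^n$-series yields
\[
\|\partial_j\hB(z,t+h)-\partial_j\hB(z,t)\|_{\cB(\Delta)\mapsto\cB_1(Y)}\ll\sum_{n\ge1}|z|^n\int_Y 1_{\{\sigma>n\}}\,|\kappa_n(\cdot,\sigma-n)|\,\min\{|h|\,|\kappa_n(\cdot,\sigma-n)|,2\}\,d\mu_Y .
\]

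Next I would use $|\kappa_n(y,\sigma(y)-n)|\le\psi(y)$ (it is a partial sum of the $|\kappa(y,\ell)|$), interchange the $n$-sum with the integral, and note that $\sum_{1\le n<\sigma}|z|^n$ contributes, for $1\le|z|\le1+\delta$, only a weight of the same type as the factor $|z|^\sigma$ appearing in the proof of Proposition~\ref{prop:furtherA}, the extra polynomial-in-$\sigma$ factor being harmless once $n$ is split against $b\log m$. Thus the right-hand side is dominated by a constant times $\||z|^\sigma\psi\min\{|h|\psi,1\}\|_{L^1(Y)}=\sum_{m,n\ge1}\mu_Y(\psi=m,\sigma=n)\,m\,|z|^n\min\{|h|m,1\}$, which is exactly the sum estimated in the proof of Proposition~\ref{prop:furtherA}: split the $n$-sum at $n=b\log m$, absorb the large-$n$ part using $\mu_Y(\sigma=n)=O(e^{-an})$ with $b$ chosen large, use $\sum_{n\le b\log m}\mu_Y(\psi=m,\sigma=n)\le\mu_Y(\psi=m)$ and $|z|^n\ll1+(\log m)m^{b\log|z|}(|z|-1)$ for the remaining range, and resum over $m$ by Karamata via $\mu_Y(\psi\ge m)=O(m^{-2})$. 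This produces the asserted bound $C|h|L(h)\{1+|h|^{-b\log|z|}L(h)(|z|-1)\}$ with the same constants as in Proposition~\ref{prop:furtherA}.

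The delicate point is that $\psi\notin L^2(Y)$, so no pointwise bound on $\mu_Y(\psi=m)$ is available and one must work with the tail $\mu_Y(\psi\ge m)=O(m^{-2})$; the crux is therefore the Karamata resummation, which turns $\sum_{m\le1/|h|}\mu_Y(\psi=m)m^2(\cdots)$ into the correct power of $L(h)=\log(1/|h|)$, with the extra logarithmic weight coming from the factor $\sum_{n<\sigma}|z|^n$ exactly matched against the exponential decay $\mu_Y(\sigma=n)=O(e^{-an})$ so that no surplus $\log$ survives. Everything else is bookkeeping identical to the proofs of Propositions~\ref{prop:furtherR} and~\ref{prop:furtherA}.
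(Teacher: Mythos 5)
Your first reduction is the same as the paper's: using $B_{t,n}v=R(u_{t,n}v_n)$ and Proposition~\ref{prop:Ruv}, both you and the paper arrive at the bound $\sum_{n\ge1}|z|^n\|1_{\{\sigma>n\}}\psi\min\{|h|\psi,1\}\|_{L^1(Y)}$. The gap is in the next step, where you claim that $\sum_{1\le n<\sigma}|z|^n$ contributes ``only a weight of the same type as $|z|^\sigma$'', so that everything is dominated by a constant times $\||z|^\sigma\psi\min\{|h|\psi,1\}\|_{L^1(Y)}$, i.e.\ by exactly the sum of Proposition~\ref{prop:furtherA}. That domination is false: at $|z|=1$ one has $\sum_{1\le n<\sigma}|z|^n=\sigma-1$ while $|z|^\sigma=1$, so you are off by a factor of order $\sigma$. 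Your proposed repair --- that the ``polynomial-in-$\sigma$ factor is harmless once $n$ is split against $b\log m$'', being ``matched against the exponential decay $\mu_Y(\sigma=n)=O(e^{-an})$'' --- does not hold: the exponential tails only control the range $n>b\log m$; in the critical range $n\le b\log m$, $m\le 1/|h|$, the extra factor is of size $\log m\approx L(h)$, and nothing in the resummation against $\mu_Y(\psi\ge m)=O(m^{-2})$ removes it. Carrying a factor of $\sigma$ through the Proposition~\ref{prop:furtherA} computation is exactly the situation of Proposition~\ref{prop:furtherR}, where the explicit factor $n$ is present, and it yields only $|h|L(h)^2\{1+|h|^{-b\log|z|}L(h)(|z|-1)\}$ --- one factor of $L(h)$ more than the estimate you are asked to prove.

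The paper instead keeps the constraint $\sigma>n$ inside the measure, writing the bound as $\sum_{m,n\ge1}\mu_Y(\psi=m,\sigma>n)\,m\,|z|^n\min\{|h|m,1\}$, i.e.\ the furtherA sum with the event $\{\sigma=n\}$ replaced by $\{\sigma>n\}$, and never introduces a pointwise factor of $\sigma$; the exponential tails of $\sigma$ are then invoked to argue that this replacement is immaterial. Even in that formulation the delicate point is that the events $\{\sigma>n\}$ are not disjoint in $n$, so controlling $\sum_{n\le b\log m}\mu_Y(\psi=m,\sigma>n)$ by a constant times $\mu_Y(\psi=m)$ uses the smallness of $\sigma$ on $\{\psi=m\}$ rather than just the marginal tails --- this is precisely what your ``harmless'' claim silently assumes and does not justify. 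As written, your argument establishes only the weaker $L(h)^2$ bound, so the stated estimate is not proved.
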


\begin{proof}
We have 
\begin{align*}
\|\partial_j  \hB(z,t+h)- \partial_j  \hB(z,t)\|_{\cB(\Delta)\mapsto \cB_1(Y)}
& \ll 
\sum_{n=1}^\infty |z|^n\|1_{\{\sigma>n\}}\psi\min\{h\psi,1\}\|_{L^1(Y)}.
\\ & =\sum_{m,n=1}^\infty \mu_Y\big(\psi=m,\sigma>n\big)m|z|^n\min\{|h|m,1\}.
\end{align*}
This is the same as in Proposition~\ref{prop:furtherA} except that $\sigma=n$ is replaced by $\sigma>n$ (which makes no difference given the exponential tails).
\end{proof}

\subsection{Spectral properties for $\hR(z,t)$} \label{sec:specR}

Throughout, we make use of the fact that $\lambda_k^\sigma=1$ (since $\sigma$ is divisible by $q$ and $\lambda_k$ is a $q$'th root of unity).
In particular, $\hR(\lambda_k,0)=\hR(1,0)=R$ for $k=0,\dots,q-1$.

\begin{prop} \label{prop:eig}
Let $z\in\C$, $|z|\le 1$. Then 
$1\in\spec \hR(z,0):\cB_1(Y)\to\cB_1(Y)$ if and only if $z^q=1$ in which case $1$ is a simple eigenvalue with eigenfunction $1$.
\end{prop}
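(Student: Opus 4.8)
The plan is to analyze $\hR(z,0)v = R(z^\sigma v)$ as a perturbation of $R$ and to use the ergodicity of the Gibbs--Markov map $F$ together with the aperiodicity built into the separation of $\spec P \cap S^1$ into $q$'th roots of unity. First I would observe that, since $\sigma$ takes values in $q\Z^+$ (it is divisible by $q$ because $\bar\kappa$ and the tower structure come from the $q$-periodicity of the spectrum of $P$), we have $z^\sigma = (z^q)^{\sigma/q}$, so $\hR(z,0)$ depends only on $z^q$; it therefore suffices to prove the statement for $z^q = w$ with $|w|\le 1$, i.e. to show $1\in\spec R(w^{\sigma/q}\,\cdot)$ iff $w=1$, and that for $w=1$ the eigenvalue $1$ is simple with eigenfunction $1$. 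The case $z^q=1$ is immediate: $\hR(z,0) = R(z^\sigma v) = R v$ since $z^\sigma = 1$, and $R$ has $1$ as a simple eigenvalue with eigenfunction $1$ and eigenmeasure $\mu_Y$ (this is the standard spectral picture for the Gibbs--Markov transfer operator, using ergodicity of $F$), so $1\in\spec\hR(z,0)$ with eigenfunction $\mathbf 1$.

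Next I would handle $|z|<1$, which should be soft: $\|\hR(z,0)\|_{\cB_1(Y)} \le \|R(|z|^\sigma\,\cdot)\|$, and a crude bound gives spectral radius strictly less than $1$. More carefully, $\hR(|z|,0)\mathbf 1 = R(|z|^\sigma) $, which is a nonnegative function strictly less than $\mathbf 1$ in $L^1$-norm since $\int_Y |z|^\sigma\,d\mu_Y < 1$; iterating and using positivity of $R$ one gets $\|\hR(z,0)^n\|_{\cB_1(Y)\to\cB_1(Y)}$ decaying (via Proposition~\ref{prop:Ruv}-type estimates controlling the H\"older part by the $L^1$ part), hence $1\notin\spec\hR(z,0)$. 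The only genuinely delicate case is $|z|=1$ with $z^q\neq1$. Here I would argue by contradiction: if $\hR(z,0)v = v$ for some nonzero $v\in\cB_1(Y)$, then, taking absolute values and using $|R(e^{i\arg z^\sigma}|z|^\sigma v)| = |R(z^\sigma v)| \le R|v|$ (positivity of $R$, and $|z|=1$), we get $|v|\le R|v|$ pointwise; integrating against $\mu_Y$ (which is $R$-invariant in the sense $\int R|v|\,d\mu_Y = \int |v|\,d\mu_Y$) forces $|v| = R|v|$ a.e., hence $|v|$ is a fixed point of $R$, so by ergodicity $|v|$ is constant, and we may normalise $|v|\equiv1$. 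Then equality in $|R(z^\sigma v)| = R|v|$ with $v = e^{i\phi}$, $|v|=1$, forces $z^{\sigma(y_a)}v(y_a)$ to be independent of the branch $a$ for a.e.\ $y$; unwinding this along orbits of $F$ gives a coboundary equation $z^{\sigma} = \bar v\circ F\,/\,\bar v$ for the unimodular function $\bar v = v$, i.e.\ $z^\sigma$ is a multiplicative coboundary. The contradiction then comes from the fact (coming from the construction of the tower modelling $(T,M,\mu)$, or directly from $\spec P\cap S^1 = \{q\text{'th roots of }1\}$, all simple) that the only multiplicative coboundaries of the form $\lambda^\sigma$ with $\lambda\in S^1$ occur when $\lambda^q=1$; equivalently, $z^q\neq1$ would produce a new eigenvalue $z$ on the unit circle for the operator $P$ (or for the tower operator) contradicting the assumed spectral picture of Section~\ref{sec:setup}.

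The main obstacle I expect is precisely the aperiodicity step in the $|z|=1$, $z^q\neq1$ case: ruling out nontrivial solutions of the coboundary equation for $z^\sigma$. The clean way to do this is to relate an eigenfunction of $\hR(z,0)$ back to an eigenfunction of $P_0 = P$ via the renewal/tower correspondence — indeed the operators $\hR, \hA, \hB, \hE$ introduced in this section are designed exactly so that $z=1$ is in the spectrum of the ``full'' renewal combination iff $1$ is in the spectrum of $P$ on the tower, and more generally $(z, t) = (1,0)$-type resonances of $\hR(z,t)$ track eigenvalues of $P_t$. So I would phrase the argument as: an eigenfunction $v$ of $\hR(z,0)$ at eigenvalue $1$ induces (via $\hA(z,0)$ applied to $v$, glued appropriately) a function on $\Delta$ that is an eigenfunction of $P$ at eigenvalue $1$ but "twisted" by $z$; since the eigenvalue $1$ of $P:\cB'\to\cB'$ on the tower corresponds to the finitely many $q$'th roots of unity in $\spec P\cap S^1$, each simple, one concludes $z^q=1$, giving the contradiction. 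Simplicity of the eigenvalue $1$ of $\hR(z,0)$ when $z^q=1$ then follows from simplicity of the eigenvalue $1$ of $R$ on $\cB_1(Y)$, which is the standard Gibbs--Markov statement already invoked in Section~\ref{sec:setup}.
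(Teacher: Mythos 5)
Your argument is essentially the paper's: the case $z^q=1$ is handled identically (since $\sigma$ is divisible by $q$, $\hR(z,0)=R$, and $1$ is a simple eigenvalue of $R$ with eigenfunction $1$), and your positivity/ergodicity/coboundary argument for $|z|=1$, $z^q\neq 1$ — forcing $|v|$ constant, extracting $z^\sigma=v\circ F/v$, and lifting via $w(y,\ell)=z^\ell v(y)$ to an eigenvalue of modulus one for the tower operator, contradicting the spectral picture of Section~\ref{sec:setup} — is precisely the proof of the ``standard'' correspondence that the paper simply cites (``$1$ is an eigenvalue for $\hR(z,0)$ iff $\bar z$ is an eigenvalue for $P_0$''); your direct treatment of $|z|<1$ by integrating $|v|\le R(|z|^\sigma|v|)$ is also fine. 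The one step you do not address is the passage from ``$1\in\spec\hR(z,0)$'' to ``$1$ is an eigenvalue of $\hR(z,0)$'': your contradiction argument starts from an eigenfunction, so you still need quasicompactness of $\hR(z,0)$ on $\cB_1(Y)$ (a standard Lasota--Yorke/Doeblin--Fortet estimate for the Gibbs--Markov operator, uniform in $|z|\le1$, of the type you invoke via Proposition~\ref{prop:Ruv} only in the $|z|<1$ case) to know that any spectral point on the unit circle is an isolated eigenvalue of finite multiplicity; the paper dispatches this in its opening sentence, and without it the ``only if'' direction of the proposition is not quite complete.
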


\begin{proof} 
By quasicompactness, $1\in\spec \hR(z,0)$ if and only if $1$ is an eigenvalue.
Recall also that $1$ is a simple eigenvalue for $\hR(1,0)$ with eigenfunction $1$ and hence this also holds for $\hR(z,0)$ when $z^q=1$.

Finally, it is standard that 
$1$ is an eigenvalue for $\hR(z,0)$ if and only if
$\bar z$ is an eigenvalue for $P_0$ which, as noted in Section~\ref{sec:setup}, is the case if and only if $z^q=1$.  
\end{proof}

By Proposition~\ref{prop:Rt}(b), for $k=0,\dots,q-1$, the eigenvalue $1$ for $\hR(\bar\lambda_k,0)$ extends to a $C^1$ family
of simple isolated eigenvalues $(z,t)\mapsto \tau_k(z,t)$ on $B_\delta(\bar\lambda_k)\times B_\delta(0)$, for some $\delta>0$, with $\tau_k(\bar\lambda_k,0)=1$.
Let $\bar\sigma=\int_Y\sigma\,d\mu_Y$.
Recall that $L(t)=\log(1/|t|)$.

\begin{prop} \label{prop:mu0}
Let $0\le k,k'\le q-1$.
There are constants $C>0$, $\delta>0$ such that
\begin{itemize}
\item[(a)]
$|\tau_k(z,0)-1-\lambda_k\bar\sigma(z-\bar\lambda_k)|\le C|z-\bar\lambda_k|^2$ for $z\in B_\delta(\bar\lambda_k)$;
\item[(b)] $|\tau_k(\bar\lambda_k,t)-1|\le C|t|^2L(t)$ for $t\in B_\delta(0)$;
\item[(c)] $|\tau_{k}(\bar\lambda_{k},t)-
\tau_{k'}(\bar\lambda_{k'},t)|\le C|t|^2$ for all
$t\in B_\delta(0)$.
\end{itemize}
\end{prop}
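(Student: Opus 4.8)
The plan is to extract all three estimates from the eigenvalue perturbation formula obtained by applying the spectral projection. Write $\Pi_k(z,t)$ for the rank-one spectral projection on $\cB_1(Y)$ associated to the simple eigenvalue $\tau_k(z,t)$ of $\hR(z,t)$, normalised so that $\Pi_k(z,t)\mathbf 1\neq 0$, and let $\mu_k(z,t)=\int_Y \Pi_k(z,t)\mathbf 1\,d\mu_Y$ (at the base point this is $\mu_k(\bar\lambda_k,0)=\int_Y\mathbf 1\,d\mu_Y=1$, up to the chosen normalisation). Since $\hR(z,t)\Pi_k(z,t)=\tau_k(z,t)\Pi_k(z,t)$, integrating $\hR(z,t)\Pi_k(z,t)\mathbf 1$ against $d\mu_Y$ and using $\int_Y Rv\,d\mu_Y=\int_Y v\,d\mu_Y$ gives
\[
\tau_k(z,t)\,\mu_k(z,t)=\int_Y e^{it\cdot\kappa_\sigma}z^\sigma\,\Pi_k(z,t)\mathbf 1\,d\mu_Y,
\]
so that
\[
\tau_k(z,t)-1=\frac{1}{\mu_k(z,t)}\int_Y \big(e^{it\cdot\kappa_\sigma}z^\sigma-1\big)\,\Pi_k(z,t)\mathbf 1\,d\mu_Y + \frac{1-\mu_k(z,t)}{\mu_k(z,t)}.
\]
By Proposition~\ref{prop:Rt}(b), $(z,t)\mapsto\Pi_k(z,t)$ is $C^1$ into $\cB_1(Y)\hookrightarrow L^1(Y)$, hence $\mu_k(z,t)=1+O(|z-\bar\lambda_k|+|t|)$ and $\Pi_k(z,t)\mathbf 1=\mathbf 1+O(|z-\bar\lambda_k|+|t|)$ in $L^1$. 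This reduces everything to estimating the integral $\int_Y(e^{it\cdot\kappa_\sigma}z^\sigma-1)\,d\mu_Y$ plus lower-order correction terms.

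For part (a), set $t=0$: then $\tau_k(z,0)-1$ is governed by $\int_Y(z^\sigma-1)\,d\mu_Y$. Writing $z=\bar\lambda_k w$ with $w$ near $1$, and using $\lambda_k^\sigma=1$ so $z^\sigma=w^\sigma$, a Taylor expansion $w^\sigma-1=\sigma(w-1)+O(\sigma^2|w-1|^2)$ together with $\sigma\in L^r$ for all $r<2$ (in particular $\sigma\in L^1$, and $\sigma$ has exponential tails so all moments are finite) yields $\int_Y(z^\sigma-1)\,d\mu_Y=\bar\sigma(w-1)+O(|w-1|^2)$. Since $w-1=\bar\lambda_k(z-\bar\lambda_k)$ and the correction terms from $\Pi_k$ and $\mu_k$ contribute $O(|z-\bar\lambda_k|)$ times the $O(|z-\bar\lambda_k|)$ integral — hence $O(|z-\bar\lambda_k|^2)$ — this gives the claimed $|\tau_k(z,0)-1-\lambda_k\bar\sigma(z-\bar\lambda_k)|\le C|z-\bar\lambda_k|^2$ once one checks $\bar\lambda_k=\bar\lambda_k^{-1}$ matches the stated leading coefficient. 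For part (b), set $z=\bar\lambda_k$: then $z^\sigma=1$ and the integral becomes $\int_Y(e^{it\cdot\kappa_\sigma}-1)\,d\mu_Y$, bounded by $\int_Y\min\{|t|\,|\kappa_\sigma|,\,|t|^2|\kappa_\sigma|^2\}\,d\mu_Y\le\int_Y\min\{|t|\psi,|t|^2\psi^2\}\,d\mu_Y$; splitting at $\psi\le 1/|t|$ and using $\mu_Y(\psi>m)=O(m^{-2})$ from Proposition~\ref{prop:psi} gives the standard bound $O(|t|^2L(t))$. Part (c) is a difference: $\tau_k(\bar\lambda_k,t)-\tau_{k'}(\bar\lambda_{k'},t)$ equals a difference of two integrals of $e^{it\cdot\kappa_\sigma}-1$ against $\Pi_k(\bar\lambda_k,t)\mathbf 1$ and $\Pi_{k'}(\bar\lambda_{k'},t)\mathbf 1$ respectively, plus the analogous $\mu_k,\mu_{k'}$ corrections; since at $t=0$ both projections give $\mathbf 1$ (by Proposition~\ref{prop:eig}, the eigenfunction is $\mathbf 1$ for every $k$) and $C^1$ dependence gives $\|\Pi_k(\bar\lambda_k,t)\mathbf 1-\Pi_{k'}(\bar\lambda_{k'},t)\mathbf 1\|_{L^1}=O(|t|)$, the difference is $O(|t|)\cdot O(|t|)=O(|t|^2)$, where the first $O(|t|)$ comes from $|e^{it\cdot\kappa_\sigma}-1|\le|t|\psi$ with $\psi\in L^1$.

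The main obstacle is bookkeeping the normalisation of $\Pi_k$ and proving $\mu_k(z,t)$ stays bounded away from $0$ and is $C^1$ with the right first-order behaviour — one must verify that $1-\mu_k(z,t)$ is genuinely $O(|z-\bar\lambda_k|^2+|t|^2L(t))$ and not merely $O(|z-\bar\lambda_k|+|t|)$, so that it does not spoil parts (a) and (b). This is handled by observing that $\mu_k(z,t)=\int_Y\Pi_k(z,t)\mathbf 1\,d\mu_Y$ can itself be expressed via the same eigen-relation, or more simply by choosing the normalisation $\mu_k(z,t)\equiv 1$ (i.e. rescaling the eigenfunction $v_{z,t}:=\Pi_k(z,t)\mathbf 1$ so that $\int_Y v_{z,t}\,d\mu_Y=1$, which is possible for $(z,t)$ near $(\bar\lambda_k,0)$ since the integral is close to $1$); then the second correction term vanishes identically and only the integral $\int_Y(e^{it\cdot\kappa_\sigma}z^\sigma-1)v_{z,t}\,d\mu_Y$ remains. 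With that normalisation fixed, parts (a)–(c) follow from the three regimes described above together with Propositions~\ref{prop:psi} and~\ref{prop:Rt}; the $L^1$-continuity of $v_{z,t}$ (rather than $\cB_1(Y)$-continuity) suffices everywhere since $\kappa_\sigma,\psi\in L^r$ with conjugate exponent handled by $v_{z,t}\in\cB_1(Y)\subset L^\infty$.
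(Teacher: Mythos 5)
Your overall strategy --- normalising the eigenfunction $v_{z,t}$ so that $\int_Y v_{z,t}\,d\mu_Y=1$, writing $\tau_k(z,t)=\int_Y\hR(z,t)v_{z,t}\,d\mu_Y$, extracting the main term $\int_Y z^\sigma e^{it\cdot\kappa_\sigma}\,d\mu_Y$, and treating the eigenfunction correction as a product of two first-order factors --- is exactly the paper's proof (there written as $I_k+J_k$), and your parts (a) and (c) go through, modulo a harmless algebra slip in (a): with $z=\bar\lambda_k w$ one has $w-1=\bar\lambda_k^{-1}(z-\bar\lambda_k)=\lambda_k(z-\bar\lambda_k)$, so the leading coefficient is $\lambda_k\bar\sigma$ directly; the identity ``$\bar\lambda_k=\bar\lambda_k^{-1}$'' you appeal to is false unless $\lambda_k=\pm1$.

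The genuine gap is in part (b). The pointwise bound you invoke, $|e^{it\cdot\kappa_\sigma}-1|\le\min\{|t|\,|\kappa_\sigma|,\,|t|^2|\kappa_\sigma|^2\}$, is false: for $|t|\,|\kappa_\sigma|$ small the left-hand side is of order $|t|\,|\kappa_\sigma|$, which dominates $|t|^2|\kappa_\sigma|^2$. Nor can the estimate be rescued by taking absolute values inside the integral: since $\psi\in L^1$ but $\psi\notin L^2$, one has $\int_Y\min\{|t|\psi,2\}\,d\mu_Y\asymp|t|$, so bounding $\big|\int_Y(e^{it\cdot\kappa_\sigma}-1)\,d\mu_Y\big|$ by $\int_Y|e^{it\cdot\kappa_\sigma}-1|\,d\mu_Y$ only yields $O(|t|)$, not $O(|t|^2L(t))$. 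The missing ingredient is the cancellation coming from $\int_Y\kappa_\sigma\,d\mu_Y=0$, which lets you subtract the linear term:
\begin{equation*}
\int_Y(e^{it\cdot\kappa_\sigma}-1)\,d\mu_Y=\int_Y(e^{it\cdot\kappa_\sigma}-1-it\cdot\kappa_\sigma)\,d\mu_Y ,
\end{equation*}
after which the valid bound $|e^{ix}-1-ix|\le2\min\{|x|^2,|x|\}$, the tail estimate $\mu_Y(\psi>m)=O(m^{-2})$ of Proposition~\ref{prop:psi}, and the split at $\psi\le1/|t|$ give $O(|t|^2L(t))$; this is precisely how the paper argues. With that correction (and noting, as you do in part (a), that the eigenfunction correction $\int_Y(e^{it\cdot\kappa_\sigma}-1)(v_{\bar\lambda_k,t}-1)\,d\mu_Y$ is $O(|t|)\cdot O(|t|)=O(|t|^2)$, using $\|v_{\bar\lambda_k,t}-1\|_\infty=O(|t|)$ and $\psi\in L^1$), your proof of (b) is complete and coincides with the paper's.
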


\begin{proof} 
Let $v_k(z,t)$ denote the $C^1$ families of eigenfunctions corresponding to the eigenvalues $\tau_k(z,t)$, with
\mbox{$v_k(\bar\lambda_k,0)=1$}. Normalise so that
\[
\int_Y \hR(\bar\lambda_k,0)v_k(z,t)\,d\mu_Y
=\int_Y v_k(z,t)\,d\mu_Y=1
\]
for $(z,t)\in B_\delta(\bar\lambda_k)\times B_\delta(0)$.
Then
\[
\tau_k(z,t)=\int_Y \hR(z,t)v_k(z,t)\,d\mu_Y=I_k(z,t)+J_k(z,t)
\]
where
\begin{align*}
I_k(z,t) & =\int_Y \hR(z,t)1\,d\mu_Y = \int_Y z^\sigma e^{it\cdot \kappa_\sigma}\,d\mu_Y, \\
J_k(z,t) &=
\int_Y (\hR(z,t)-\hR(\bar\lambda_k,0))(v_k(z,t)-v_k(\bar\lambda_k,0))\,d\mu_Y.
\end{align*}

Since $\hR$ and $v$ are $C^1$, it follows that
$J_k(z,0)=O(|z-\bar\lambda_k|^2)$ and $J_k(\bar\lambda_k,t)=O(|t|^2)$.
Hence it suffices to consider the first term $I_k$.

For $t=0$, using that $\bar\lambda_k^\sigma=1$,
\begin{align*}
I_k(z,0)  =\int_Y(\bar\lambda_k+(z-\bar\lambda_k))^\sigma\,d\mu_Y
& =\int_Y (1+(\bar\lambda_k^{-1}z-1))^\sigma\,d\mu_Y
\\ & =1+\bar\lambda_k^{-1}\bar\sigma(z-\bar\lambda_k)+O(|z-\bar\lambda_k|)^2
\end{align*}
yielding part~(a).

Since $\int_Y\kappa_\sigma\,d\mu_Y=0$, for $z=1$, 
\[
I_k(\bar\lambda_k,t)
=1 +\int_Y (e^{it\cdot\kappa_\sigma}-1-it\cdot\kappa_\sigma)\,d\mu_Y,
\]
so
\begin{align*}
|I_k(\bar\lambda_k,t) -1| & \le \int_Y |e^{it\cdot \kappa_\sigma}-1-it\cdot\kappa_\sigma|\,d\mu_Y 
 \le 2\int_Y \min\{|t|^2\psi^2,|t|\psi\}\,d\mu_Y 
\\ & \le 2|t|^2 \sum_{0\le m\le 1/|t|} m^2\mu_Y(\psi=m)
+2|t| \sum_{m> 1/|t|} m\mu_Y(\psi=m).
\end{align*}
Using the tail estimate $\mu_Y(\psi>n)=O(n^{-2})$ and resummation
we obtain $|I_k(\bar\lambda_k,t)-1|\ll |t|^2L(t)$ proving (b).
%
%
Finally, $I_k(\bar\lambda_k,t)$ is independent of $k$ yielding part~(c).
\end{proof}

It follows from Proposition~\ref{prop:mu0}(a) that $(\partial_z\tau_k)(\bar\lambda_k,0)>0$.
By the implicit function theorem, we can solve uniquely the equation $\tau_k(z,t)=1$ near $(\bar\lambda_k,0)$ to obtain a $C^1$ solution $z=g_k(t)$, $g_k:B_\delta(0)\to\C$, with $g_k(0)=\bar\lambda_k$.

\vspace{1ex}
Recall that
\(
M_b(t,h)=|h|L(h)\big\{1+ L(h)\,|t|^2L(t) +|h|^{-b|t|^2L(t)}L(h)^2\,|t|^4L(t)^2\big\}.
\)

\begin{cor} \label{cor:g}
There exist $C>0$, $\delta>0$, $b>0$ such that
for all $t,\,h\in B_\delta(0)$, $j=1,\dots,d$, $k=0,\dots,q-1$,
\begin{itemize}
\item[(a)]
$|g_k(t)-\bar\lambda_k|\le C|t|^2L(t)$; 
\item[(b)]
$|\partial_j g_k(t+h)-\partial_j g_k(t)|\le CM_b(t,h)$.
\end{itemize}
\end{cor}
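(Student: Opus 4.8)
The plan is to extract the required estimates on $g_k$ directly from the defining equation $\tau_k(g_k(t),t)=1$ together with the regularity and quantitative bounds already established for $\tau_k$. Write $z=g_k(t)$ and abbreviate $\tau=\tau_k$, $\bar\lambda=\bar\lambda_k$. Part~(a) is the easy half: differentiating Proposition~\ref{prop:mu0}(a) gives $(\partial_z\tau)(\bar\lambda,0)=\lambda_k\bar\sigma\neq0$, so by continuity $|\partial_z\tau|$ is bounded below on $B_\delta(\bar\lambda)\times B_\delta(0)$ after shrinking $\delta$. From $\tau(g_k(t),t)=1=\tau(\bar\lambda,0)$ and the mean value theorem in the $z$-variable,
\[
|g_k(t)-\bar\lambda|\ll |\tau(\bar\lambda,t)-\tau(\bar\lambda,g_k(t)\text{'s value})|\le |\tau(\bar\lambda,t)-1|+|\tau(g_k(t),t)-\tau(\bar\lambda,t)|,
\]
but more cleanly: $1-\tau(\bar\lambda,t) = \tau(g_k(t),t)-\tau(\bar\lambda,t) = \int_0^1 (\partial_z\tau)(\bar\lambda+s(g_k(t)-\bar\lambda),t)\,ds\,(g_k(t)-\bar\lambda)$, so $|g_k(t)-\bar\lambda|\ll |1-\tau(\bar\lambda,t)|\ll |t|^2L(t)$ by Proposition~\ref{prop:mu0}(b). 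This proves~(a), and in particular $g_k(t)\to\bar\lambda$ at rate $|t|^2L(t)$, which will be the key size input for~(b).

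For part~(b), differentiate the implicit equation: since $z=g_k(t)$ solves $\tau(z,t)=1$, the implicit function theorem gives
\[
\partial_j g_k(t) = -\frac{(\partial_{t_j}\tau)(g_k(t),t)}{(\partial_z\tau)(g_k(t),t)}.
\]
To bound $\partial_j g_k(t+h)-\partial_j g_k(t)$ I would add and subtract, controlling separately the variation of the numerator and of the denominator, using that both are bounded and that the denominator is bounded away from $0$. The needed modulus-of-continuity estimates for $\partial_{t_j}\tau$ and $\partial_z\tau$ as functions of $(z,t)$ should follow from the quantitative bounds of Section~5.2 (Propositions~\ref{prop:furtherR}, \ref{prop:furtherA}, \ref{prop:furtherB}, Remark~\ref{rmk:furtherR}) fed through the formula $\tau_k(z,t)=\int_Y \hR(z,t)v_k(z,t)\,d\mu_Y$ together with $C^1$-dependence of the eigenprojection/eigenfunction $v_k$. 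Concretely, one gets a bound of the form $|\partial_j\tau_k(z',t+h)-\partial_j\tau_k(z,t)|\ll |h|L(h)\{1+|h|^{-b\log|z|}L(h)(|z|-1)\} + |z'-z|\cdot(\text{Lipschitz-type factor})$, valid for $z,z'$ on the curve (so $|z|-1\le c|t|^2L(t)$ by~(a), $|z|^{-b\cdots}$ becomes $|h|^{-b|t|^2L(t)}$-type after substitution), and similarly for $\partial_z\tau_k$; combining the two via the quotient rule and the substitution $|g_k(t+h)-g_k(t)|\ll M_b(t,h)$-type bound (itself obtained by the same IFT differencing, bootstrapping from the $C^1$-bound on $g_k$ and part~(a)) yields exactly $M_b(t,h)$, where the three terms of $M_b$ correspond respectively to the $C^1$ increment, the $|t|^2L(t)$-correction from replacing $z$-arguments by $\bar\lambda$, and the cross term $|h|^{-b|t|^2L(t)}L(h)^2|t|^4L(t)^2$ coming from the $(|z|-1)$-dependent part of Proposition~\ref{prop:furtherR}.

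The main obstacle I expect is the careful bookkeeping in the last step: one must track how the $|z|^n$-growth in the renewal-operator estimates of Section~5.2 turns into the $|h|^{-b|t|^2L(t)}$ factor in $M_b$ once $z$ is evaluated on the eigenvalue curve $g_k(t)$ (where $|g_k(t)|$ may slightly exceed $1$, by an amount $O(|t|^2L(t))$ by~(a)), and to verify that all the lower-order terms genuinely fit under the three terms displayed in $M_b(t,h)$ rather than producing a worse remainder. In particular one has to be a little careful that $b$ (the constant in $M_b$) can be chosen uniformly, absorbing the various $b$'s appearing in Propositions~\ref{prop:furtherR}--\ref{prop:furtherB}. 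Everything else is routine manipulation with the implicit function theorem and the a priori bounds already in hand. \qed
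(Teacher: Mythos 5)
Your proposal follows essentially the same route as the paper's proof: part (a) via the factorisation $1-\tau_k(\bar\lambda_k,t)=(g_k(t)-\bar\lambda_k)\int_0^1(\partial_z\tau_k)(\bar\lambda_k+s(g_k(t)-\bar\lambda_k),t)\,ds$ with the integral factor bounded away from zero (Proposition~\ref{prop:mu0}(a)) and Proposition~\ref{prop:mu0}(b), and part (b) via the implicit-function quotient $\partial_jg_k=-\partial_j\tau_k/\partial_z\tau_k$, the moduli of continuity from Proposition~\ref{prop:furtherR} and Remark~\ref{rmk:furtherR}, and the substitution $|g_k(t)-\bar\lambda_k|\ll|t|^2L(t)$ from (a). The one point to make explicit is that $|g_k(t)|\ge1$ (because $|\tau_k(\bar\lambda_k,t)|\le1$ by the spectral radius bound), which is what lets you apply the Section~\ref{sec:pfkey} estimates (stated for $1\le|z|\le1+\delta$) at $z=g_k(t)$ and convert $|h|^{-b\log|z|}$ into $|h|^{-b|t|^2L(t)}$; with that, your bookkeeping closes as in the paper, possibly after enlarging $b$.
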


\begin{proof}
Write
\begin{equation} \label{eq:mu}
\tau_k(z,t) =\tau_k(\bar\lambda_k,t)+(z-\bar\lambda_k)c_k(z,t).
\end{equation}
It follows from Proposition~\ref{prop:Rt}(b) that
$(z,t)\mapsto \partial_z\tau_k(z,t)$ is $C^1$.
Introducing momentarily the function $\zeta(s)=\tau_k(\bar\lambda_k+s(z-\bar\lambda_k),t)$, 
\begin{equation} \label{eq:c}
c_k(z,t)=(z-\bar\lambda_k)^{-1}\int_0^1 \zeta'(s)\,ds=\int_0^1 (\partial_z\tau_k)(1+s(z-\bar\lambda_k),t)\,ds.
\end{equation}
We deduce that
$(z,t)\mapsto c_k(z,t)$ is $C^1$.
By Proposition~\ref{prop:mu0}(a), $|c_k(\bar\lambda_k,0)|=\bar\sigma>0$ and we can shrink $\delta$ if necessary so that $c_k(\bar\lambda_k,t)$ is bounded away from zero for $t\in B_\delta(0)$.

Solving $\tau_k(z,t)=1$, 
\begin{equation} \label{eq:g}
g_k(t)-\bar\lambda_k=z-\bar\lambda_k\sim c_k(\bar\lambda_k,t)^{-1}(1-\tau_k(\bar\lambda_k,t)).
\end{equation}
The spectral radius of $\hR(\bar\lambda_k,t)$ is at most $1$ for all $t$, so $\tau_k(\bar\lambda_k,t)\in B_1(0)$.
Hence $|g_k(t)|\ge1$ for all~$t$.
By Proposition~\ref{prop:mu0}(b),
\[
|g_k(t)-\bar\lambda_k|\sim |c_k(\bar\lambda_k,t)|^{-1}|1-\tau_k(\bar\lambda_k,t)|\ll |t|^2L(t).
\]
proving part (a).

Implicit differentiation of $\tau_k(g_k(t),t)\equiv1$ yields
\[
\partial_j g_k(t)=-\partial_j \tau_k(g_k(t),t)/\partial_z \tau_k(g_k(t),t).
\]
By smoothness of $\partial_z\tau_k$ and $g$, the denominator $t\mapsto \partial_z \tau_k(g_k(t),t)$ is $C^1$.
We claim that
\begin{equation} \label{eq:tauk}
|\partial_j \tau_k(g_k(t+h),t+h)  -\partial_j \tau_k(g_k(t),t)|
  \ll M_b(t,h)
\end{equation}
from which part~(b) follows.

It follows from Proposition~\ref{prop:Rt}(c) that $z\mapsto\partial_j\tau_k(z,t)$ is $C^1$ uniformly in $t$. Also, $g$ is $C^1$,
so $|\partial_j \tau_k(g_k(t+h),t+h)-\partial_j \tau_k(g_k(t),t+h)|\ll |h|$.
By~\eqref{eq:mu},
\begin{align*}
|\partial_j \tau_k(z,t+h)- & \partial_j \tau_k(z,t)|
 \\ & \le
 |\partial_j\tau_k(\bar\lambda_k,t+h)-\partial_j\tau_k(\bar\lambda_k,t)|+|z-\bar\lambda_k|\,|\partial_jc_k(z,t+h)-\partial_jc_k(z,t)|.
\end{align*}
By Remark~\ref{rmk:furtherR}, 
$|\partial_j \tau_k(\bar\lambda_k,t+h)-  \partial_j \tau_k(\bar\lambda_k,t)|\ll |h|L(h)$.
By~\eqref{eq:c} and Proposition~\ref{prop:furtherR},
\[
|\partial_jc_k(z,t+h)- \partial_jc_k(z,t)|
\ll |h|L(h)^2 \big\{1+
|h|^{-b\log|z|}L(h)|z-\bar\lambda_k|\big\}.
\]
Hence
\[
|\partial_j \tau_k(z,t+h)-\partial_j \tau_k(z,t)|
 \ll |h|L(h)+
|h|L(h)^2|z-\bar\lambda_k|+
|h|^{1-b\log|z|}L(h)^3|z-\bar\lambda_k|^2,
\]
for $|z|\ge1$.
But $|g_k(t)|\ge1$, so by part~(a), 
\[
|\partial_j \tau_k(g_k(t),t+h)-\partial_j \tau_k(g_k(t),t)|
  \ll M_b(t,h)
\]
completing the proof of the claim.
\end{proof}

Let $\pi_k(z,t):\cB_1(Y)\to\cB_1(Y)$ denote the spectral projection corresponding to $\tau_k(z,t)$. 

\begin{lemma} \label{lem:mu}
There exists $\delta>0$ such that
\begin{equation} \label{eq:H}
(1-\tau_k(z,t))^{-1}\pi_k(z,t)=(g_k(t)-z)^{-1}\tilde\pi_k(t)+H_k(z,t)
\end{equation} 
where $\tilde\pi_k(t),\,H_k(z,t):\cB_1(Y)\to \cB_1(Y)$ are families of bounded operators satisfying
\begin{itemize}
\item[(a)] $\tilde\pi_k$ is $C^1$ on $B_\delta(0)$;
\item[(b)] $H_k$ is $C^0$ on $B_\delta(\bar\lambda_k)\times B_\delta(0)$;
\item[(c)]
$z\mapsto H_k(z,t)$ is analytic on $B_\delta(\bar\lambda_k)$ for $t\in B_\delta(0)$.
\end{itemize}
Moreover, there are constants $C>0$, $b>0$ such that $|\partial_j\tilde\pi_k(t+h)-\partial_j\tilde\pi_k(t)|\le CM_b(t,h)$ 
for $t,h\in B_\delta(0)$, $j=1\ldots, d$.
\end{lemma}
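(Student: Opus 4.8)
The plan is to extract the simple-pole part of the resolvent-type expression $(1-\tau_k(z,t))^{-1}\pi_k(z,t)$ in the variable $z$ near $z=\bar\lambda_k$, and absorb everything else into an analytic (in $z$) remainder. First I would recall from Corollary~\ref{cor:g} that $z=g_k(t)$ is the unique solution of $\tau_k(z,t)=1$ near $(\bar\lambda_k,0)$, so that $1-\tau_k(z,t)$ has a simple zero there. Using the factorisation $\tau_k(z,t)=\tau_k(\bar\lambda_k,t)+(z-\bar\lambda_k)c_k(z,t)$ from~\eqref{eq:mu} together with~\eqref{eq:c}--\eqref{eq:g}, one writes
\[
1-\tau_k(z,t)=(g_k(t)-z)\,c_k(z,t)\,u_k(z,t)
\]
for a suitable $C^1$, non-vanishing scalar factor $u_k(z,t)$ obtained by Taylor-expanding $c_k$ about $z=g_k(t)$ (equivalently, define $u_k$ by this identity and check it is smooth and bounded away from $0$ after shrinking $\delta$, using that $c_k(\bar\lambda_k,0)=\bar\sigma\ne0$). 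Then
\[
(1-\tau_k(z,t))^{-1}=(g_k(t)-z)^{-1}(c_k(z,t)u_k(z,t))^{-1}.
\]

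Next I would split the spectral projection: write $\pi_k(z,t)=\pi_k(g_k(t),t)+\big(\pi_k(z,t)-\pi_k(g_k(t),t)\big)$. The second bracket vanishes to first order in $z-g_k(t)$ (analyticity of $z\mapsto\pi_k(z,t)$ from Proposition~\ref{prop:Rt}(b) and the standard Cauchy-integral formula for spectral projections of a $C^1$ family of operators), so dividing it by $g_k(t)-z$ leaves something analytic in $z$ and jointly continuous in $(z,t)$. Combining, set
\[
\tilde\pi_k(t)=(c_k(g_k(t),t)u_k(g_k(t),t))^{-1}\pi_k(g_k(t),t),
\]
which is the residue at $z=g_k(t)$, and collect the rest into $H_k(z,t)$. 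By construction $\tilde\pi_k$ is a composition of $C^1$ scalar functions of $t$ (namely $g_k$, $c_k$, $u_k$) with the $C^1$ operator family $t\mapsto\pi_k(g_k(t),t)$, giving~(a); and $H_k$ is continuous on $B_\delta(\bar\lambda_k)\times B_\delta(0)$ and analytic in $z$ for fixed $t$, giving~(b),(c). I would double-check that $\tilde\pi_k(t)$ really is the spectral projection (up to the scalar), so that later applications recover the right leading term; this is immediate since $\pi_k(g_k(t),t)$ projects onto the eigenspace for the eigenvalue $\tau_k(g_k(t),t)=1$.

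The main obstacle is the final H\"older-type modulus-of-continuity estimate $|\partial_j\tilde\pi_k(t+h)-\partial_j\tilde\pi_k(t)|\le CM_b(t,h)$. Here I cannot simply invoke $C^1$-smoothness; I need the quantitative bound. The strategy is to differentiate the product defining $\tilde\pi_k$ by the Leibniz rule and estimate each factor's increment. The scalar factors $\partial_j g_k$, $\partial_j c_k(g_k(t),t)$, $\partial_j u_k(g_k(t),t)$ obey exactly the $M_b$-type bound: for $g_k$ this is Corollary~\ref{cor:g}(b), and for $c_k$, $u_k$ it follows from~\eqref{eq:c}, Proposition~\ref{prop:furtherR}, Corollary~\ref{cor:g}, and the fact (Corollary~\ref{cor:g}(a)) that $|g_k(t)-\bar\lambda_k|\ll|t|^2L(t)$, which is what converts the raw bound $|h|L(h)^2\{1+|h|^{-b\log|z|}L(h)(|z|-1)\}$ into the stated $M_b(t,h)$ with $|z|-1$ replaced by $|z-\bar\lambda_k|\ll|t|^2L(t)$ and $\log|z|$ similarly controlled. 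For the operator factor $t\mapsto\partial_j\pi_k(g_k(t),t)$ I would write $\pi_k(z,t)$ as a contour integral $\frac{1}{2\pi i}\oint(\zeta-\hR(z,t))^{-1}d\zeta$ over a fixed small circle separating $\tau_k$ from the rest of the spectrum, differentiate under the integral, and reduce the increment estimate to the modulus-of-continuity bounds for $t\mapsto\partial_j\hR(z,t)$ provided by Remark~\ref{rmk:furtherR} (and Proposition~\ref{prop:furtherR} if second-order terms in $\zeta$-resolvent expansions are needed), again using Corollary~\ref{cor:g}(a) to handle the $z=g_k(t)$ substitution. Assembling these via Leibniz — products of bounded quantities with $M_b$-controlled increments, noting $M_b$ is sub-multiplicative-friendly in the relevant regime — yields the claim. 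The bookkeeping of how the various $L(h)$ powers and the $|h|^{-b|t|^2L(t)}$ factor combine is the delicate part and mirrors the computation already carried out in the proof of Corollary~\ref{cor:g}(b).
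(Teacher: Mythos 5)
Your proposal is correct and follows essentially the same route as the paper: extract the simple pole at $z=g_k(t)$ by factoring $1-\tau_k(z,t)=(g_k(t)-z)\times(\text{nonvanishing }C^1\text{ factor})$ and splitting $\pi_k(z,t)=\pi_k(g_k(t),t)+(g_k(t)-z)\tH(z,t)$, then prove the $M_b$ modulus-of-continuity bound for the residue via Corollary~\ref{cor:g}, Proposition~\ref{prop:furtherR} and Remark~\ref{rmk:furtherR}. The only differences are cosmetic: you factor $1-\tau_k$ through $c_k$ times a correction $u_k$ where the paper uses a single factor $\beta(z,t)=\int_0^1\partial_z\tau_k((1-s)z+sg_k(t),t)\,ds$, and you make explicit (via the contour-integral representation of $\pi_k$) the transfer of the $\hR$-increment estimates to the spectral projection, which the paper leaves as ``the identical argument''.
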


\begin{proof}
Fix $j$ and $k$.
Throughout this proof, we use the following abbreviations (for $r\ge0$):
\begin{itemize}

\parskip=-2pt
\item[(a)] ``$C^r$ uniformly in $z$'' means $C^r$ on $B_\delta(0)$ uniformly in $z\in B_\delta(\bar\lambda_k)$;
\item[(b)] ``jointly $C^r$'' means $C^r$ on $B_\delta(\bar\lambda_k)\times B_\delta(0)$;
\item[(c)] ``analytic'' means analytic on $B_\delta(\bar\lambda_k)$ for all $t\in B_\delta(0)$.
\end{itemize}

\noindent {\bf Step 1}
Write
\[
\pi_k(z,t)=\pi_k(g_k(t),t)+(g_k(t)-z)\tH(z,t).
\]
It follows from Proposition~\ref{prop:Rt}(a) that 
$\pi_k$ is analytic and hence that $\tH$ is analytic.

Next, 
$\partial_j(\pi_k(g_k(t),t))=G_1(t)+G_2(t)$
where
\[
G_1(t)=(\partial_z\pi_k)(g_k(t),t)\cdot \partial_j g_k(t),
\qquad
G_2(t)=(\partial_j\pi_k)(g_k(t),t). 
\]
It follows from Proposition~\ref{prop:Rt}(b) that
$\partial_z\pi_k$ is jointly $C^1$.
Also, $g$ is $C^1$.
Hence, by Corollary~\ref{cor:g}(b),
\[
|G_1(t+h)-G_1(t)|\ll |h|+|\partial_jg_k(t+h)-\partial_jg_k(t)|\ll M_b(t,h).
\]
Next, we note that $G_2$, with $\pi_k$ changed to $\tau_k$, was estimated in~\eqref{eq:tauk}, and the identical argument shows that
\(
|G_2(t+h)-G_2(t)|\ll M_b(t,h).
\)
Hence
\[
|\partial_j(\pi_k(g_k(t+h),t+h))-
\partial_j(\pi_k(g_k(t),t))|\ll M_b(t,h).
\]

Writing
$\tH(z,t)=\int_0^1(\partial_z\pi_k)((1-s)z+sg_k(t),t)\,ds$, we obtain that
$\tH$ is jointly $C^0$.

\vspace{2ex}
\noindent {\bf Step 2}
Write
\[
1-\tau_k(z,t)= \tau_k(g_k(t),t)-\tau_k(z,t)=(g_k(t)-z)\beta(z,t).
\]
Again, it follows from Proposition~\ref{prop:Rt}(a) that 
$\tau_k$ is analytic and hence that $b$ is analytic.
Also, 
it follows from Proposition~\ref{prop:Rt}(b) that 
$\partial_z^2\tau_k$ is jointly $C^1$.
Writing
$\beta(z,t)=\int_0^1 \partial_z\tau_k((1-s)z+sg_k(t),t)\,ds$, we obtain that
$\partial_z\beta$ is jointly $C^1$. By Proposition~\ref{prop:furtherR},
\[
|\partial_j \beta(g_k(t+h),t+h)-\partial_j \beta(g_k(t),t)|\ll M_b(t,h).
\]

By Proposition~\ref{prop:mu0}(a),  $|\beta(\lambda_k,0)|=\bar\sigma>0$ and we can shrink $\delta$ if necessary so that
$\beta$ is bounded away from zero on $B_\delta(\bar\lambda_k)\times B_\delta(0)$.
Let $\tilde \beta(z,t)=\beta(z,t)^{-1}$.
Then, we can write 
\[
(1-\tau_k(z,t))^{-1}=
(g_k(t)-z)^{-1}\big\{\tilde \beta(g_k(t),t)+(g_k(t)-z)q(z,t)\big\},
\]
where $q$ is analytic and jointly $C^0$ and
\[
|\partial_j \tilde \beta(g_k(t+h),t+h)- \partial_j \tilde \beta(g_k(t),t)|
\ll M_b(t,h).
\]

\vspace{2ex}
\noindent {\bf Step 3} Combining Steps 1 and 2, we obtain~\eqref{eq:H}
with
\begin{align*}
\tilde\pi_k(t) & =\tilde \beta(g_k(t),t)\pi_k(g_k(t),t), \\ 
H_k(z,t) & = 
q(z,t)\pi_k(g_k(t),t)+\tilde \beta(g_k(t),t)\tH(z,t)+(g_k(t)-z)q(z,t)\tH(z,t).
\end{align*}
The desired regularity properties of $\tilde\pi_k$ and $H_k$ follow immediately from the regularity properties established in Steps 1 and 2.
\end{proof}

\begin{cor} \label{cor:Rt}
There exists $\delta>0$ such that
\[
(I-\hR(z,t))^{-1}=\sum_{k=0}^{q-1}(g_k(t)-z)^{-1}\tilde\pi_k(t)+\hH(z,t),\quad
(z,t)\in B_{1+\delta}(0)\times B_\delta(0),
\]
where $\tilde\pi_k$ is as in Lemma~\ref{lem:mu} and
$\hH(z,t):\cB_1(Y)\to \cB_1(Y)$ is a family of bounded operators satisfying
\begin{itemize}
\item[(a)] $\hH$ is $C^0$ on $B_{1+\delta}(0)\times B_\delta(0)$;
\item[(b)]
$z\mapsto \hH(z,t)$ is analytic on $B_{1+\delta}(0)$ for $t\in B_\delta(0)$.
\end{itemize}
\end{cor}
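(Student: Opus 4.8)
The plan is to glue together a local spectral decomposition of $\hR$ near each point $\bar\lambda_k$ with a global invertibility statement for $I-\hR$ away from those points, and then read off $\hH$ as the difference between the full resolvent and its explicit singular part. First I would work near $(\bar\lambda_k,0)$: by Proposition~\ref{prop:eig}, $1$ is a simple isolated eigenvalue of $\hR(\bar\lambda_k,0)=R$, and by Proposition~\ref{prop:Rt}(a),(b) the family $(z,t)\mapsto\hR(z,t)$ is analytic in $z$ and jointly continuous (indeed jointly $C^1$), so analytic perturbation theory produces $\rho>0$ such that on $B_\rho(\bar\lambda_k)\times B_\rho(0)$ the operator $\hR(z,t)$ has the simple isolated eigenvalue $\tau_k(z,t)$ with Riesz projection $\pi_k(z,t)$, the remaining spectrum staying at a fixed positive distance from $1$. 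This yields
\[
(I-\hR(z,t))^{-1}=(1-\tau_k(z,t))^{-1}\pi_k(z,t)+\hG_k(z,t),
\]
where $\hG_k$, the resolvent on the complementary spectral subspace, is uniformly bounded, analytic in $z$ and jointly $C^0$. Substituting Lemma~\ref{lem:mu} then gives, on a possibly smaller bidisc, $(I-\hR(z,t))^{-1}=(g_k(t)-z)^{-1}\tilde\pi_k(t)+H_k(z,t)+\hG_k(z,t)$ with $H_k$, $\hG_k$ analytic in $z$ and jointly $C^0$.

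Next I would handle the complementary "bulk" region. The set $\overline{B_1(0)}\setminus\bigcup_k B_{\rho/2}(\bar\lambda_k)$ is compact, $z\mapsto\hR(z,0)$ is continuous, and by Proposition~\ref{prop:eig} $1\notin\spec\hR(z,0)$ there; upper semicontinuity of the spectrum then gives a uniform bound $\|(I-\hR(z,0))^{-1}\|_{\cB_1(Y)}\le C_1$ on that set. A Neumann-series perturbation, using joint continuity of $(z,t)\mapsto\hR(z,t)$ and uniform continuity of $z\mapsto\hR(z,0)$ on $\overline{B_{1+\rho}(0)}$, then propagates invertibility a definite distance: there is $\delta\in(0,\rho/2)$ so that $(I-\hR(z,t))^{-1}$ exists, is uniformly bounded, analytic in $z$ and jointly $C^0$ on $\{\,|z|\le1+\delta,\ \min_k|z-\bar\lambda_k|\ge\rho\,\}\times B_\delta(0)$.

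Finally I would patch. Shrinking $\delta$ if necessary so that $B_{1+\delta}(0)\times B_\delta(0)$ is covered by the $q$ local bidiscs $B_\rho(\bar\lambda_k)\times B_\rho(0)$ together with the bulk region, and so that $g_k(t)\in B_{\rho/2}(\bar\lambda_k)$ for $t\in B_\delta(0)$ (Corollary~\ref{cor:g}(a)), I would simply \emph{define}
\[
\hH(z,t)=(I-\hR(z,t))^{-1}-\sum_{k=0}^{q-1}(g_k(t)-z)^{-1}\tilde\pi_k(t),
\]
which makes the claimed identity hold by fiat; the content is the regularity of $\hH$. On the bulk region every summand on the right, and the resolvent itself, is analytic in $z$ and jointly $C^0$ (each $g_k(t)-z$ is bounded away from $0$), so $\hH$ is too. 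On a chart $B_\rho(\bar\lambda_{k_0})\times B_\rho(0)$, the term $(g_{k_0}(t)-z)^{-1}\tilde\pi_{k_0}(t)$ cancels exactly the pole of $(I-\hR(z,t))^{-1}$ by the Step-1 identity, leaving $H_{k_0}+\hG_{k_0}-\sum_{k\neq k_0}(g_k(t)-z)^{-1}\tilde\pi_k(t)$, which is again analytic in $z$ and jointly $C^0$ since for $k\neq k_0$ the point $g_k(t)$ stays near $\bar\lambda_k$, hence away from $z\in B_\rho(\bar\lambda_{k_0})$. Because the various local descriptions all represent the one operator-valued function $\hH$, these regularity properties are global, giving (a) and (b).

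I expect the main obstacle to be the second step: Proposition~\ref{prop:eig} only controls $\spec\hR(z,0)$ for $|z|\le1$, while the corollary asserts the decomposition on the larger disc $B_{1+\delta}(0)$, so one must genuinely propagate invertibility a fixed amount past the unit circle and, simultaneously, check that a single $\delta$ serves on the bulk region and in all $q$ charts at once (this is where the exponential tails of $\sigma$, via the Lipschitz dependence of $\hR(\cdot,t)$ on $z$, do the work). The remaining steps are routine resolvent bookkeeping together with a direct invocation of Lemma~\ref{lem:mu}.
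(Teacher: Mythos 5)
Your proposal is correct and follows essentially the same route as the paper: a local spectral decomposition $(I-\hR)^{-1}=(1-\tau_k)^{-1}\pi_k+\hH$ near each $\bar\lambda_k$, substitution of Lemma~\ref{lem:mu}, and gluing with the resolvent on the complementary region. The only difference is that you spell out (via compactness, upper semicontinuity of the spectrum and a Neumann series) the propagation of invertibility past the unit circle to $|z|\le 1+\delta$, a point the paper's proof asserts without detail.
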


\begin{proof}
Let $t\in B_\delta(0)$. For $z\in B_\delta(\bar\lambda_k)$, the spectrum of $\hR(z,t)$ is bounded uniformly away from $1$  except for the simple eigenvalue $\tau_k(z,t)$ near $1$.
Hence 
\[
(I-\hR)^{-1}  =(1-\tau_k)^{-1}\pi_k+\hH \quad
\text{on $B_\delta(\bar\lambda_k)\times B_\delta(0)$},
\]
where $z\mapsto\hH(z,t)$ is analytic on $B_\delta(\bar\lambda_k)$
for $t\in B_\delta(0)$ and
$\hH$ is $C^0$ on $B_\delta(\bar\lambda_k)\times B_\delta(0)$.
Applying Lemma~\ref{lem:mu} and relabelling,
\[
(I-\hR(z,t))^{-1} =(g_k(t)-z)^{-1}\tilde\pi_k(t)  +\hH(z,t).
\]
In addition, $z\mapsto (I-\hR(z,t))^{-1}$ is analytic on $B_{1+\delta}(0)\setminus \bigcup_k B_\delta(\bar\lambda_k)$ and
$(z,t)\mapsto (I-\hR(z,t))^{-1}$ is $C^0$ on $(B_{1+\delta}(0)\setminus \bigcup_k B_\delta(\bar\lambda_k))\times B_\delta(0)$. Hence, we obtain the desired result on $B_{1+\delta}(0)\times B_\delta(0)$.
\end{proof}

\subsection{Completion of the proof of Lemmas~\ref{lem:key} and~\ref{lem:key2}}
\label{sec:pfbc}

Define for $t\in\R^d$, $n\ge1$,
\begin{align*}
T_{t,n} & :L^1(Y)\to L^1(Y), \qquad T_{t,n}v=1_YP_t^n(1_Yv).
\end{align*}
For $z\in\C$, $t\in\R^d$, define
\(
\hP(z,t)=\sum_{n=0}^\infty z^n P_t^n, \,
\hT(z,t)=\sum_{n=0}^\infty z^n T_t^n.
\)
By~\cite{Sarig02}, we have the renewal equation $\hT=(I-\hR)^{-1}$.
Also, by~\cite{Gouezel05}, 
$\hP=\hA\hT\hB+\hE$.

Throughout, we work on the domain $B_{1+\delta}(0)\times B_\delta(0)\subset\C\times\R^d$.
Applying the renewal equation, Corollary~\ref{cor:Rt} becomes
\begin{equation}  \label{eq:T}
\hT(z,t)= \sum_{k=0}^{q-1} (g_k(t)-z)^{-1}\tilde\pi_k(t)+\hH(z,t),
\end{equation} 
where $\tilde\pi_k,\,\hH:\cB_1(Y)\to\cB_1(Y)$ are families of bounded operators satisfying:
$\tilde \pi_k$ is $C^1$;
$\hH$ is $C^0$; 
$z\mapsto \hH(z,t)$ is analytic for all $t$.
Moreover,
$|\partial_j\tilde\pi_k(t+h)-\partial_j\tilde\pi_k(t)|\ll M_b(t,h)$.

The same argument as in Step~1 of Lemma~\ref{lem:mu} (using 
Propositions~\ref{prop:B} and~\ref{prop:A} instead of Proposition~\ref{prop:Rt}) shows that
\begin{align} \label{eq:A}
\hA(z,t) &= \tA_k(t)+(g_k(t)-z)\hH_{k,1}(z,t), \qquad \tA_k(t)=\hA(g_k(t),t), \\
\hB(z,t) &= \tB_k(t)+(g_k(t)-z)\hH_{k,2}(z,t), \qquad \tB_k(t)=\hB(g_k(t),t),
\label{eq:B}
\end{align} 
where $\tA_k,\,\hH_{k,1}:\cB_1(Y)\to L^1(\Delta)$ 
and $\tB_k,\,\hH_{k,2}:\cB(\Delta)\to\cB_1(Y)$ 
are families of bounded operators satisfying:
$\tA_k,\,\tB_k$ are $C^1$;
$\hH_{k,r}$ is $C^0$; 
$z\mapsto \hH_{k,r}(z,t)$ is analytic for all $t$; for $r=1,2$.
Moreover, by Propositions~\ref{prop:furtherA} and~\ref{prop:furtherB},
\begin{align*}
\|\partial_h\tA_k(t+h)-\partial_j\tA_k(t)\|_{\cB(Y_1)\mapsto L^1(\Delta)}\ll M_b(t,h), \\
\|\partial_h\tB_k(t+h)-\partial_j\tB_k(t)\|_{\cB(\Delta)\mapsto \cB_1(Y)}\ll M_b(t,h). 
\end{align*}

Combining~\eqref{eq:T},~\eqref{eq:A} and~\eqref{eq:B} together with
Proposition~\ref{prop:E}, 
\[
\hP(z,t)=\hA(z,t)\hT(z,t)\hB(z,t)+\hE(z,t)
 =\sum_{k=0}^{q-1}\Big((g_k(t)-z)^{-1}\tilde\pi_{k,1}(t)+\hH_{k,3}(z,t)\Big),
\]
where $\tilde \pi_{k,1},\,\hH_{k,3}:\cB(\Delta)\to L^1(\Delta)$ are families of bounded operators satisfying:
$\tilde \pi_{k,1}$ is $C^1$;
$\hH_{k,3}$ is $C^0$; 
$z\mapsto \hH_{k,3}(z,t)$ is analytic for all $t$;
and 
$|\partial_j\tilde\pi_{k,1}(t+h)-\partial_j\tilde\pi_{k,1}(t)|\ll M_b(t,h)$.

Let $\|\;\|$ denote $\|\;\|_{\cB(\Delta)\mapsto L^1(\Delta)}$.
An immediate consequence of the regularity properties of $\hH_{k,3}$ is that
there exists $\gamma\in(0,1)$ such that
the Taylor coefficients of $\hH_{k,3}$ satisfy
\(
\|(H_{k,3})_{t,n}\|\ll \gamma^n.
\)
Hence,
\[
\Big\|P_t^n-\sum_{k=0}^{q-1}g_k(t)^{-(n+1)}\tilde\pi_{k,1}(t)\Big\| \ll \gamma^n.
\]
By~\eqref{eq:Sz} and~\eqref{eq:Sz-bis}, $\|P_t^n-\sum_{k=0}^{q-1}\lambda_{k,t}^n\Pi_{k,t}\|\ll\gamma^n$ for some $\gamma\in(0,1)$.
Altogether, we have shown that there exist $\gamma\in(0,1)$, $C>0$ such that
\[
\Big\|\sum_{k=0}^{q-1}\Big(\lambda_{k,t}^n\Pi_{k,t}- g_k(t)^{-(n+1)}\tilde\pi_{k,1}(t)\Big)\Big\|
\le C\gamma^n
\quad\text{for all $t\in B_\delta(0)$, $n\ge1$.}
\]

Since $\lambda_{k,0}=g_k(0)=1$, we can shrink $\delta>0$ so that $|\lambda_{k,t}|>\gamma$ and $|g_k(t)^{-1}|>\gamma$.
It follows that
$\lambda_{k,t}=g_k(t)^{-1}$ and
$\Pi_{k,t}=g_k(t)^{-1}\tilde\pi_{k,1}(t)$.
The desired regularity properties of $\lambda_{k,t}$ and $\Pi_{k,t}:\cB(\Delta)\mapsto L^1(\Delta)$ now follow from those for $g_k$ and $\tilde\pi_{k,1}$,
completing the proof of Lemma~\ref{lem:key}.

\begin{pfof}{Lemma~\ref{lem:key2}}
By~\eqref{eq:SV},
\[
|g_0(t)-1|=|\lambda_{0,t}^{-1}|\,|1-\lambda_{0,t}|\ll |t|^2L(t).
\]
By~\eqref{eq:g} with $k=0$,
\[
|1-\tau_0(1,t)|\sim|c_0(1,t)|\,|g_0(t)-1|\ll |t|^2L(t).
\]
Hence by Proposition~\ref{prop:mu0}(c),
\[
|1-\tau_k(\bar\lambda_k,t)|\ll  |t|^2L(t)
\]
for all $k=0,\dots,q-1$. Applying~\eqref{eq:g} once more,
\[
|g_k(t)-\bar\lambda_k|\sim |c_k(\bar\lambda_k,t)^{-1}|\,|1-\tau_k(\bar\lambda_k,t)|
\ll |t|^2L(t).
\]
Finally,
\[
|\lambda_k-\lambda_{k,t}|=|g_k(t)^{-1}|\,|g_k(t)-\bar\lambda_k| \ll |t|^2L(t)
\]
completing the proof.
\end{pfof}

\section{Proof of the main result}
\label{sec:lldpf}

In this section, we complete the proof of Theorem~\ref{thm:lld}.
We continue to work on the one-sided tower $\Delta$.

Fix $\delta$ as in Section~\ref{sec:key}.
Let $r:\R^d\to\C$ be $C^2$ with $\supp r\subset B_\delta(0)$ and define
$A_{n,N}=\int_{\R^d} e^{-it\cdot N}r(t)P_t^n\,dt$.
By~\eqref{eq:Sz},
\[
A_{n,N}
=\sum_{k=0}^{q-1}\int_{B_\delta(0)} e^{-it\cdot N}r(t)\lambda_{k,t}^n\Pi_{k,t}\,dt+
\int_{B_\delta(0)} e^{-it\cdot N}r(t)Q_t^n\,dt.
\]
Following~\cite{MTsub}, the main step in the proof of Theorem~\ref{thm:lld}
is to estimate $\|A_{n,N}\|$.
Throughout this section, $\|\;\|$ denotes $\|\;\|_{\cB\mapsto L^1}$.

The next result suffices in the range $|N|\le  a_n$.

\begin{cor} \label{cor:smallN} There exists $C>0$ such that
$\|A_{n,N}\|\le C a_n^{-d}$ for all $n\ge1$, $N\in\Z^d$.
\end{cor}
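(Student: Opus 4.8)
The plan is to split $A_{n,N}$ using the spectral decomposition~\eqref{eq:Sz} and estimate each piece separately, bounding the integrand uniformly and integrating over $B_\delta(0)\subset\R^d$. The remainder term $\int_{B_\delta(0)} e^{-it\cdot N}r(t)Q_t^n\,dt$ is harmless: by~\eqref{eq:Sz-bis} we have $\|Q_t^n\|_{\cB'}\le C\gamma^n$ uniformly in $t\in B_\delta(0)$, and since $\cB\subset\cB'$ (and $\cB'\subset L^1$), this contributes at most $O(\gamma^n)$, which is certainly $O(a_n^{-d})$. So the real work is with the finitely many leading terms $\int_{B_\delta(0)} e^{-it\cdot N}r(t)\lambda_{k,t}^n\Pi_{k,t}\,dt$.

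For each $k$, the key is that $\|\Pi_{k,t}\|_{\cB\mapsto L^1}$ is bounded on $B_\delta(0)$ — this follows from Lemma~\ref{lem:key}, which gives that $t\mapsto\Pi_{k,t}:\cB\to L^1$ is $C^1$ on $B_\delta(0)$ (in particular continuous, hence bounded on the compact closure after shrinking $\delta$). Since $r$ is bounded, the integrand has norm $\ll |\lambda_{k,t}|^n$. Discarding the oscillatory factor $e^{-it\cdot N}$ (which only helps), I get
\[
\Big\|\int_{B_\delta(0)} e^{-it\cdot N}r(t)\lambda_{k,t}^n\Pi_{k,t}\,dt\Big\|
\ll \int_{B_\delta(0)} |\lambda_{k,t}|^n\,dt.
\]
Now apply Corollary~\ref{cor:int} with $\beta=0$, $r=0$: this yields $\int_{B_{2\delta}(0)}|\lambda_{k,t}|^n\,dt\ll a_n^{-d}$, hence the same bound on the smaller ball $B_\delta(0)$. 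Summing over the finitely many $k\in\{0,\dots,q-1\}$ preserves the bound.

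The only mild subtlety — and the step I would be most careful about — is the uniform boundedness of $\|\Pi_{k,t}\|_{\cB\mapsto L^1}$: one must invoke the $C^1$ regularity from Lemma~\ref{lem:key} on $B_\delta(0)$ and, if necessary, shrink $\delta$ so that the norm is bounded on the closure $\overline{B_\delta(0)}$. There is no genuine obstacle here, since Lemma~\ref{lem:key} delivers more than enough; the estimate is a clean application of~\eqref{eq:Sz}, \eqref{eq:Sz-bis}, Lemma~\ref{lem:key}, and Corollary~\ref{cor:int}. One should also note that the argument does not use $N$ at all beyond $|e^{-it\cdot N}|=1$, which is exactly why this bound is only effective in the range $|N|\le a_n$ (for larger $N$ one must integrate by parts to exploit the oscillation, which is done later).
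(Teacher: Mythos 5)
Your proposal is correct and follows essentially the same route as the paper: decompose $A_{n,N}$ via~\eqref{eq:Sz}, absorb the $Q_t^n$ term using~\eqref{eq:Sz-bis}, bound $\|\Pi_{k,t}\|_{\cB\mapsto L^1}$ uniformly via the regularity in Lemma~\ref{lem:key}, and control $\int_{B_\delta(0)}|\lambda_{k,t}|^n\,dt$ by Corollary~\ref{cor:int} with $\beta=r=0$. The paper's proof is just a compressed version of exactly this argument.
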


\begin{proof} By~\eqref{eq:Sz-bis} and Corollary~\ref{cor:int},
\(
\|A_{n,N}\|\ll \sum_{k=0}^{q-1}\int_{B_\delta(0)}|\lambda_{k,t}|^n\,dt +\gamma^n
\ll a_n^{-d}.
\)
\end{proof}

Recall from the proof of Corollary~\ref{cor:int} that
there is a constant $c>0$ such that
$\log|\lambda_{k,t}|\le -c|t|^2L(t)$.
Let $b>0$ be as in Lemma~\ref{lem:key} and define
$\eps_1=c/(2b)$.
We now focus on the range
\[
a_n\le |N|\le e^{\eps_1 n}.
\]

Choose $j$ so that $|N_j|=\max\{|N_1|,\dots,|N_d|\}$ and
set $h=\pi N_j^{-1}e_j$ (where $e_j\in\R^d$ is the $j$'th canonical unit vector).

\begin{prop} \label{prop:worst}
There exist $C>0$, $\delta>0$ such that
\[
\int_{B_{2\delta}(0)}\big|\partial_j(\lambda^n)_{k,t}-\partial_j(\lambda^n)_{k,t-h}\big|\,dt
\le 
C\frac{n}{a_n^d}\frac{\log|N|}{|N|}
\]
for all $n\ge1$, $|N|> \pi/\delta$ with $a_n\le |N|\le e^{\eps_1 n}$,
$k=0\dots,q-1$.
\end{prop}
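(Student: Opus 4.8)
The plan is to write $\partial_j(\lambda^n)_{k,t}=n\,\lambda_{k,t}^{n-1}\,\partial_j\lambda_{k,t}$ and exploit two facts. First, by Lemma~\ref{lem:key}, $\partial_j\lambda_{k,0}=0$, and evaluating the modulus-of-continuity estimate of that lemma at $t=0$ (where $M_b(0,h)=|h|L(h)$) gives $|\partial_j\lambda_{k,t}|\le C|t|L(t)$ on a neighbourhood of $0$. Second, as recalled in the proof of Corollary~\ref{cor:int}, $|\lambda_{k,t}|\le e^{-c|t|^2L(t)}$. Since $|h|=\pi/|N_j|\asymp|N|^{-1}$ and $L(h)=\log(|N_j|/\pi)\asymp\log|N|$, the asserted bound is, up to a constant, $n\,|h|\,L(h)\,a_n^{-d}$, and I establish it by splitting $B_{2\delta}(0)$ into the regions $\{|t|\le 2|h|\}$ and $\{|t|>2|h|\}$.

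On $\{|t|\le 2|h|\}$ it suffices to use $|\lambda_{k,t}|\le 1$ together with the monotonicity of $x\mapsto x\log(1/x)$ near $0$: this gives $|\partial_j(\lambda^n)_{k,t}|\le n|\partial_j\lambda_{k,t}|\ll n|t|L(t)\ll n|h|L(h)$, and since $|t-h|\le 3|h|$ there, also $|\partial_j(\lambda^n)_{k,t-h}|\ll n|t-h|L(t-h)\ll n|h|L(h)$. As this region has volume $\asymp|h|^d$, its contribution is $\ll n|h|^{d+1}L(h)$, which is $\ll n|h|L(h)\,a_n^{-d}$ precisely because $|h|\asymp|N|^{-1}\le a_n^{-1}$ in the range $|N|\ge a_n$.

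On $\{|t|>2|h|\}$ I write the integrand as $\mathrm{I}+\mathrm{II}$ with $\mathrm{I}=n(\lambda_{k,t}^{n-1}-\lambda_{k,t-h}^{n-1})\partial_j\lambda_{k,t}$ and $\mathrm{II}=n\lambda_{k,t-h}^{n-1}(\partial_j\lambda_{k,t}-\partial_j\lambda_{k,t-h})$. For $\mathrm{II}$, Lemma~\ref{lem:key} gives $|\mathrm{II}|\le Cn|\lambda_{k,t-h}|^{n-1}M_b(t-h,h)$; substituting $u=t-h$ and expanding the three terms of $M_b$, the first term contributes $n|h|L(h)\int|\lambda_{k,u}|^{n-1}\,du\ll n|h|L(h)\,a_n^{-d}$ by Corollary~\ref{cor:int} (this is the main term), while applying Corollary~\ref{cor:int} with $\beta=2$ and $\beta=4$ to the second and third terms bounds them by the same quantity because $L(h)\le\log|N|\le\eps_1 n$. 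The key point is the third term: the growing factor $|h|^{-b|u|^2L(u)}=e^{bL(h)|u|^2L(u)}$ is absorbed into $|\lambda_{k,u}|^{n-1}\le e^{-c(n-1)|u|^2L(u)}$ because $bL(h)\le b\eps_1 n=\tfrac{c}{2}n$, so the product is still $\le e^{-\frac{c}{4}n|u|^2L(u)}$ for $n$ large and one integrates via \cite[Lemma~2.3]{MTsub}. For $\mathrm{I}$, I use $|\partial_j\lambda_{k,t}|\ll|t|L(t)$, the mean value theorem to get $|\lambda_{k,t}-\lambda_{k,t-h}|\ll|h||t|L(t)$ (on this region the segment $[t-h,t]$ stays where $|\cdot|\asymp|t|$), and the telescoping identity to bound $|\lambda_{k,t}^{n-1}-\lambda_{k,t-h}^{n-1}|\ll n|h||t|L(t)\,e^{-c'(n-2)|t|^2L(t)}$, using $|t-h|^2L(t-h)\gg|t|^2L(t)$ for $|t|>2|h|$. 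This yields $|\mathrm{I}|\ll n^2|h|\,|t|^2L(t)^2\,e^{-c''n|t|^2L(t)}$ for $n$ large, whose integral is $\ll n^2|h|(\log n)^2a_n^{-(d+2)}$ by \cite[Lemma~2.3]{MTsub}; this is $\ll n|h|L(h)\,a_n^{-d}$ because $\log n\ll L(h)$, which again holds in the range $|N|\ge a_n$.

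The main obstacle is term $\mathrm{II}$: the factor $|h|^{-b|u|^2L(u)}$ coming from the Hölder modulus $M_b$ of $\partial_j\lambda_{k,t}$ grows, and it is dominated by $|\lambda_{k,u}|^{n-1}$ only throughout the range $|N|\le e^{\eps_1 n}$, which is exactly why $\eps_1=c/(2b)$ was chosen and why Sections~\ref{sec:key}--\ref{sec:lldpf} are restricted to this regime. A secondary point is that both the small-$|t|$ and large-$|t|$ estimates are sharp at the endpoints $|N|\asymp a_n$ and $|N|\asymp e^{\eps_1 n}$, so no slack is available.
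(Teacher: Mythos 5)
Your proposal is correct and follows essentially the same route as the paper: the same splitting of $\partial_j(\lambda^n)_{k,t}-\partial_j(\lambda^n)_{k,t-h}$ into a $\lambda^{n-1}$-difference term (handled by the mean value theorem and $|\partial_j\lambda_{k,t}|\ll|t|L(t)$) and a $\partial_j\lambda$-difference term (handled by the $M_b$ modulus from Lemma~\ref{lem:key}, with $|h|^{-b|t|^2L(t)}\le|\lambda_{k,t}|^{-n/2}$ via $\eps_1=c/(2b)$ and Corollary~\ref{cor:int}). Your extra decomposition of the domain at $|t|=2|h|$ is only a cosmetic variant of the paper's bound $|\partial_j\lambda_{k,u}|\ll|t|L(t)+\log|N|/|N|$ at the intermediate point.
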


\begin{proof}
Set $s=t-h$ and relabel so that $|\lambda_{k,s}|\le|\lambda_{k,t}|$. Then
$\int_{B_{2\delta}(0)}|\partial_j(\lambda^n)_t-\partial_j(\lambda^n)_{k,s}|\,dt=J+K$
where
\[
J  =n\int_{B_{2\delta}(0)}|\lambda_{k,t}^{n-1}-\lambda_{k,s}^{n-1}||\partial_j\lambda_{k,t}|\,dt,
\qquad 
K  =n\int_{B_{2\delta}(0)}|\lambda_{k,s}|^{n-1}|\partial_j\lambda_{k,t}-\partial_j\lambda_{k,s}|\,dt.
\]
By Lemma~\ref{lem:key},
\begin{align} \label{eq:K}
K   \ll 
\frac{n\log|N|}{|N|}\int_{B_{2\delta}(0)}|\lambda_{k,t}|^{n}\,dt
& +
\frac{n(\log|N|)^2}{|N|}\int_{B_{2\delta}(0)}|\lambda_{k,t}|^{n}|t|^2L(t)\,dt
\\ & +
\frac{n(\log|N|)^3}{|N|}\int_{B_{2\delta}(0)}|\lambda_{k,t}|^{n}|N|^{b|t|^2L(t)}|t|^4L(t)^2\,dt.  \nonumber
\end{align}
Since 
\[
|N|^{b|t|^2L(t)}=e^{b(\log|N|)|t|^2L(t)}
\le e^{b\eps_1n|t|^2L(t)}
= e^{\frac12 cn|t|^2L(t)}\le \lambda_{k,t}^{-n/2},
\]
it follows from Corollary~\ref{cor:int} that
\[
 \int_{B_{2\delta}(0)}|\lambda_{k,t}|^{n}|N|^{b|t|^2L(t)}|t|^4L(t)^2\,dt
 \le \int_{B_{2\delta}(0)}|\lambda_{k,t}|^{n/2}|t|^4L(t)^2\,dt
 \ll \frac{(\log n)^2}{a_n^{d+4}}.
\]
The other integrals in~\eqref{eq:K} are also estimated using Corollary~\ref{cor:int} and we obtain
\begin{align*}
K & \ll \frac{n}{a_n^d} \frac{\log|N|}{|N|} \Big\{
1+ \frac{\log|N| \log n}{a_n^2}
+ \frac{(\log|N|)^2 (\log n)^2}{a_n^4}
\Big\}
\ll  \frac{n}{a_n^d} \frac{\log|N|}{|N|} .
\end{align*}
(Here, we used that $\log|N|\ll n = a_n^2/\log n$.)

Next,
\(
|\lambda_{k,t}^{n-1}-\lambda_{k,s}^{n-1}|
\le n|\lambda_{k,t}|^{n-2}|\lambda_{k,t}-\lambda_{k,s}|
\)
so by the mean value theorem,
\[
|\lambda_{k,t}^{n-1}-\lambda_{k,s}^{n-1}|
\ll \frac{n}{|N|}|\lambda_{k,t}|^{n}|\partial_j\lambda_{k,u}|
\]
for some $u$ between $t$ and $s$.
By Lemma~\ref{lem:key},
\begin{align*}
& |\partial_j\lambda_{k,t}|
 =|\partial_j\lambda_{k,t}-\partial_j\lambda_{k,0}|
\ll M_b(0,t)=|t|L(t),
\\ & |\partial_j\lambda_{k,u}|
\ll |u|\log(1/|u|)\ll |t|L(t)+\tfrac{\log|N|}{|N|}.
\end{align*}
Hence by Corollary~\ref{cor:int},
\begin{align*}
J & \ll
\frac{n^2}{|N|}\int_{B_{2\delta}(0)}|\lambda_{k,t}|^{n}
|\partial_j\lambda_{k,t}| |\partial_j\lambda_{k,u}|\,dt
\\ &
\ll \frac{n^2}{|N|}\int_{B_{2\delta}(0)}|\lambda_{k,t}|^{n} |t|^2 L(t)^2\,dt
+ \frac{n^2\log|N|}{|N|^2}\int_{B_{2\delta}(0)}|\lambda_{k,t}|^{n}|t|L(t) \,dt
\\ & \ll
\frac{n}{a_n^d}\frac{\log|N|}{|N|}\Big(\frac{n\log^2 a_n}{a_n^2}\frac{1}{\log|N|}
+\frac{n\log a_n}{a_n}\frac{1}{|N|}\Big)
\\ & =
\frac{n}{a_n^d}\frac{\log|N|}{|N|}\Big(\frac{\log a_n}{\log|N|}
+\frac{a_n}{|N|}\Big)
\ll \frac{n}{a_n^d}\frac{\log|N|}{|N|}.
\end{align*}
This completes the proof.
\end{proof}

\begin{lemma} \label{lem:PiLLD}
There exists $C>0$ such that
\[
\Big\|\int_{B_\delta(0)} e^{-it\cdot N}r(t)\lambda_{k,t}^n\Pi_{k,t}\,dt \Big\|\le 
C\frac{n}{a_n^d} \frac{\log |N|}{|N|^2}
\]
for all $n\ge1$, $|N|> \pi/\delta$ with $a_n\le |N|\le e^{\eps_1 n}$,
$k=0,\dots,q-1$.
\end{lemma}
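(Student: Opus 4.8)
The plan is to combine one integration by parts in $t_j$ with the Caravenna--Doney ``shift trick'', reducing the estimate to Proposition~\ref{prop:worst} and Corollary~\ref{cor:int}; this follows the scheme of~\cite{MTsub}. Since $\supp r\subset B_\delta(0)$, set $\Phi(t)=r(t)\,\lambda_{k,t}^n\,\Pi_{k,t}$, which by Lemma~\ref{lem:key} is an operator-valued $C^1$ function on $B_\delta(0)$; extending by zero, we view it as $C^1$ on $B_{2\delta}(0)$ (shrinking the $\delta$ of Section~\ref{sec:key} if necessary). With $j$ and $h=\pi N_j^{-1}e_j$ as fixed just before Proposition~\ref{prop:worst} (so $|N_j|\ge|N|/\sqrt d$ and $|h|<\delta$ once $|N|>\pi\sqrt d/\delta$), integration by parts — there are no boundary terms and $\partial_{t_j}e^{-it\cdot N}=-iN_j e^{-it\cdot N}$ — gives
\[
\int_{B_\delta(0)} e^{-it\cdot N}\Phi(t)\,dt=-\frac{i}{N_j}\int_{B_\delta(0)} e^{-it\cdot N}\partial_j\Phi(t)\,dt .
\]
Since $e^{ih\cdot N}=e^{i\pi}=-1$, substituting $t\mapsto t-h$ in a copy of the right-hand integral and averaging yields
\[
\Big\|\int_{B_\delta(0)} e^{-it\cdot N}\Phi(t)\,dt\Big\|
\le\frac{1}{2|N_j|}\int_{B_{2\delta}(0)}\big\|\partial_j\Phi(t)-\partial_j\Phi(t-h)\big\|\,dt .
\]
So it suffices to show $\int_{B_{2\delta}(0)}\big\|\partial_j\Phi(t)-\partial_j\Phi(t-h)\big\|\,dt\ll\frac{n}{a_n^d}\,\frac{\log|N|}{|N|}$.

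To do this, expand $\partial_j\Phi=(\partial_j r)\,\lambda_{k,\cdot}^n\,\Pi_{k,\cdot}+r\,\partial_j(\lambda^n)_{k,\cdot}\,\Pi_{k,\cdot}+r\,\lambda_{k,\cdot}^n\,\partial_j\Pi_{k,\cdot}$ and, for each of the three summands, telescope the increment $ABC(t)-ABC(t-h)$ into three pieces, one increment at a time. The dominant contribution is
\[
\int_{B_{2\delta}(0)}\big|\partial_j(\lambda^n)_{k,t}-\partial_j(\lambda^n)_{k,t-h}\big|\,dt\ \sup_{t}\|\Pi_{k,t}\|\ \ll\ \frac{n}{a_n^d}\,\frac{\log|N|}{|N|}
\]
by Proposition~\ref{prop:worst} (with $r$, $\Pi_{k,\cdot}$ bounded), which already gives the required order. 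Every remaining piece is an integral of $|\lambda_{k,t}|^{n}$ or $|\lambda_{k,t}|^{n-1}$ times a product of: a factor $|h|\ll|N|^{-1}$, coming from increments of the $C^2$ function $r$, or, via the mean value theorem, of the $C^1$ functions $\lambda_{k,\cdot}$ and $\Pi_{k,\cdot}$ together with the bounds $|\partial_j\lambda_{k,t}|\ll|t|L(t)$ and $\|\partial_j\Pi_{k,t}\|\ll1$ from Lemma~\ref{lem:key}; possibly a factor $n$ from differentiating $\lambda_{k,t}^n$; and, where an increment of $\partial_j\lambda_{k,\cdot}$ or $\partial_j\Pi_{k,\cdot}$ occurs, the modulus-of-continuity bound $M_b(t-h,h)$ of Lemma~\ref{lem:key}.

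All such integrals are controlled by Corollary~\ref{cor:int}, after absorbing the factor $|N|^{b|t|^2L(t)}=e^{b(\log|N|)|t|^2L(t)}\le|\lambda_{k,t}|^{-n/2}$ arising through $M_b$ into half a power of $|\lambda_{k,t}|^n$ — valid precisely because $\log|N|\le\eps_1 n$ with $\eps_1=c/(2b)$, exactly as in the treatment of $K$ in the proof of Proposition~\ref{prop:worst}. Using in addition $\log|N|\ll n=a_n^2/\log n$ and $\log n\ll a_n\le|N|$, one checks that each of these pieces is $\ll\frac{n}{a_n^d}\frac{\log|N|}{|N|}$, hence no larger than the main term; multiplying by the prefactor $\tfrac12|N_j|^{-1}\ll|N|^{-1}$ then gives the lemma.

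The genuinely delicate estimate here, Proposition~\ref{prop:worst}, is already in hand, so the main obstacle is organisational: one must verify that none of the numerous cross terms produced by the two telescopings beats $\frac{n}{a_n^d}\frac{\log|N|}{|N|^2}$, and in particular that every occurrence of the growing factor $|N|^{b|t|^2L(t)}$ (via $M_b$) is paired with enough decay from $|\lambda_{k,t}|^n$ — which is exactly where the range restriction $a_n\le|N|\le e^{\eps_1 n}$ is used.
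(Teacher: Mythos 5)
Your argument is correct and follows essentially the same route as the paper: one integration by parts in $t_j$, the shift by $h=\pi N_j^{-1}e_j$, and a telescoping of the increments that reduces everything to Proposition~\ref{prop:worst}, Corollary~\ref{cor:int} and the $M_b$ bounds of Lemma~\ref{lem:key}, with the range restriction $|N|\le e^{\eps_1 n}$ used exactly where you say. The only (harmless) organisational difference is that the paper integrates by parts a second time on the $(\partial_j r)\,\lambda_{k,t}^n\Pi_{k,t}$ term to extract $N_j^{-2}$ directly, whereas you feed that term into the shift trick as well, using only the Lipschitz continuity of $\partial_j r$; both give $O(n\,a_n^{-d}|N|^{-2})$.
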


\begin{proof}
In this proof we abbreviate $B_\delta(0)$ to $B_\delta$ and suppress $dt$.
Let $I=\int_{B_\delta} e^{-it\cdot N}r(t)\lambda_{k,t}^n\Pi_{k,t}$. 
Integrating by parts, 
\[
I=\frac{1}{iN_j}
\int_{B_\delta} e^{-it\cdot N}\partial_jr(t)\lambda_{k,t}^n\Pi_{k,t}
+\frac{1}{iN_j}\int_{B_\delta} e^{-it\cdot N}r(t)\partial_j(\lambda^n\Pi)_{k,t}=I_1+I_2+I_3
\]
where
\begin{align*}
I_1 & =-\frac{1}{N_j^2}\int_{B_\delta} e^{-it\cdot N}\partial_j^2r(t)\lambda_{k,t}^n\Pi_{k,t}, \quad
I_2  = -\frac{1}{N_j^2}\int_{B_\delta} e^{-it\cdot N}\partial_jr(t)\partial_j(\lambda^n\Pi)_{k,t}, \\
I_3  &=
\frac{1}{iN_j}\int_{B_\delta} e^{-it\cdot N}r(t)\partial_j(\lambda^n\Pi)_{k,t}.
\end{align*}
Recall that $r$ is $C^2$ and that $t\mapsto\lambda_{k,t}$, $t\mapsto \Pi_{k,t}$ are $C^1$ by
Lemma~\ref{lem:key}.
Hence by Corollary~\ref{cor:int},
\[
\|I_1\|
\ll \frac{1}{|N|^2}\int_{B_\delta} |\lambda_{k,t}|^n
\ll \frac{1}{a_n^d}\frac{1}{|N|^2}, \qquad
\|I_2\|  \ll 
\frac{n}{|N|^2}\int_{B_\delta} |\lambda_{k,t}|^{n}
\ll \frac{n}{a_n^d}\frac{1}{|N|^2}.
\]

To estimate $I_3$, we use a modulus of continuity argument (see for instance~\cite[Chapter~1]{Katzn}).
Set $s=t-h$ where $h=\pi N_j^{-1}e_j$ and
notice that $I_3=
-\frac{1}{iN_j}\int_{B_{2\delta}} e^{-it\cdot N}r(s)\partial_j(\lambda^n\Pi)_{k,s}$.
Hence
\[
I_3  =
\frac{1}{2i|N_j|}\int_{B_{2\delta}} e^{-it\cdot N}(r(t)\partial_j(\lambda^n\Pi)_{k,t}-r(s)\partial_j(\lambda^n\Pi)_{k,s}).
\]
Setting
\(
I_4=\frac{1}{|N|}\int_{B_{2\delta}} |r(s)|\|\partial_j(\lambda^n\Pi)_{k,t}-\partial_j(\lambda^n\Pi)_{k,s}\|,
\)
we obtain
\begin{align*}
\|I_3\| & \ll 
\frac{1}{|N|}\int_{B_{2\delta}} |r(t)-r(s)|\|\partial_j(\lambda^n\Pi)_{k,t}\| +
I_4
\\ & \ll\frac{n}{|N|^2}\int_{B_{2\delta}} |\lambda_{k,t}|^{n} +I_4
\ll \frac{n}{a_n^d}\frac{1}{|N|^2}+I_4.
\end{align*}
Now,
\begin{align*}
I_4
& \ll \frac{1}{|N|}\int_{B_{2\delta}} \|\partial_j(\lambda^n)_{k,t}\Pi_{k,t}-\partial_j(\lambda^n)_{k,s}\Pi_{k,s}\|
+ \frac{1}{|N|}\int_{B_{2\delta}} \|\lambda_{k,t}^n\partial_j\Pi_{k,t}-\lambda_{k,s}^n\partial_j\Pi_{k,s}\|
\\
& \ll \frac{1}{|N|}\int_{B_{2\delta}} |\partial_j(\lambda^n)_{k,t}-\partial_j(\lambda^n)_{k,s}|\|\Pi_{k,t}\|
+
 \frac{1}{|N|}\int_{B_{2\delta}} |\partial_j(\lambda^n)_{k,s}|\|\Pi_{k,t}-\Pi_{k,s}\|
\\ & \qquad \qquad\qquad + \frac{1}{|N|}\int_{B_{2\delta}} |\lambda_{k,t}^n-\lambda_{k,s}^n|\|\partial_j\Pi_{k,t}\|
+ \frac{1}{|N|}\int_{B_{2\delta}} |\lambda_{k,s}|^n  \|\partial_j\Pi_{k,t}-\partial_j\Pi_{k,s}\|
\\
& \ll \frac{1}{|N|}\int_{B_{2\delta}} |\partial_j(\lambda^n)_{k,t}-\partial_j(\lambda^n)_{k,s}|
+
 \frac{n}{|N|^2}\int_{B_{2\delta}} |\lambda_{k,s}|^{n}
\\ & \qquad + \frac{n}{|N|^2}\int_{B_{2\delta}} |\lambda_{k,t}|^{n}
+ \frac{1}{|N|}\int_{B_{2\delta}} |\lambda_{k,s}|^n \|\partial_j\Pi_{k,t}-\partial_j\Pi_{k,s}\|  .
\end{align*}
To complete the proof, we show that
\(
I_4\ll
\frac{n}{a_n^d}\frac{\log|N|}{|N|^2}.
\)
The first integral on the right-hand side was estimated in Proposition~\ref{prop:worst} while the second and third
are dominated by 
$\frac{n}{a_n^d}\frac{1}{|N|^2}$.
The same calculation that was used for the integral $K$ in Proposition~\ref{prop:worst} shows that
$\int_{B_{2\delta}} |\lambda_{k,s}|^n \|\partial_j\Pi_{k,t}-\partial_j\Pi_{k,s}\|  
\ll \frac{1}{a_n^d}\frac{\log|N|}{|N|}$.
The desired estimate for $I_4$ follows.~
\end{proof}

\begin{cor} \label{cor:lld}
There exist $\eps_1>0$, $C>0$ such that
\[
\|A_{n,N}\|\le C\frac{n}{a_n^d} \frac{\log |N|}{1+|N|^2}
\quad\text{for all $n\ge1$, $N\in\Z^d$ with $|N|\le e^{\eps_1 n}$.}
\]
\end{cor}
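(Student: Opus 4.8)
The plan is to build the corollary from estimates already in hand, treating the two ranges $|N|\le a_n$ and $a_n\le|N|\le e^{\eps_1 n}$ separately.

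Suppose first that $|N|\le a_n$. Corollary~\ref{cor:smallN} already gives $\|A_{n,N}\|\ll a_n^{-d}$ with no restriction on $N$, so all that is needed is to check that the claimed right-hand side dominates $a_n^{-d}$ on this range. The elementary function $x\mapsto\frac{\log x}{1+x^2}$ is bounded below by a positive constant on $[1,2]$ and is decreasing on $[2,\infty)$, so its minimum over $1\le x\le a_n$ is $\gg\frac{\log a_n}{1+a_n^2}\gg\frac{\log n}{n\log n}=\frac1n$, using $a_n^2=n\log n$ and $\log a_n\ge\frac12\log n$. Hence $\frac{n}{a_n^d}\,\frac{\log|N|}{1+|N|^2}\gg a_n^{-d}\gg\|A_{n,N}\|$ for $1\le|N|\le a_n$; the case $N=0$ (where $\frac{\log|N|}{1+|N|^2}$ has order $1$) and the finitely many pairs $(n,N)$ with $n$ below a fixed threshold are absorbed into $C$.

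Suppose next that $a_n\le|N|\le e^{\eps_1 n}$; then $|N|>\pi/\delta$ for all large $n$, the finitely many remaining pairs again being absorbed into $C$. Here I would use the spectral decomposition~\eqref{eq:Sz},
\[
A_{n,N}=\sum_{k=0}^{q-1}\int_{B_\delta(0)} e^{-it\cdot N}r(t)\lambda_{k,t}^n\Pi_{k,t}\,dt+\int_{B_\delta(0)} e^{-it\cdot N}r(t)Q_t^n\,dt .
\]
The eigenprojection terms are controlled by Lemma~\ref{lem:PiLLD}, giving $\ll\frac{n}{a_n^d}\,\frac{\log|N|}{|N|^2}\ll\frac{n}{a_n^d}\,\frac{\log|N|}{1+|N|^2}$. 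For the remainder, \eqref{eq:Sz-bis} gives $\bigl\|\int_{B_\delta(0)} e^{-it\cdot N}r(t)Q_t^n\,dt\bigr\|\ll\gamma^n$; since $1+|N|^2\ll e^{2\eps_1 n}$ and $\log|N|$ is bounded below on this range, it suffices that $\gamma^n\ll\frac{n}{a_n^d}\,e^{-2\eps_1 n}$, which holds once $\eps_1$ is small enough that $\gamma e^{2\eps_1}<1$, because then $(\gamma e^{2\eps_1})^n$ decays exponentially while $\frac{n}{a_n^d}\gg\frac1{\log n}$.

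Adding the two ranges gives the asserted bound. I do not expect a genuine obstacle, since the substance is already in Corollary~\ref{cor:smallN} and Lemma~\ref{lem:PiLLD}. The one point that requires care is the choice of $\eps_1$: it must be small enough both for Lemma~\ref{lem:PiLLD} (hence Proposition~\ref{prop:worst}) to apply, i.e.\ $\eps_1\le c/(2b)$ as already fixed above, and for the remainder to be negligible, i.e.\ $\eps_1<\frac12\log(1/\gamma)$; taking $\eps_1$ equal to the smaller of the two makes everything consistent, and all other steps are routine comparisons.
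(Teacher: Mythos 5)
Your argument is correct and is essentially the paper's own proof: Corollary~\ref{cor:smallN} disposes of the small-$|N|$ range (the paper phrases that range as $n\gg(1+|N|^2)/\log|N|$, which via $a_n^2/\log a_n\sim n/2$ is the same split you make at $|N|=a_n$), while Lemma~\ref{lem:PiLLD} together with~\eqref{eq:Sz-bis} and a shrunken $\eps_1$ with $\gamma e^{2\eps_1}<1$ covers $a_n\le|N|\le e^{\eps_1 n}$. Your treatment of the remainder term and of the finitely many exceptional pairs with $|N|\le\pi/\delta$ matches the paper's.
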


\begin{proof}
By Corollary~\ref{cor:smallN},
$\|A_{n,N}\|\ll a_n^{-d}$.
Hence for $n\gg (1+|N|^2)/\log|N|$,
\[
\|A_{n,N}\|\ll a_n^{-d} \ll \frac{n}{a_n^d}\frac{\log|N|}{1+|N|^2}.
\]
Hence we can reduce to the case 
$n\le \frac12 (1+|N|^2)/\log|N|$.
Then we can suppose without loss that $|N|> \pi/\delta$. 
In this way, we reduce to proving 
$\|A_{n,N}\|\ll \frac{n}{a_n^d} \frac{\log |N|}{|N|^2}$
under the constraints
\[
|N|> \pi/\delta,\qquad |N|\le e^{\eps_1 n}, \qquad
n\le |N|^2/\log|N|.
\]
Since $a_n^2/\log a_n\sim n/2$, the last constraint can be weakened to
$a_n^2/\log a_n\le |N|^2/\log|N|$, equivalently $a_n\le |N|$.

By~\eqref{eq:Sz-bis},
there exists $\gamma\in(0,1)$ such that
\(
\|\int_{B_\delta(0)} e^{-it\cdot N}r(t)Q_t^n\,dt \|\ll 
\gamma^n .
\)
Together with Lemma~\ref{lem:PiLLD}, this implies that
\[
\|A_{n,N}\|\ll
\frac{n}{a_n^d} \frac{\log |N|}{|N|^2}
+\frac{1}{|N|^2} \gamma^n|N|^2.
\]
Shrinking $\eps_1$ if necessary,
$\gamma^n|N|^2\le \gamma^n e^{2\eps_1 n}\le
\gamma^{n/2} \ll \frac{n}{a_n^d}$,
and so $\|A_{n,N}\|\ll
\frac{n}{a_n^d} \frac{\log |N|}{|N|^2}$.~
\end{proof}

\begin{pfof}{Theorem~\ref{thm:lld}}
By Lemma~\ref{lem:bigN}, it remains to consider the range $\log|N|\le\eps_1 n$.
By~\cite[Lemma~3.9]{MTsub}, there exists an even $C^2$ function $r:\R^d\to\R$ supported in $B_\delta(0)$ such that
\[
1_{\{\kappa_n=N\}}\le \int_{\R^d}e^{-it\cdot N}r(t)e^{it\cdot \kappa_n}\,dt
\]
for $n\ge1$, $N\in\Z^d$.
Hence 
\[
P^n 1_{\{\kappa_n=N\}}\le \int_{\R^d}e^{-it\cdot N}r(t)P^n e^{it\cdot \kappa_n}\,dt=A_{n,N} \, 1_\Delta.
\]
 It follows that
\(
\mu_\Delta (\kappa_n=N)=\int_\Delta P^n 1_{\{\kappa_n=N\}}\,d\mu_\Delta
\ll \|A_{n,N}\|.
\)
By Corollary~\ref{cor:lld}, we obtain the desired estimate  for
$\log|N|\le\eps_1 n$.
\end{pfof}

\section{LLD for nonuniformly hyperbolic systems modelled by Young towers}
\label{sec:abs}

In this section, we state and prove an abstract version of Theorem~\ref{thm:lld}
for systems modelled by a Young tower with exponential towers for a general class of observables~$\kappa$. The observables take values in $\Z^d$ where there is no restriction on the value of $d\ge1$.

Let $(T,M,\mu)$ be a general nonuniformly hyperbolic map modelled by a two-sided Young tower $\Delta$ with exponential tails (as in Section~\ref{sec:setup}).
 Let $\kappa:M\to\Z^d$ be 
an integrable observable with $\int_M \kappa\,d\mu=0$.
Define the lifted observable
$\hat\kappa=\kappa\circ\pi:\Delta\to\Z^d$.
We require that $\hat\kappa$ is constant on $\bar\pi^{-1}(a\times\{\ell\})$ for each $a\in\alpha$, $\ell\in\{0,\dots,\sigma(a)$.  Then $\hat\kappa$ projects to an observable $\bar\kappa:\bar\Delta\to\Z^d$ constant on the partition elements $a\times\{\ell\}$ of the one-sided tower $\bar\Delta$.

Define $P_t$, $\lambda_{k,t}$ and so on as in Section~\ref{sec:setup}.
Properties~\eqref{eq:Sz} and~\eqref{eq:Sz-bis} remain valid.
Our further assumptions in the abstract setting are
that there exist continuous slowly varying\footnote{So $\lim_{t\to\infty}\ell_1(\lambda t)/\ell_1(t)=1$ for all $\lambda>0$, and similarly for $\ell_2$.} functions $\ell_1,\,\ell_2:[0,\infty)\to(0,\infty)$ and a constant $\delta>0$ such that
\begin{align} \label{eq:ass1}
& \mu(|\kappa|> x)  \le  x^{-2}\ell_1(x)
\quad\text{for all $x>1$} \\[.75ex]
& |\lambda_{0,t}-1|  \le  |t|^2\ell_2(1/|t|)
\quad\text{for all $t\in B_\delta(0)$.} \label{eq:ass2}
\end{align}
Define the slowly varying function
$\tilde\ell_1(x)=\int_1^{1+x} u^{-1}\ell_1(u/\log u)\,du$.\footnote{To optimise the results, we should take $\tilde\ell_1(x)=\int_1^{1+x} u^{-1}(\log u)^2\ell_1(u/\log u)\,du$. Then $\tL(t)=\tilde\ell_1(1/|t|)$ below but the formula for $\tM_b$ is much more complicated.}
We require that there is a constant $C>0$ such that 
\begin{equation} \label{eq:ass3}
(\log x)^2\tilde\ell_1(x)\le C\ell_2(x)
\quad\text{for all $x>1$}.
\end{equation}
Choose $a_n$ so that 
\[
na_n^{-2}\ell_2(a_n)\sim1.
\]

\begin{thm}[LLD in abstract setting]
\label{thm:abs} 
Let $d\ge1$.
There exist $C>0$ and a slowly varying function $\ell_3$ (depending on $\ell_1$, $\ell_2$ and $d$) such that
\[
\mu(\kappa_n=N)\le C
\frac{n}{a_n^d} \, \frac{\ell_3(|N|)}{1+|N|^2}
\quad\text{for all  $n\ge1$, $N\in\Z^d$.}
\]
\end{thm}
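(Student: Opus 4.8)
The plan is to run the entire argument of Sections~\ref{sec:setup}--\ref{sec:lldpf} again, replacing the two billiard-specific inputs --- Proposition~\ref{prop:psi} and the asymptotic~\eqref{eq:SV} --- by the abstract hypotheses~\eqref{eq:ass1}--\eqref{eq:ass3}, and replacing every explicit logarithmic factor by a suitable slowly varying function. As in Section~\ref{sec:key}, since $\mu(\kappa_n=N)=\bar\mu_\Delta(\bar\kappa_n=N)$ one works on the one-sided tower $\bar\Delta$. One splits into the ranges $n\le\omega\log|N|$ and $\log|N|\le\eps_1 n$, where $\eps_1$ is the small constant produced by the main estimate and $\omega=\eps_1^{-1}$, so that the two ranges cover all $n\ge1$.

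In the range $n\le\omega\log|N|$ I would not try to reproduce Lemma~\ref{lem:bigN}, whose sharpness rests on the joint tail bound~\cite[Lemma~16]{SV07} that is unavailable abstractly; a crude union bound suffices. Since $\kappa_n=N$ forces $|\kappa|\circ T^j\ge|N|/n$ for some $j<n$, hypothesis~\eqref{eq:ass1} gives $\mu(\kappa_n=N)\le n\,\mu(|\kappa|\ge|N|/n)\ll n^3|N|^{-2}\ell_1(|N|/n)$. Because $n\le\omega\log|N|$, the argument $|N|/n$ differs from $|N|$ by only a logarithmic factor, so by Potter's bounds $\ell_1(|N|/n)\ll(\log|N|)^{\eta}\ell_1(|N|)$ for any $\eta>0$; and, taking $(a_n)$ increasing as one may and using $na_n^{-2}\ell_2(a_n)\sim1$ with Potter's bounds, $a_n^d\ll(\log|N|)^{d/2+\eta}\ell_2(\log|N|)^{d/2}$ on this range. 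Hence $\mu(\kappa_n=N)\ll\frac{n}{a_n^d}\frac{\ell_3^{(1)}(|N|)}{|N|^2}$ with $\ell_3^{(1)}(x)=(\log x)^{c}\ell_1(x)\ell_2(\log x)^{d/2}$ for a suitable exponent $c$, which is a slowly varying function of $x$.

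The range $\log|N|\le\eps_1 n$ carries the real content, and here one reproduces Sections~\ref{sec:pfkey}--\ref{sec:lldpf}. The first step is the abstract analogue of Proposition~\ref{prop:psi}: if $\psi(y)>m$ and $\sigma(y)=k$ then $|\bar\kappa(y,\ell)|>m/k$ for some $\ell<k$, and since $\sigma=k>\ell$ one has $\mu_Y\big(|\bar\kappa(\cdot,\ell)|>m/k,\ \sigma>\ell\big)\le\bar\sigma\,\bar\mu_\Delta(|\bar\kappa|>m/k)\le\bar\sigma(m/k)^{-2}\ell_1(m/k)$; summing over $\ell<k$ and then over $k$, one splits at $k\asymp\log m$, using the exponential tails of $\sigma$ for $k$ large and~\eqref{eq:ass1} with Potter's bounds for $k$ small. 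This gives $\mu_Y(\psi>m)\le m^{-2}\ell_\psi(m)$ with $\ell_\psi$ slowly varying; more to the point, keeping track of the joint law of $\psi$ and $\sigma$, the resummed quantities actually occurring are controlled by $\tilde\ell_1$ as in the footnote to~\eqref{eq:ass3}, so that $\int_Y\min\{|t|^2\psi^2,|t|\psi\}\,d\mu_Y\ll|t|^2(\log(1/|t|))^2\tilde\ell_1(1/|t|)$ (the quantity denoted $\tL(t)$ below). With this in hand, Propositions~\ref{prop:Ruv}--\ref{prop:E} go through verbatim (they need only the integrability of $\psi$, which follows, and the exponential tails of $\sigma$); Propositions~\ref{prop:furtherR}--\ref{prop:furtherB} go through with the modulus-of-continuity bound $M_b$ replaced by an abstract $\tM_b$ written in terms of $\ell_\psi$, $\tilde\ell_1$ and $\ell_2$; and the spectral analysis of Section~\ref{sec:specR} (Propositions~\ref{prop:eig}--\ref{prop:mu0}, Corollary~\ref{cor:g}, Lemma~\ref{lem:mu}, Corollary~\ref{cor:Rt}) carries over with~\eqref{eq:SV} replaced by~\eqref{eq:ass2}. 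The sole place hypothesis~\eqref{eq:ass3} enters is the analogue of Proposition~\ref{prop:mu0}(b): the $\psi$-contribution there is $\ll|t|^2(\log(1/|t|))^2\tilde\ell_1(1/|t|)$, which by~\eqref{eq:ass3} is $\ll|t|^2\ell_2(1/|t|)$, so the spectral data $\tau_k(\bar\lambda_k,t)$, $g_k(t)$ and $\lambda_{k,t}$ are all controlled by $\ell_2$ alone. One thus obtains the abstract forms of Lemma~\ref{lem:key}, of Lemma~\ref{lem:key2} ($|\lambda_k-\lambda_{k,t}|\ll|t|^2\ell_2(1/|t|)$, hence $|\lambda_{k,t}|\le\exp(-c|t|^2\ell_2(1/|t|))$ after shrinking $\delta$), and of Corollary~\ref{cor:int}, the last being the Laplace-type estimate $\int_{B_{2\delta}(0)}|t|^\beta\ell(1/|t|)^r|\lambda_{k,t}|^n\,dt\ll\ell(a_n)^r a_n^{-d-\beta}$ for the chosen $a_n$, proved by the substitution $t=a_n^{-1}u$ using slow variation of $\ell$ and $\ell_2$. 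Finally Section~\ref{sec:lldpf} is rerun: $\|A_{n,N}\|\ll a_n^{-d}$ always (Corollary~\ref{cor:smallN}); and for $a_n\le|N|\le e^{\eps_1 n}$, choosing $\eps_1=c/(2b)$ exactly as in the concrete case so that $|N|^{b|t|^2\ell_2(1/|t|)}\le|\lambda_{k,t}|^{-n/2}$, the integration-by-parts plus modulus-of-continuity argument of Proposition~\ref{prop:worst} and Lemma~\ref{lem:PiLLD} yields $\|A_{n,N}\|\ll\frac{n}{a_n^d}\frac{\ell_3^{(2)}(|N|)}{|N|^2}$ for a slowly varying $\ell_3^{(2)}$ built from $\ell_\psi$, $\tilde\ell_1$ and $\ell_2$, whence $\mu(\kappa_n=N)\le\|A_{n,N}\|$ by~\cite[Lemma~3.9]{MTsub}. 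Setting $\ell_3=\ell_3^{(1)}+\ell_3^{(2)}$, arranged so that $\ell_3\gtrsim\ell_2$ (so Corollary~\ref{cor:smallN} disposes of $|N|\le a_n$) finishes the proof.

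I expect the main difficulty to be bookkeeping rather than anything conceptually new: faithfully carrying the slowly varying factors through the long chain Section~\ref{sec:pfkey}$\,\to\,$Section~\ref{sec:lldpf}, in particular propagating $\tM_b$ through Lemma~\ref{lem:mu}, Corollary~\ref{cor:Rt} and Lemma~\ref{lem:PiLLD}; invoking~\eqref{eq:ass3} at precisely the analogue of Proposition~\ref{prop:mu0}(b) and nowhere else; and keeping the use of Potter's bounds confined to regimes where the argument is shifted by only logarithmic factors, so the resulting corrections are harmless. Verifying that the final $\ell_3$ --- a product of $\ell_\psi$, a power of $\log$, and an $\ell_2$-at-$\log$ factor --- is genuinely slowly varying is routine but worth spelling out.
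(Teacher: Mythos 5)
Your proposal is correct and takes essentially the same approach as the paper's proof in Section~\ref{sec:abs}: the same reduction to the one-sided tower, the same tail estimate for $\psi$ (Proposition~\ref{prop:psiabs}), the same replacement of $M_b$ and $L$ by slowly varying analogues (Lemma~\ref{lem:keyabs}, Corollary~\ref{cor:intabs}) with~\eqref{eq:ass2} in place of~\eqref{eq:SV} and~\eqref{eq:ass3} supplying the comparison $\tL\ll\ell_2(1/|\cdot|)$ that lets $\eps_1$ be chosen as in the concrete case, followed by rerunning Section~\ref{sec:lldpf} up to slowly varying factors. The only deviation is the range $n\le\omega\log|N|$, where you use a union bound on $\max_{j<n}|\kappa|\circ T^j$ together with Potter's bounds instead of the paper's truncated $L^r$-moment argument (Lemma~\ref{lem:bigNabs}); both are equally crude and yield a slowly varying numerator, so the difference is immaterial.
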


\begin{rmk} The slowly varying function $\ell_3$ can be determined by modifying the proof of Theorem~\ref{thm:lld}. Some of the steps are indicated below.

In the case of billiards,
assumptions~\eqref{eq:ass1} and~\eqref{eq:ass2} hold with $\ell_1\equiv1$ and $\ell_2(x)=\log x$.
We note that even with these $\ell_1$, $\ell_2$ and $d\le 2$, obtaining $\ell_3(x)=\log x$ and $\ell_3(x)=\log x \log\log x$ in Theorem~\ref{thm:lld} requires extra structure for billiards beyond the abstract setting of Theorem~\ref{thm:abs}.
This extra structure was used in Proposition~\ref{prop:psi} and Lemma~\ref{lem:bigN}.
Similarly, assumption~\eqref{eq:ass3} is not required in the billiard setting due to the extra structure.
\end{rmk}

\begin{rmk} (a) In the simpler situation of Gibbs-Markov maps studied in~\cite{MTsub}, the underlying assumption is that $\mu(|\kappa|>x)\sim x^{-2}\ell_1(x)$
and a consequence is that $1-\lambda_{0,t}\sim c|t|^2\ell_2(1/|t|)$ where $c>0$ and 
$\ell_2(x)=1+\int_1^{1+x}\ell_1(u)/u\,du$.
Moreover, $\ell_3=\ell_2$.
\\[.75ex]
\noindent (b)
As in~\cite{MTsub}, the proof of Theorem~\ref{thm:abs}
does not rely on aperiodicity assumptions and hence the result applies in situations where the local limit theorem fails.
(In fact, it may even be the case that $a_n^{-1}\kappa_n$ fails to converge in distribution in the generality of Theorem~\ref{thm:abs}.)
\\[.75ex]
\noindent (c) More generally, one could consider LLD in the abstract setting with
\begin{align*}
& \mu(|\kappa|> x)  \le C x^{-\alpha}\ell_1(x)
\quad\text{for all $x>1$}, \\[.75ex]
& |\lambda_{0,t}-1|  \le C |t|^\alpha\ell_2(1/|t|)
\quad\text{for all $t\in B_\delta(0)$},
\end{align*}
for $\alpha\in(0,2)$, where the underlying limit laws are stable laws (rather than normal distributions with nonstandard normalisation).
We already mentioned that the study of such stable LLD started with~\cite{CaravennaDoney} and~\cite{Berger19}
in the i.i.d.\ case for $d=1$, extended to $d\ge2$~\cite{Berger19b}.
The Gibbs-Markov case was studied in~\cite{MTsub} for $\alpha\in(0,1)\cup(1,2]$
and general $d\ge1$.   
We expect that Theorem~\ref{thm:abs},
in the abstract setting where $M$ is modelled by a Young tower with exponential tails,
extends to the cases $\alpha\in(0,1)\cup(1,2)$ with minor (and obvious) modifications.  However, for purposes of readability we do not pursue this extension here.
\end{rmk}

In the remainder of this section, we sketch the proof of Theorem~\ref{thm:abs}.
Again, the range
$n\ll \log|N|$ is handled at the level of $T:M\to M$ and $\kappa:M\to\Z^d$.
 Lemma~\ref{lem:bigN} is replaced by
\begin{lemma} \label{lem:bigNabs}
Let $d\ge1$, $\omega>0$, $\eps>0$.
There exists $C>0$ such that
\[
\mu(\kappa_n=N)\le C\frac{n}{a_n^d}\frac{\ell_1(|N|)(\log|N|)^{\frac{d}{2}+1+\eps}}{|N|^2}
\]
for all $n\ge1$, $N\in\Z^d$ with $n\le\omega\log|N|$.
\end{lemma}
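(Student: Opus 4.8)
The plan is to mirror the proof of Lemma~\ref{lem:bigN}, replacing the explicit billiard tail $\mu(|\kappa|>n)\ll n^{-2}$ by the slowly varying bound \eqref{eq:ass1}. First I would set $S_n=\sum_{j=0}^{n-1}|\kappa|\circ T^j$ and $M_n=\max_{j<n}|\kappa|\circ T^j$ exactly as before. The crude union bound gives $\mu(M_n>q)\le n\mu(|\kappa|>q)\le n q^{-2}\ell_1(q)$, and hence
\[
\mu(\kappa_n=N)\le \mu(S_n\ge|N|)\le \mu\big(M_n>\tfrac{|N|}{2}\big)+\mu\big(S_n\ge|N|,\,M_n\le\tfrac{|N|}{2}\big)\ll \frac{n\,\ell_1(|N|)}{|N|^2}+\mu\big(S_n\ge|N|,\,M_n\le\tfrac{|N|}{2}\big),
\]
using that $\ell_1$ is slowly varying so $\ell_1(|N|/2)\asymp\ell_1(|N|)$. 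The first term is already of the required shape once we check $a_n^d\ll(\log|N|\log\log|N|)^{d/2}$ or whatever the analogue is here (see the last paragraph).

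Next I would bound the ``spread out'' term. On the event $\{S_n\ge|N|,\,M_n\le|N|/2\}$ there are two distinct indices $i\ne j$ with $|\kappa|\circ T^i\ge|N|/n$ and $|\kappa|\circ T^j\ge|N|/(2(n-1))\ge c|N|^{4/5}$ (using $n\le\omega\log|N|$). Summing over $i,j$ and using $T$-invariance to reduce to $\mu(|\kappa|=p,\,|\kappa|\circ T^r\ge c p^{4/5})$ for $|r|<\omega'\log p$, $p\ge|N|/n$, I would invoke whatever two-scale tail estimate plays the role of \cite[Lemma~16]{SV07} in the abstract setting — here one needs a hypothesis of the form $\mu(|\kappa|=p,\,|\kappa|\circ T^r\ge cp^{4/5})\ll p^{-\eta}\mu(|\kappa|=p)$ uniformly in small $r$; granting this (it should be an available assumption or follow from the Young tower structure via $\psi$), the sum telescopes to $\ll n^{3+\eta}|N|^{-(2+\eta)}\ell_1(|N|)(\log|N|)\ll \frac{n}{|N|^2}\ell_1(|N|)$ since $n^{2+\eta}(\log|N|)/|N|^\eta$ is bounded under $n\le\omega\log|N|$.

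Combining, $\mu(\kappa_n=N)\ll n\,\ell_1(|N|)/|N|^2$. To recast this in the target form I would write $n/|N|^2 = (n/a_n^d)(a_n^d/|N|^2)$ and bound $a_n^d$: from $na_n^{-2}\ell_2(a_n)\sim1$ one gets $a_n^2\sim n\ell_2(a_n)$, and since $\ell_2$ is slowly varying and $n\le\omega\log|N|$, one deduces $a_n^2\ll \log|N|\cdot\ell_2(\log|N|)\ll(\log|N|)^{1+\eps}$ for any $\eps>0$ (using $\ell_2(x)=o(x^\eps)$), hence $a_n^d\ll(\log|N|)^{d/2+\eps}$; absorbing this and a further $(\log|N|)$ for safety gives the stated exponent $\frac d2+1+\eps$.

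The main obstacle is the two-scale tail estimate: in the concrete billiard case this is the geometric input \cite[Lemma~16]{SV07}, but in the abstract setting one must either assume it outright or derive it from $\mu_Y(\psi>n)\ll n^{-2}$ (Proposition~\ref{prop:psi}) together with the Young tower return-time structure. I would state it as an explicit hypothesis (analogous to a ``no two big jumps at once'' condition) and then the rest is the bookkeeping above. A minor technical point is keeping the slowly varying factors under control through the telescoping sum, but this is routine given Karamata's theorem and the fact that $\ell_1$ varies slowly.
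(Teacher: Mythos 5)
Your proposal hinges on the two-scale estimate $\mu(|\kappa|=p,\,|\kappa|\circ T^r\ge cp^{4/5})\ll p^{-\eta}\mu(|\kappa|=p)$, and this is exactly where it breaks: that bound is the geometric input \cite[Lemma~16]{SV07} about corridors in the infinite-horizon billiard, it is not among the abstract hypotheses~\eqref{eq:ass1}--\eqref{eq:ass3}, and it does not follow from the tower structure or from the tail of $\psi$ (Proposition~\ref{prop:psiabs} controls a single excursion, not the joint distribution of two large values of $\kappa$ at nearby times). Adding it as an extra hypothesis, as you suggest, changes the statement of Lemma~\ref{lem:bigNabs}; indeed the remark after Theorem~\ref{thm:abs} says explicitly that the extra structure used in Lemma~\ref{lem:bigN} is \emph{not} assumed in the abstract setting, which is why the abstract bound carries the weaker factor $(\log|N|)^{\frac d2+1+\eps}$. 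Note also that the obvious repair without that input --- a single-jump union bound, $\mu(S_n\ge|N|)\le n\,\mu(|\kappa|\ge|N|/n)\ll n^3|N|^{-2}\ell_1(|N|)(\log|N|)^{\eps}$ --- only yields exponent $\frac d2+2+\eps$, short of the stated claim.

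The paper's proof takes a different, purely tail-based route that uses only~\eqref{eq:ass1}: truncate, setting $\tilde\kappa=\min\{|\kappa|,|N|\}$, split off the event $\{M_n>|N|\}$ (cost $n|N|^{-2}\ell_1(|N|)$ as in your first step), and bound the remaining piece by Markov's inequality in $L^r$ for $r>2$: $\mu(\tilde\kappa_n\ge|N|)\le n^r\|\tilde\kappa\|_r^r/|N|^r$, where Karamata gives $\|\tilde\kappa\|_r^r\ll|N|^{r-2}\ell_1(|N|)$. This yields $\mu(\kappa_n=N)\ll n^r|N|^{-2}\ell_1(|N|)$, and since $a_n$ is regularly varying of index $\tfrac12$ and $n\le\omega\log|N|$, one has $n^{r-1}a_n^d\ll n^{\frac d2+1+\eps}\ll(\log|N|)^{\frac d2+1+\eps}$ for $r$ close to $2$, which is precisely where the stated exponent comes from. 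Your final bookkeeping ($a_n^d\ll(\log|N|)^{d/2+\eps}$) is fine, but the heart of the lemma in the abstract setting is this truncation-plus-$L^r$ argument, not the two-big-jumps decomposition.
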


\begin{proof}
Define $\tilde\kappa=\tilde\kappa(N)=\min\{|\kappa|,|N|\}$
and $M_n=\max_{0\le j\le n-1}|\kappa|\circ T^j$.
We use $|x|=\max_{j=1,\dots,d}|x_j|$ so that $|\tilde\kappa|$ is integer-valued.

Now,
\[
\mu(|\kappa_n|\ge |N|)\le 
\mu(|\kappa_n|\ge |N|,\,M_n\le |N|)+
\mu(M_n> |N|).
\]
Note that
\[
\mu(M_n> |N|)
\le
\sum_{j=0}^{n-1}\mu(|\kappa|\circ T^j> |N|)
=n\mu(|\kappa|> |N|)\ll n|N|^{-2}\ell_1(|N|).
\]
Next, for any $r>2$,
\[
\mu(|\kappa_n|\ge |N|,\,M_n\le |N|)\le 
\mu(\tilde\kappa_n\ge |N|)\le 
\|\tilde\kappa_n\|_r^r/|N|^r
\le n^r\|\tilde\kappa\|_r^r/|N|^r.
\]
By resummation and~\eqref{eq:ass1},
\[
\|\tilde\kappa\|_r^r=\sum_{j=1}^\infty j^r\mu(|\tilde\kappa|=j)
\le \sum_{j=1}^{|N|} j^r\mu(|\kappa|=j)
\ll \sum_{j=1}^{|N|} j^{r-1}\mu(|\kappa|>j)
\ll \sum_{j=1}^{|N|} j^{r-3}\ell_1(j),
\]
so by Karamata, $\|\tilde\kappa\|_r^r\ll |N|^{r-2}\ell_1(|N|)$.
Hence
\[
\mu(\kappa_n=N)\le \mu(|\kappa_n|\ge |N|)\le n^r|N|^{-2}\ell_1(|N|)
= n a_n^{-d}|N|^{-2}\ell_1(|N|)n^{r-1}a_n^d.
\]
Since $a_n$ is regularly varying of index $\frac12$, and $r>2$ is arbitrary, it follows that 
$n^{r-1}a_n^d\ll 
n^{\frac{d}{2}+1+\eps}
\ll (\log|N|)^{\frac{d}{2}+1+\eps}$.
\end{proof}

The remainder of the proof of Theorem~\ref{thm:abs} is carried out on the one-sided tower. 
As in Section~\ref{sec:setup}, we define $\psi(y)=\sum_{\ell=0}^{\sigma(y)-1}|\kappa(y,\ell)|$. The analogue of Proposition~\ref{prop:psi} is:

\begin{prop}  \label{prop:psiabs}
There exist $C,\,n_0>1$ such that
\[
\mu_Y(\psi>n)\le C n^{-2}(\log n)^2\ell_1( n/\log n)
\quad\text{for all $n\ge n_0$.}
\]
In particular,
$\psi\in L^r(Y)$ for all $r<2$.
\end{prop}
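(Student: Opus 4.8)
The plan is to reduce the tail estimate for $\psi$ on the base $Y$ to the tail estimate for $|\kappa|$ on $M$ (equivalently on $\Delta$), following the strategy indicated in the billiard case (Proposition~\ref{prop:psi} and the references~\cite{SV07,ChernovZhang08}) but now tracking the slowly varying factor $\ell_1$ carefully rather than absorbing it into an $O(n^{-2})$. Recall that $\psi(y)=\sum_{\ell=0}^{\sigma(y)-1}|\kappa(y,\ell)|$ records the total cell displacement accumulated during one excursion of length $\sigma(y)$ above $Y$. Since $\sigma$ has exponential tails, a displacement $\psi>n$ must come either from an anomalously long excursion (probability $O(e^{-an})\ll n^{-2}$) or from an excursion of bounded length — say $\sigma\le b\log n$ for a suitable constant $b$ — during which some single term $|\kappa(y,\ell)|$ is already of order $n/\log n$.

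The key steps, in order, are as follows. First, split $\{\psi>n\}$ according to whether $\sigma\le b\log n$ or $\sigma>b\log n$; the second piece is handled by the exponential tail of $\sigma$ and contributes $O(e^{-ab\log n})=O(n^{-ab})$, which is $\le n^{-2}$ once $b$ is chosen with $ab\ge 2$. Second, on the event $\{\psi>n,\ \sigma\le b\log n\}$, pigeonhole: there is some $\ell\in\{0,\dots,\sigma-1\}$ with $|\kappa(y,\ell)|\ge \psi/\sigma\ge n/(b\log n)$. Lifting to $\Delta$ and projecting the excursion structure down, this event is contained in a union over $\ell< b\log n$ of sets of the form $\{(y,0): |\kappa|\circ f^{\ell'}(y,0)\ge n/(b\log n)\text{ for some }\ell'\le \ell\}$, each of which has $\mu_\Delta$-measure $\le (b\log n)\,\mu_\Delta(|\kappa|\ge n/(b\log n))$ by a union bound over the iterates and $f$-invariance of $\mu_\Delta$; since $\mu_\Delta(|\kappa|\ge \cdot)=\mu(|\kappa|\ge\cdot)$, assumption~\eqref{eq:ass1} gives the bound
\[
\mu_Y(\psi>n,\ \sigma\le b\log n)\ll (\log n)^2\,\big(n/\log n\big)^{-2}\ell_1\big(n/\log n\big)=n^{-2}(\log n)^4\,\ell_1(n/\log n),
\]
one factor of $\log n$ from summing over $\ell$, one from the pigeonhole denominator squared. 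Third, observe that in the billiard application one may invoke the sharper joint estimate of~\cite[Lemma~16]{SV07} (as in Proposition~\ref{prop:psi}) to shave the excess logarithms down to the claimed $(\log n)^2$; alternatively, stating the abstract bound with $(\log n)^4$ and then noting that the integrability conclusion $\psi\in L^r(Y)$ for all $r<2$ is unaffected, since $n^{-2}(\log n)^4\ell_1(n/\log n)$ is still summable against $n^{r-1}$ for $r<2$. Finally, the $L^r$ statement follows from the tail bound by the standard resummation $\|\psi\|_r^r\ll\sum_n n^{r-1}\mu_Y(\psi>n)<\infty$ when $r<2$, using that $\ell_1$ is slowly varying.

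The main obstacle is the second step: making rigorous the claim that an excursion of bounded length contributing a large $\psi$ forces a single large value of $|\kappa|$ along a \emph{short orbit segment}, and counting correctly how the base event $\{\psi>n\}\subset Y$ pulls back through the semiconjugacies $\pi,\bar\pi$ and the tower structure without losing or double-counting measure. Concretely one must be careful that $\{\psi>n,\sigma=m\}$ sits inside $\bigcup_{\ell=0}^{m-1}\{(y,\ell):|\kappa(y,\ell)|\ge n/m\}\subset\Delta$, whose $\mu_\Delta$-measure is $\bar\sigma^{-1}$ times a sum of level-set measures, each controlled by $\mu_\Delta(|\kappa|>n/m)$ via invariance; summing over $m\le b\log n$ and over $\ell$ then yields the stated power of $\log n$. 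Everything else — the exponential-tail piece, the choice of $b$, the resummation for $L^r$ — is routine given assumption~\eqref{eq:ass1} and the exponential tails of $\sigma$.
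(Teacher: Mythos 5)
Your overall strategy (split on $\{\sigma\le k\}$ versus $\{\sigma>k\}$ with $k\asymp\log n$, pigeonhole on the short excursions, resum for the $L^r$ claim) is the same as the paper's, but your counting in the second step is lossy and the proof as written does not reach the stated exponent. You pay a factor of $b\log n$ for the union bound over the tower levels $\ell$ \emph{in addition to} the factor $(b\log n)^2$ coming from the pigeonhole threshold $n/(b\log n)$, landing at $n^{-2}(\log n)^{3}$ or $(\log n)^4$ (your own accounting is internally inconsistent between these two) instead of the claimed $n^{-2}(\log n)^2\ell_1(n/\log n)$. The correct observation, which the paper takes from~\cite[Proposition~A.1]{BM18}, is that the union over levels costs only the \emph{constant} $\bar\sigma$, not a factor of $k$: since $\mu_\Delta=(\mu_Y\times\mathrm{counting})/\bar\sigma$, one has
\[
\mu_Y\big(\exists\,\ell<\sigma:\ |\kappa(\cdot,\ell)|>x\big)\ \le\ \sum_{\ell\ge0}\mu_Y\big(\sigma>\ell,\ |\kappa(\cdot,\ell)|>x\big)\ =\ \bar\sigma\,\mu_\Delta(|\bar\kappa|>x)\ =\ \bar\sigma\,\mu(|\kappa|>x),
\]
so that $\mu_Y(\psi>n)\le\mu_Y(\sigma>k)+\bar\sigma\,\mu(|\kappa|>n/k)\ll e^{-ak}+k^2n^{-2}\ell_1(n/k)$, and $k=q\log n$ with $q>2/a$ gives exactly $(\log n)^2$. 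In other words, summing the level sets over $\ell$ reproduces the tower measure; it is not an independent union bound to be paid for.

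Your two proposed repairs do not close the gap. Invoking~\cite[Lemma~16]{SV07} is not an option here: this is the \emph{abstract} Proposition~\ref{prop:psiabs} of Section~\ref{sec:abs}, where no billiard structure is assumed (the paper explicitly remarks that such extra structure is what distinguishes Proposition~\ref{prop:psi} from Proposition~\ref{prop:psiabs}). Settling for $(\log n)^4$ proves a strictly weaker statement than the one asserted; while the $L^r$ conclusion survives, the $(\log n)^2$ exponent is quoted verbatim in the proof of Lemma~\ref{lem:keyabs} (it determines $\tL(t)=L(t)^2\tilde\ell_1(1/|t|)$ and hence $\tM_b$), so weakening it silently changes the downstream estimates rather than leaving them ``unaffected.'' The exponential-tail piece, the choice of $k$, and the resummation for $\psi\in L^r(Y)$ are all fine and match the paper.
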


\begin{proof}
A standard argument (see for example~\cite[Proposition~A.1]{BM18})
shows that 
\[
\mu_Y(\psi>n)\le \mu_Y(\sigma>k)+\bar\sigma\mu(|\kappa|>n/k),
\]
for $k,n>1$. In particular, there exists $a>0$ such that
\[
\mu_Y(\psi>n)\ll e^{-ak}+n^{-2}k^2\ell_1(n/k).
\]
Taking $k=q\log n$ for any $q>2/a$ and using that $\ell_1$ is slowly varying,
\[
\mu_Y(\psi>n)\ll n^{-2}(\log n)^2\ell_1( n/\log n).
\]

Let $\eps\in(0,2-r)$. Since $\ell_1$ is slowly varying, $\ell_1(n/\log n)\ll 
(n/\log n)^{\eps/2}\ll n^{\eps/2}$. 
Hence $\mu_Y(\psi>n)\ll n^{-(2-\eps)}$ and it follows that $\psi\in L^r$.
\end{proof}

Define 
\[
\tM_b(t,h)=|h|\tL(h)\big\{1+
L(h)|t|^2\tL(t)
+|h|^{-b|t|^2\tL(t)}L(h)^2|t|^4\tL(t)^2
\big\},
\]
where $L(t)=\log(1/|t|)$ and $\tL(t)=L(t)^2\tilde\ell_1(1/|t|)$.

\begin{lemma} \label{lem:keyabs}
The conclusions of Lemma~\ref{lem:key} and~\ref{lem:key2} hold with
$M_b$ and $L(t)$ replaced by $\tM_b$ and $\ell_2(1/|t|)$ respectively.
\end{lemma}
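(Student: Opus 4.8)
The plan is to re-run the argument of Section~\ref{sec:pfkey} on the one-sided tower essentially verbatim, making exactly two substitutions: Proposition~\ref{prop:psi} is replaced by Proposition~\ref{prop:psiabs}, so that the billiard tail bound $\mu_Y(\psi>n)=O(n^{-2})$ becomes $\mu_Y(\psi>n)\ll n^{-2}(\log n)^2\ell_1(n/\log n)$, and~\eqref{eq:SV} is replaced by assumption~\eqref{eq:ass2}. To begin with, the renewal operators $\hR$, $\hA$, $\hB$, $\hE$ and the basic estimates on them (Propositions~\ref{prop:Rt},~\ref{prop:A},~\ref{prop:B},~\ref{prop:E}), as well as the spectral setup through Proposition~\ref{prop:eig}, carry over unchanged: these use only the exponential tails of $\sigma$ together with $\psi,\kappa_\sigma\in L^r(Y)$ for all $r<2$, and the latter still holds by Proposition~\ref{prop:psiabs}. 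In particular the $C^1$ conclusions, and the identity $\partial_j\lambda_{k,0}=0$ (which uses $\int_M\kappa\,d\mu=0$), are unaffected.

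The substantive changes occur in the ``further estimates''. The proofs of Propositions~\ref{prop:furtherR},~\ref{prop:furtherA},~\ref{prop:furtherB} and Remark~\ref{rmk:furtherR} are word for word the same, but the resummations $\sum_{m\le K}\mu_Y(\psi=m)m^2(\log m)^j$ and the corresponding tail sums over $m>K$ are now evaluated with the weaker bound of Proposition~\ref{prop:psiabs} together with Karamata. By the definition $\tilde\ell_1(x)=\int_1^{1+x}u^{-1}\ell_1(u/\log u)\,du$, such a sum over $m\le K=[1/|h|]$ is $\ll (\log K)^j\,\tL(h)$ with $\tL(t)=L(t)^2\tilde\ell_1(1/|t|)$; that is, one factor of $L(h)$ present in the billiard estimate is promoted to $\tL(h)$, while the factors of $L(h)$ coming from the bound $\{\log K+(\log K)^2|h|^{-b\log|z|}(|z|-1)\}$ are unchanged. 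This gives the analogues of Propositions~\ref{prop:furtherR}--\ref{prop:furtherB} with precisely this $L(h)\mapsto\tL(h)$ change.

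For the spectral analysis, the proof of Proposition~\ref{prop:mu0} is unchanged in parts (a) and (c); in part (b), the identity $I_k(\bar\lambda_k,t)=1+\int_Y(e^{it\cdot\kappa_\sigma}-1-it\cdot\kappa_\sigma)\,d\mu_Y$ together with $|e^{it\cdot\kappa_\sigma}-1-it\cdot\kappa_\sigma|\le 2\min\{|t|^2\psi^2,|t|\psi\}$ and the resummation via Proposition~\ref{prop:psiabs} now gives $|\tau_k(\bar\lambda_k,t)-1|\ll |t|^2\tL(t)$, hence $\ll|t|^2\ell_2(1/|t|)$ by assumption~\eqref{eq:ass3}. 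Plugging $|g_k(t)-\bar\lambda_k|\ll|t|^2\tL(t)$ and $\log|g_k(t)|\ll|t|^2\tL(t)$ into the computation of $\partial_j\tau_k(g_k(t),t)$, together with the modified further estimates, reproduces exactly the expression defining $\tM_b$, so Corollary~\ref{cor:g} becomes $|g_k(t)-\bar\lambda_k|\ll |t|^2\ell_2(1/|t|)$ and $|\partial_jg_k(t+h)-\partial_jg_k(t)|\ll\tM_b(t,h)$. Lemma~\ref{lem:mu} and Corollary~\ref{cor:Rt} then go through with $M_b$ replaced by $\tM_b$ throughout, and the completion of Section~\ref{sec:pfbc} is verbatim: from $\hT=(I-\hR)^{-1}$ and $\hP=\hA\hT\hB+\hE$ one again identifies $\lambda_{k,t}=g_k(t)^{-1}$ and $\Pi_{k,t}=g_k(t)^{-1}\tilde\pi_{k,1}(t)$, so the regularity of $\lambda_{k,t}$ and $\Pi_{k,t}:\cB(\Delta)\to L^1(\Delta)$ is inherited from that of $g_k$ and $\tilde\pi_{k,1}$, proving the analogue of Lemma~\ref{lem:key}. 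For the analogue of Lemma~\ref{lem:key2} one replaces~\eqref{eq:SV} by~\eqref{eq:ass2}: $|g_0(t)-1|=|\lambda_{0,t}^{-1}|\,|1-\lambda_{0,t}|\ll|t|^2\ell_2(1/|t|)$, and this propagates to all $k$ via~\eqref{eq:g} and Proposition~\ref{prop:mu0}(c) exactly as before.

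The main obstacle is the bookkeeping in the middle two steps: one must verify that the extra $(\log)^2$ loss in Proposition~\ref{prop:psiabs} relative to the billiard bound --- which is what forces $\tilde\ell_1$ in place of plain $\ell_1$ --- flows through the resummations to exactly the factor $\tL$ appearing in $\tM_b$, and that assumption~\eqref{eq:ass3} is precisely what is needed to collapse $\tL$ into $\ell_2$ at the single place, $|g_k(t)-\bar\lambda_k|\ll|t|^2\ell_2(1/|t|)$, where the cleaner bound is wanted. Everything else is routine once these estimates are in place.
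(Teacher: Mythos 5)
Your proposal is correct and follows essentially the same route as the paper's own (sketched) proof: rerun Section~\ref{sec:pfkey} with Proposition~\ref{prop:psiabs} in place of Proposition~\ref{prop:psi} and~\eqref{eq:ass2} in place of~\eqref{eq:SV}, note the renewal-operator estimates of Section~\ref{sec:Y} are unchanged since $\psi\in L^r$ for $r<2$, promote one factor of $L(h)$ to $\tL(h)$ in the further estimates, and carry $\tL$, $\tM_b$ through Proposition~\ref{prop:mu0}(b) and Corollary~\ref{cor:g}, with~\eqref{eq:ass3} collapsing $\tL$ into $\ell_2$. In fact you supply more of the bookkeeping than the paper's proof, which stops at the modified Corollary~\ref{cor:g} and states that the result follows.
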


\begin{proof}
The modifications are elementary, but heavy on notation, so we only sketch the details.

Since $\psi\in L^r$ for all $r<2$, the arguments in Section~\ref{sec:Y} are unchanged.
The changes in the proof of Proposition~\ref{prop:furtherR} are as follows.
By resummation, Proposition~\ref{prop:psiabs} and the definition of $\tilde\ell_1$,
\begin{align*}
\sum_{m=1}^K\mu_Y(\psi=m)m^2 & \ll 
\sum_{m=1}^K\mu_Y(\psi\ge m)m\ll 
\sum_{m=1}^K m^{-1}(\log m)^2\ell_1(m/\log m)
\\ & \ll (\log K)^2\sum_{m=1}^K m^{-1}\ell_1(m/\log m)
\ll (\log K)^2\tilde\ell_1(K).
\end{align*}
Using this in~\eqref{eq:est1}, we obtain 
\[
\sum_{m=1}^K\sum_{n\le b\log m}r_{m,n} \ll
|h| L(h)^3\tilde\ell_1(1/|h|)\big\{1+ (|z|-1)|h|^{-b\log|z|}L(h)\big\}.
\]
Similarly,
\[
\sum_{m>K}\sum_{n\le b\log m} r_{m,n}
\ll |h|L(h)^3\ell_1(|h|^{-1}L(h)^{-1})\big\{1+ (|z|-1)|h|^{-b\log|z|}L(h)\big\}.
\]
Hence the estimate corresponding to Proposition~\ref{prop:furtherR} is 
\[
\|\partial_j\partial_z \hR(z,t+h)
-\partial_j\partial_z \hR(z,t)\|_{\cB_1(Y)}
  \ll  |h|L(h)^3\tilde\ell_1(1/|h|)\big\{1+
(|z|-1) |h|^{-b\log |z|}L(h)\big\}.
\]
The corresponding estimates for $\partial_j\hR$, $\partial_j\hA$ and $\partial_j\hB$ are the same but with one less factor of $L(h)$.

Parts~(a) and~(c) of Proposition~\ref{prop:mu0} are unchanged. Part~(b)
goes through with
$L$ replaced by $\tL$.
Hence, Corollary~\ref{cor:g} becomes that
\[
|g_k(t)-\bar\lambda_k|  \ll |t|^2 \tL(t), \qquad
|\partial_j g_k(t+h)-\partial_j g_k(t)|  \ll \tM_b(t,h).
\]
The result follows.
\end{proof}

\begin{cor} \label{cor:intabs}
Let $\beta\ge0$, $r\in\R$, $k=0,\dots,q-1$.
There exist $C>0$, $\delta>0$ such that
\[
\int_{B_{2\delta}(0)} |t|^\beta \tL(t)^r|\lambda_{k,t}|^n \,dt\le 
C\frac{(\tL(1/a_n))^r}{a_n^{d+\beta}}
\quad\text{for all $n\ge1$}. 
\]
\end{cor}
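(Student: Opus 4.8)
The plan is to follow the proof of Corollary~\ref{cor:int} essentially verbatim, systematically replacing the slowly varying function $L(t)=\log(1/|t|)$ appearing in the exponential bound on $\lambda_{k,t}$ by $\ell_2(1/|t|)$, and the weight $L(t)^r$ by $\tL(t)^r$. First I would invoke Lemma~\ref{lem:keyabs} (the abstract analogue of Lemma~\ref{lem:key2}), which gives $|\lambda_k-\lambda_{k,t}|\le b|t|^2\ell_2(1/|t|)$ on some ball $B_\delta(0)$; then, exactly as in the proof of Corollary~\ref{cor:int} (shrinking $\delta$ if necessary, and using that $|\lambda_{k,t}|\le1$),
\[
|\lambda_{k,t}|\le\exp\{-b|t|^2\ell_2(1/|t|)\}\quad\text{for all }t\in B_{2\delta}(0),\ k=0,\dots,q-1.
\]

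Given this pointwise bound it remains to estimate $\int_{B_{2\delta}(0)}|t|^\beta\tL(t)^r\exp\{-bn|t|^2\ell_2(1/|t|)\}\,dt$. Set $m(x)=(\log x)^2\tilde\ell_1(x)$, which is slowly varying since $\tilde\ell_1$ is; then $\tL(t)=m(1/|t|)$, $\tL(1/a_n)=m(a_n)$, and $m^r$ is slowly varying for every $r\in\R$. Since $a_n$ is chosen so that $na_n^{-2}\ell_2(a_n)\sim1$, this integral is precisely of the type treated in \cite[Lemma~2.3]{MTsub}, applied with the slowly varying data $\ell_2$ (inside the exponential) and $m^r$ (the weight), and that lemma delivers the bound $\ll m(a_n)^r a_n^{-(d+\beta)}=\tL(1/a_n)^r a_n^{-(d+\beta)}$. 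For completeness I would record the mechanism: pass to polar coordinates and substitute $t=u/a_n$, so that the exponent becomes $b u^2\,\tfrac{n}{a_n^2}\ell_2(a_n/u)$; using $\tfrac{n}{a_n^2}\ell_2(a_n)\to1$ together with the uniform convergence theorem for slowly varying functions and Potter's bounds to control $\ell_2(a_n/u)/\ell_2(a_n)$ and $m(a_n/u)/m(a_n)$ uniformly for $1\le u\le 2\delta a_n$, the contribution of $u=O(1)$ produces the main term $\,c\,m(a_n)^r a_n^{-(d+\beta)}\int_0^\infty u^{d+\beta-1}e^{-bu^2}\,du$, while the region where $|t|$ is bounded away from $0$ contributes only $O(e^{-cn})$, which is negligible against any negative power of $a_n$ times a slowly varying factor. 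Finitely many small values of $n$ are absorbed into $C$.

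The only point requiring genuine care — as already in Corollary~\ref{cor:int} — is the passage from $|\lambda_k-\lambda_{k,t}|\le b|t|^2\ell_2(1/|t|)$ and $|\lambda_{k,t}|\le1$ to the exponential bound on $|\lambda_{k,t}|$ displayed above: one uses, for $k=0$, assumption~\eqref{eq:ass2}, and, for $k\neq0$, the identity $\lambda_{k,t}=g_k(t)^{-1}$ with $|g_k(t)|\ge1$ established in Section~\ref{sec:pfkey}, exactly as in the billiard case. I note that assumption~\eqref{eq:ass3} is \emph{not} used in this corollary; it enters only elsewhere in the proof of Theorem~\ref{thm:abs}, where $\tL$ must be compared with $\ell_2$ (and where bounding $\tL(t)$ crudely by $C\ell_2(1/|t|)$ would lose information). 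Beyond the eigenvalue bound, the proof is pure bookkeeping of slowly varying factors, with no new analytic input.
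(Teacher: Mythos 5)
Your proposal follows the paper's own (very brief) argument for Corollary~\ref{cor:intabs}: invoke the abstract analogue of Lemma~\ref{lem:key2} from Lemma~\ref{lem:keyabs} to get $|\lambda_{k,t}|\le\exp\{-b|t|^2\ell_2(1/|t|)\}$ exactly as in Corollary~\ref{cor:int}, and then apply the integral estimate of \cite[Lemma~2.3]{MTsub} with $a_n$ now defined through $\ell_2$ rather than $\log$. Your additional unpacking of the \cite{MTsub} mechanism (polar coordinates, the substitution $t=u/a_n$, slow variation of $\tL$) and the observation that~\eqref{eq:ass3} is not needed here are consistent with, and merely more explicit than, the paper's proof.
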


\begin{proof} Following the proof of Corollary~\ref{cor:int}, we obtain $|\lambda_{k,t}|\le \exp\{-b|t|^2\ell_2(t)\}$.  Now use that $a_n$ is defined using $\ell_2$ instead of $L$.
\end{proof}

\begin{pfof}{Theorem~\ref{thm:abs}}
The arguments are identical to those in Section~\ref{sec:lldpf} up to slowly varying factors.  Various simplifications no longer hold as the slowly varying functions $\ell_1$, $\ell_2$, $\tilde\ell_1$ and $\log$ are less well related, so the exact formulas are rather complicated and hence are omitted.
\end{pfof}

\paragraph{Acknowledgements}
DT was partially supported by EPSRC grant EP/S019286/1.

\end{document}